\documentclass[12pt,reqno]{amsart}
\usepackage[margin=1in]{geometry}
\usepackage{amsmath,amssymb,amsthm,graphicx,amsxtra, setspace}
\usepackage[utf8]{inputenc}
\usepackage{mathrsfs}
\usepackage{hyperref}
\usepackage{upgreek}
\usepackage{mathtools}
\allowdisplaybreaks

\usepackage[pagewise]{lineno}

\newtheorem{theorem}{Theorem}[section]
\newtheorem{lemma}[theorem]{Lemma}
\newtheorem{proposition}[theorem]{Proposition}
\newtheorem{assumption}[theorem]{Assumption}
\newtheorem{corollary}[theorem]{Corollary}
\newtheorem{definition}[theorem]{Definition}

\newtheorem{remark}[theorem]{Remark}

\let\originalleft\left
\let\originalright\right
\renewcommand{\left}{\mathopen{}\mathclose\bgroup\originalleft}
\renewcommand{\right}{\aftergroup\egroup\originalright}


\renewcommand{\d}{\/\mathrm{d}\/}

\def\w{\textbf{W}^{\varepsilon}_{{\theta}^{\varepsilon}}}

\def\e{\varepsilon}

\def\S{\mathrm{S}}

\def\L{\mathbb{L}}
\def\A{\mathrm{A}}

\def\C{\mathrm{C}}
\def\f{\mathbf{f}}

\def\B{\mathrm{B}}
\def\D{\mathrm{D}}
\def\y{\mathbf{y}}

\def\E{\mathbb{E}}

\def\x{\mathbf{x}}

\def\p{\mathbf{p}}

\def\z{\mathbf{z}}
\def\v{\mathbf{v}}
\def\V{\mathbb{v}}
\def\w{\mathbf{w}}

\def\G{\mathrm{G}}
\def\Q{\mathrm{Q}}

\def\N{\mathbb{N}}

\def\V{\mathbb{V}}
\def\wi{\widetilde}
\def\Q{\mathrm{Q}}
\def\u{\mathrm{U}}
\def\P{\mathrm{P}}
\def\u{\mathbf{u}}
\def\H{\mathbb{H}}
\def\n{\mathbf{n}}

\newcommand{\R}{\mathbb{R}}

\renewcommand{\d}{\/\mathrm{d}\/}

\newcommand{\Addresses}{{
		\footnote{
			\noindent \textsuperscript{1,2}Department of Mathematics, Indian Institute of Technology Roorkee-IIT Roorkee,
			Haridwar Highway, Roorkee, Uttarakhand 247667, INDIA.\par\nopagebreak
			\noindent  \textit{e-mail:} \texttt{Manil T. Mohan: maniltmohan@ma.iitr.ac.in, maniltmohan@gmail.com.}
			
			\textit{e-mail:} \texttt{Kush Kinra: kkinra@ma.iitr.ac.in.}
			
			\noindent \textsuperscript{*}Corresponding author.
			
			\textit{Key words:} Stochastic convective Brinkman-Forchheimer equations, cylindrical Wiener process, random dynamical system, random attractors, flattening property,  upper semicontinuity.
			
			Mathematics Subject Classification (2020): Primary 35B41, 35Q35; Secondary 37L55, 37N10, 35R60.

}}}

\begin{document}
	
	\title[Random attractors for SCBF equations]{Existence and upper semicontinuity of random attractors for the 2D stochastic convective Brinkman-Forchheimer equations in bounded domains
		\Addresses}
	
	\author[K. Kinra and M. T. Mohan]
	{Kush Kinra\textsuperscript{1} and Manil T. Mohan\textsuperscript{2*}}

	\maketitle
	
	\begin{abstract}
		In this work, we discuss the large time behavior of the solutions of the two dimensional stochastic convective Brinkman-Forchheimer (SCBF) equations in bounded domains. Under the functional setting $\V\hookrightarrow\H\hookrightarrow\V'$, where $\H$ and $\V$ are appropriate separable Hilbert spaces and the embedding $\V\hookrightarrow\H$ is compact, we establish the existence of random attractors in $\H$ for the stochastic flow generated by the 2D SCBF equations perturbed by small additive noise. We prove the upper semicontinuity of the random attractors for the 2D SCBF equations in $\H$, when the coefficient of random term approaches zero. Moreover, we obtain the existence of random attractors in a more regular space $\V$, using the pullback flattening property. The existence of random attractors ensures the existence of invariant compact random set and hence we show the existence of an invariant measure for the 2D SCBF equations. 
	\end{abstract}

	\section{Introduction} \label{sec1}\setcounter{equation}{0}
	The study of the asymptotic behavior of dynamical systems is one of the most important areas of mathematical physics. A comprehensive investigation on the  attractors for the deterministic infinite dimensional dynamical systems has been carried out in  \cite{ICh,JCR,R.Temam}, etc. However, their corresponding stochastic versions also have great importance, therefore the analysis of the infinite dimensional random dynamical system (RDS) is also a predominant branch of stochastic partial differential equations (SPDEs). A detailed study as well as an elaborate literature is available in \cite{Arnold} on the generation of random dynamical systems for  stochastic ordinary differential equations and SPDEs. The existence of random attractors for a large class of SPDEs like stochastic reaction-diffusion equations, the stochastic $p$-Laplace equation and stochastic porous media equations, etc  driven by general additive noise is established in \cite{BGWL}. As far as the stochastic Navier-Stokes equations (SNSE) are concerned, the notion of random attractors was introduced in \cite{BCF,CDF,CF}, etc and the authors established the existence of random attractors for the 2D SNSE in bounded domains. Since the generation of RDS and random attractors for the Navier-Stokes equations (NSE) is a very active  area of research, we restrict ourselves to those works which are relevant to the results of this paper.  The existence of random attractors for several physically relevant stochastic models is proved in the works \cite{BGT, BLL, BLW, BCLLLR, Crauel1, KM, LG, FLYB, You} etc, and the references therein. 
	
	In the functional setting $\V\hookrightarrow\H\hookrightarrow\V'$, where $\H$ and $\V$ are appropriate separable Hilbert spaces (see section \ref{sec2} for details on the function spaces),  since we do not have any method to find an absorbing set in a more regular space than $\V$, we are not able to prove the existence of random attractors in $\V$ using compactness arguments. To resolve this problem, the authors in \cite{KL} introduced a method to find the existence of random attractors using the pullback flattening property and this method  became successful to prove the existence of random attractors for the 2D SNSE as well as stochastic reaction-diffusion equations in $\V$. The authors in the works  \cite{FY, KLR,  Zhao, Zhao1}, etc obtained the existence of random attractors  in $\V$ for different stochastic models appearing in fluid mechanics by  verifying the pullback flattening property. The authour in \cite{You} proved the existence of a random attractors for the stochastic 3D damped NSE in bounded domains with additive noise by verifying the pullback flattening property. It appears to us that the results obtained in the work \cite{You} may not hold true in bounded domains due to the technical difficulties discussed in the works \cite{KZ,MTM}, etc (commutativity of the projection operator with $-\Delta$ and the nonzero boundary condition of the projected nonlinear damping term). 
	
	Furthermore, in the study of random attractors, one more property of the random attractors was introduced in \cite{CLR}, which is the upper semicontinuity of random  attractors. Roughly speaking, if $\mathcal{A}$ is a global attractor for the deterministic system and $\mathcal{A}_\varepsilon$ is a random attractor for the corresponding stochastic system perturbed by a small noise, we say that these attractors have the property of upper semicontinuity if $$\lim\limits_{\varepsilon\to 0}d(\mathcal{A}_\varepsilon, \mathcal{A})=0,$$ where $d$ is the Hausdorff semidistance given by $d(A, B)=\sup\limits_{y\in A}\inf\limits_{z\in B}\rho(y,z),$ for any $A, B\subset X,$ on a Polish space $(X,\rho)$. After introducing the concept of upper semicontinuity, the authors in \cite{CLR} proved the upper semicontinuity of random attractors for the 2D SNSE and stochastic reaction-diffusion equations. The existence and the upper semicontinuity of a pullback attractor for the stochastic retarded 2D NSE on a bounded domain are obtained in \cite{XJXD}. The author in  \cite{BW} established the upper semicontinuity of random attractors for non-compact random dynamical systems and  he applied  this result to a stochastic reaction-diffusion equation on the whole space. The upper semicontinuity of random attractors for the stochastic $p$-Laplacian equations on unbounded domains is obtained in \cite{JYH}. 
	
	Our main aim of this article is to study the asymptotic behavior of solutions of the stochastic version of the following system perturbed by small additive noise. Let $\mathcal{O} \subset \R^2$ be a bounded domain with smooth boundary $\partial\mathcal{O}$ and consider the following convective Brinkman-Forchheimer (CBF) equations in $\mathcal{O}$ with homogeneous Dirichlet boundary conditions:
	\begin{equation}\label{1}
	\left\{
	\begin{aligned}
	\frac{\partial \u}{\partial t}-\mu \Delta\u+(\u\cdot\nabla)\u+\alpha\u+\beta|\u|^{r-1}\u+\nabla \p&=\mathbf{f}, \ \text{ in } \ \mathcal{O}\times(0,\infty), \\ \nabla\cdot\u&=0, \ \text{ in } \ \mathcal{O}\times(0,\infty), \\
	\u&=\mathbf{0}\ \ \text{ on } \ \partial\mathcal{O}\times(0,\infty), \\
	\u(0)&=\u_0 \ \text{ in } \ \mathcal{O},\\
	\int_{\mathcal{O}}p(x,t)\d x&=0, \ \text{ in } \ (0,\infty).
	\end{aligned}
	\right.
	\end{equation}
	The convective Brinkman-Forchheimer equations \eqref{1} describe the motion of incompressible fluid flows in a saturated porous medium.  Here $\u(t , x) \in \R^2$ represents the velocity field at time $t$ and position $x$, $p(t,x)\in\R$ denotes the pressure field, $\f(t,x)\in\R^2$ is an external forcing. The final condition in \eqref{1} is imposed for the uniqueness of the pressure $p$. The constant $\mu$ represents the positive Brinkman coefficient (effective viscosity), the positive constants $\alpha$ and $\beta$ represent the Darcy (permeability of porous medium) and Forchheimer (proportional to the porosity of the material) coefficients, respectively. The absorption exponent $r\in[1,\infty)$ and  $r=3$ is known as the critical exponent. For $\alpha=\beta=0$, we obtain the classical 2D NSE. Thus, one can consider the system \eqref{1} as a damped Navier-Stokes equations with the linear and nonlinear damping terms $\alpha\u$ and $\beta|\u|^{r-1}\u$, respectively. 
	
	The global solvability of the system \eqref{1} in two and three dimensional bounded domains is available in \cite{SNA,KZ,MTM0}, etc. The existence of global attractors for the 2D deterministic CBF equations in $\H$ and $\V$ on Poincar\'e domains (may be unbounded) is proved in \cite{MTM1,MTM2}, respectively. For a sample  literature on the  attractors for two and three dimensional CBF equations and damped Navier-Stokes equations, the interested readers are referred to see \cite{KZ,KWH,AAI,AIKP,MTM1} etc. On the stochastic counterpart, the existence of a unique pathwise strong solution to the two and three dimensional stochastic convective Brinkman-Forchheimer (SCBF) equations  (see \eqref{S-CBF} below)  is obtained in \cite{MTM}. The authors in \cite{KM} proved the existence of random attractors in $\H$ for the 2D SCBF equations perturbed by additive rough noise on Poincar\'e domains. The existence of random dynamical systems and random attractors in $\H$ for a large class of locally monotone SPDEs  perturbed by additive L\'evy noise is obtained in \cite{GLS}. In this work, we establish the existence of a random attractor in $\H$ for the 2D SCBF equations for $r\in[1,\infty)$ in bounded domains. Furthermore, we consider the stability of global attractor and prove that the random attractors for the 2D SCBF system with small additive noise will converge to the global attractor of the unperturbed 2D CBF system, when the parameter of the perturbation  tends to zero. It is remarked that under smoothness assumptions on the noise, the upper semicontinuity property in $\H$ holds true on Poincar\'e domains also. We also point out that for $r>3,$ our model does not fall into the SPDEs explained in \cite{GLS} and for $r\in[1,\infty)$, we obtain the random attractors in $\V$.

	The rest of the paper  is organized as follows. In the next section, we define the  functional spaces which are needed for the global solvability of the system \eqref{1}. We introduce the linear and nonlinear operators also in the same section along with their properties. Furthermore, we provide the definitions and results on random dynamical systems and  random attractors. The 2D SCBF equations is also considered in the same section  and we discuss about its global solvability results. The metric dynamical system as well as the  random dynamical system for our model is constructed in the section \ref{sec3}. The existence of random attractors in $\H$ for the 2D SCBF equations is proved in section \ref{sec4} by establishing absorbing balls in $\H$ and $\V$, and then using compactness arguments provided in Theorem 3.11, \cite{CF} (Theorems \ref{H_ab}, \ref{V_ab}, \ref{Main_Theoem_H}). In section \ref{sec5}, we establish the upper semicontinuity of the random attractors in $\H$ (Theorem \ref{USC}). We also remark that such results holds true even in Poincar\'e domains (Remark \ref{rem5.2}). The existence of random attractors in $\V$ for the 2D SCBF equations by using pullback flattening property is proved in section \ref{sec6} (Theorems \ref{flattning} and \ref{Main_Theoem_V}). Note that the existence of random attractors ensures the existence of invariant compact random set and hence we show the existence of an invariant measure for the 2D SCBF equations  in section \ref{sec7} (Theorem \ref{thm7.3}) by invoking Corollary 4.4, \cite{CF}.

	\section{Mathematical Formulation and Preliminaries}\label{sec2}\setcounter{equation}{0}
	In this section, we present the necessary function spaces needed to obtain the existence and uniqueness of solutions as well as asymptotic behavior of solutions of the system \eqref{1}.  In our analysis, the parameter $\alpha$ does not play a major role, therefore we set $\alpha$ to be zero in \eqref{1} for the rest of our work.
	\subsection{Function spaces} We denote $\C_0^{\infty}(\mathcal{O};\R^2)$ for the space of all infinitely differentiable functions  ($\R^2$-valued) with compact support in $\mathcal{O}\subset\R^2$.  Let us define 
	\begin{align*} 
	\mathcal{V}&:=\{\u\in\C_0^{\infty}(\mathcal{O},\R^2):\nabla\cdot\u=0\},\\
	\mathbb{H}&:=\text{the closure of }\ \mathcal{V} \ \text{ in the Lebesgue space } \L^2(\mathcal{O})=\mathrm{L}^2(\mathcal{O};\R^2),\\
	\mathbb{V}&:=\text{the closure of }\ \mathcal{V} \ \text{ in the Sobolev space } \H_0^1(\mathcal{O})=\mathrm{H}_0^1(\mathcal{O};\R^2),\\
	\widetilde{\L}^{p}&:=\text{the closure of }\ \mathcal{V} \ \text{ in the Lebesgue space } \L^p(\mathcal{O})=\mathrm{L}^p(\mathcal{O};\R^2),
	\end{align*}
	for $p\in(2,\infty)$. Then under some smoothness assumptions on the boundary, we characterize the spaces $\H$, $\V$ and $\widetilde{\L}^p$ as 
	$
	\H=\{\u\in\L^2(\mathcal{O}):\nabla\cdot\u=0,\u\cdot\mathbf{n}\big|_{\partial\mathcal{O}}=0\}$,  with the norm  $\|\u\|_{\H}^2:=\int_{\mathcal{O}}|\u(x)|^2\d x,
	$
	where $\mathbf{n}$ is the outward normal to $\partial\mathcal{O}$, and $\u\cdot\n\big|_{\partial\mathcal{O}}$ should be understood in the sense of trace in $\H^{-1/2}(\partial\mathcal{O})$ (cf. Theorem 1.2, Chapter 1, \cite{Temam}), 
	$
	\V=\{\u\in\H_0^1(\mathcal{O}):\nabla\cdot\u=0\},$  with the norm $ \|\u\|_{\V}^2:=\int_{\mathcal{O}}|\nabla\u(x)|^2\d x,
	$ and $\widetilde{\L}^p=\{\u\in\L^p(\mathcal{O}):\nabla\cdot\u=0, \u\cdot\mathbf{n}\big|_{\partial\mathcal{O}}=0\},$ with the norm $\|\u\|_{\widetilde{\L}^p}^p=\int_{\mathcal{O}}|\u(x)|^p\d x$, respectively.
	Let $(\cdot,\cdot)$ denote the inner product in the Hilbert space $\H$ and $\langle \cdot,\cdot\rangle $ represent the induced duality between the spaces $\V$  and its dual $\V'$ as well as $\widetilde{\L}^p$ and its dual $\widetilde{\L}^{p'}$, where $\frac{1}{p}+\frac{1}{p'}=1$. Note that $\H$ can be identified with its dual $\H'$. Moreover, we have the Gelfand triple $\V\hookrightarrow\H \cong\H'\hookrightarrow\V'$ with dense and continuous embedding, and the embedding $\V\hookrightarrow\H$ is compact.
	\subsection{Linear operator}\label{Operator}
	Let $\mathrm{P}_{\H} : \L^p(\mathcal{O}) \to\H$ denote the \emph{Helmholtz-Hodge  projection} (\cite{DFHM}). For $p=2$, $\mathrm{P}_{\H}$ becomes an orthogonal projection and for $2<p<\infty$, it is a bounded linear operator. Let us define
	\begin{equation*}
	\left\{
	\begin{aligned}
	\A\u:&=-\mathrm{P}_{\H}\Delta\u,\;\u\in\D(\A),\\ \D(\A):&=\V\cap\H^{2}(\mathcal{O}).
	\end{aligned}
	\right.
	\end{equation*}
	It should be noted  that the operator $\A$ is a non-negative self-adjoint operator in $\H$ with $\V=\D(\A^{1/2})$ and \begin{align}\label{2.7a}\langle \A\u,\u\rangle =\|\u\|_{\V}^2,\ \textrm{ for all }\ \u\in\V, \ \text{ so that }\ \|\A\u\|_{\V'}\leq \|\u\|_{\V}.\end{align} 
	For the bounded domain $\mathcal{O}$, the operator $\A$ is invertible and its inverse $\A^{-1}$ is bounded, self-adjoint and compact in $\H$. Thus, using spectral theorem, the spectrum of $\A$ consists of an infinite sequence $0< \lambda_1\leq \lambda_2\leq\ldots\leq \lambda_k\leq \ldots,$ with $\lambda_k\to\infty$ as $k\to\infty$ of eigenvalues. Moreover, there exists an orthogonal basis $\{e_k\}_{k=1}^{\infty} $ of $\H$ consisting of eigenvectors of $\A$ such that $\A e_k =\lambda_ke_k$,  for all $ k\in\mathbb{N}$.  We know that any $\u\in\H$ can be expressed as $\u=\sum_{k=1}^{\infty}(\u,e_k)e_k$ and hence $\A\u=\sum_{k=1}^{\infty}\lambda_k(\u,e_k)e_k$, for all $\u\in\D(\A)$. Thus, we deuce that
	\begin{align}\label{poin}
	\|\nabla\u\|_{\mathbb{H}}^2=\langle \A\u,\u\rangle =\sum_{k=1}^{\infty}\lambda_k|(\u,e_k)|^2\geq \lambda_1\sum_{k=1}^{\infty}|(\u,e_k)|^2=\lambda_1\|\u\|_{\mathbb{H}}^2,
	\end{align}
	for all $\u\in\V$ and  
	\begin{align}\label{poin_1}
	\|\A\u\|^2_{\H}=(\A\u, \A\u)&=\sum_{k=1}^{\infty}\lambda_k^2|( \u,e_k)|^2\geq\lambda_1\sum_{k=1}^{\infty}\lambda_k|(\u,e_k)|^2 =\lambda_1\|\nabla\u\|_{\mathbb{H}}^2,
	\end{align}
	for all $\u\in\D(\A)$. 
	\subsection{Bilinear operator}
	Let us define the \emph{trilinear form} $b(\cdot,\cdot,\cdot):\V\times\V\times\V\to\R$ by $$b(\u,\v,\w)=\int_{\mathcal{O}}(\u(x)\cdot\nabla)\v(x)\cdot\w(x)\d x=\sum_{i,j=1}^2\int_{\mathcal{O}}\u_i(x)\frac{\partial \v_j(x)}{\partial x_i}\w_j(x)\d x.$$ If $\u, \v$ are such that the linear map $b(\u, \v, \cdot) $ is continuous on $\V$, the corresponding element of $\V'$ is denoted by $\B(\u, \v)$. We also denote  $\B(\u) = \B(\u, \u)=\mathrm{P}_{\H}(\u\cdot\nabla)\u$.
	An integration by parts yields 
	\begin{equation}\label{b0}
	\left\{
	\begin{aligned}
	b(\u,\v,\v) &= 0,\text{ for all }\u,\v \in\V,\\
	b(\u,\v,\w) &=  -b(\u,\w,\v),\text{ for all }\u,\v,\w\in \V.
	\end{aligned}
	\right.\end{equation}
	The following well-known inequality is due to Ladyzhenskaya (Lemma 1, Chapter I, \cite{OAL}):
	\begin{align}\label{lady}
	\|\v\|_{\L^{4}(\mathcal{O}) } \leq 2^{1/4} \|\v\|^{1/2}_{\L^{2}(\mathcal{O}) } \|\nabla \v\|^{1/2}_{\L^{2}(\mathcal{O}) }, \ \ \ \v\in \H^{1,2}_{0} (\mathcal{O}).
	\end{align}
	Furthermore, an application of the Gagliardo-Nirenberg inequality (Theorem 2.2, \cite{Nirenberg}) yields  the following generalization of \eqref{lady}:
	\begin{align}\label{Gen_lady}
	\|\v\|_{\L^{p}(\mathcal{O}) } \leq C \|\v\|^{1 - \frac{2}{p}}_{\L^{2}(\mathcal{O}) } \|\nabla \v\|^{\frac{2}{p}}_{\L^{2}(\mathcal{O}) }, \ \ \ \v\in \H^{1,2}_{0} (\mathcal{O}),
	\end{align}
	for all $p\in[2,\infty)$. Thus, it is immediate that $\V\subset\wi\L^{r+1}$, for all $r\in[1,\infty)$. Using Ladyzhenskaya's inequality, it is immediate that $\B$ maps $\wi\L^4$ (and so $\V$) into $\V'$ and
	\begin{align*}
	\left|\left<\B(\u,\u),\v\right>\right|=\left|b(\u,\v,\u)\right|\leq \|\u\|_{\L^4}^2\|\nabla\v\|_{\H}\leq \sqrt{2}\|\u\|_{\H}\|\nabla\u\|_{\H}\|\v\|_{\V},
	\end{align*}
	for all $\v\in\V$, so that 
	\begin{align}\label{2.9a}
	\|\B(\u)\|_{\V'}\leq \sqrt{2}\|\u\|_{\H}\|\nabla\u\|_{\H}\leq \frac{\sqrt{2}}{\lambda_1^{1/4}}\|\u\|_{\V}^2,\ \text{ for all }\ \u\in\V,
	\end{align}
	using the Poincar\'e inequality.
	Also, we need the following estimate on the trilinear form $b$ in the sequel (see Chapter 2, section 2.3 \cite{Temam1}),
	\begin{align}
	|b(\u,\v,\w)|\leq C\|\u\|^{1/2}_{\H}\|\u\|^{1/2}_{\V}\|\v\|^{1/2}_{\V}\|\A\v\|^{1/2}_{\H}\|\w\|_{\H},\ \text{ for all }\ \u\in \V, \v\in \D(\A), \w\in \H.\label{b1}
	\end{align}
	\subsection{Nonlinear operator}
	Let us now consider the operator $\mathcal{C}(\u):=\P_{\H}(|\u|^{r-1}\u)$. It is immediate that $\langle\mathcal{C}(\u),\u\rangle =\|\u\|_{\widetilde{\L}^{r+1}}^{r+1}$ and the map $\mathcal{C}(\cdot):\widetilde{\L}^{r+1}\to\widetilde{\L}^{\frac{r+1}{r}}$. Note that $\mathcal{C}'(\u)\v=r\mathrm{P}_{\H}(|\u|^{r-1}\v)$, for all $\u,\v\in\wi\L^{r+1}$, where $\mathcal{C}'(\cdot)$ denotes the Gateaux derivative of $\mathcal{C}(\cdot)$.  Also, for any $r\in [1, \infty)$ and $\u_1, \u_2 \in \V$, we have (see subsection 2.4, \cite{MTM}),
	\begin{align}\label{MO_c}
	\langle\mathcal{C}(\u_1)-\mathcal{C}(\u_2),\u_1-\u_2\rangle \geq 0.
	\end{align}
	
	\subsection{Notations and preliminaries} 
	In this subsection, we  introduce the basic notions and preliminaries on random dynamical systems. Let $(\Omega,\mathscr{F},\{\mathscr{F}_t\}_{t\geq 0},\mathbb{P})$ be a given filtered probability space. 
	
	\begin{definition}
		Suppose that $\mathrm{X}$ is a Polish space, that is, a metrizable complete separable topological space, $\mathcal{B}$ is its Borel $\sigma$-field and $\Im:=(\Omega, \mathscr{F}, \mathbb{P}, \theta)$ is a metric DS. A map $\varphi:\mathbb{R}^{+}\times\Omega\times \mathrm{X}\ni (t, \omega, x)\mapsto \varphi(t, \omega)x \in \mathrm{X} $ is called a \emph{measurable random dynamical system} (RDS) (on $\mathrm{X}$ over $\Im$), if and only if 
		\begin{itemize}
			\item[(i)] $\varphi$ is $(\mathcal{B}(\mathbb{R}^{+}) \otimes \mathscr{F} \otimes \mathcal{B}, \mathcal{B})$-measurable; 
			\item[(ii)] $\varphi$ is a $\theta$-cocycle, that is, $$ \varphi(t + s, \omega, x) = \varphi\big(t, \theta_s \omega, \varphi(s, \omega, x)\big);$$
			\item[(iii)] $\varphi(t, \omega):\mathrm{X}\to \mathrm{X}$ is continuous;
		\end{itemize} 
		The map $\varphi$ is said to be \emph{continuous} if and only if for all $(t, \omega) \in \R^+ \times \Omega, \ \varphi(t, \omega, \cdot):\mathrm{X} \to \mathrm{X}$ is continuous. 
	\end{definition}
	Now we recall the notion of an absorbing random set from the works \cite{BCF,Crauel}, etc. 	Let $\mathfrak{D}$ be the class of closed and bounded random sets on $\mathrm{X}$.
	
	\begin{definition}
		A random set $\mathcal{A}(\omega)$ is said to \emph{absorb} another random set $B(\omega)$ if and only if for all $\omega \in \Omega$, there exists a time $t_B(\omega)\geq 0$ such that  $$ \varphi(t, \theta_{-t}\omega, B(\theta_{-t}\omega))\subset \mathcal{A}(\omega), \  \text{ for all } \ t\geq t_B(\omega). $$ 
		
		The smallest time $t_B(\omega)\geq 0 $ for which above inclusion holds is called the \emph{absorbtion time} of $B(\omega)$ by $\mathcal{A}(\omega).$ 
		
		A random set $\mathcal{A}(\omega)$ is called $\mathfrak{D}$-absorbing if and only if $\mathcal{A}(\omega)$ absorbs every $D(\omega)\in \mathfrak{D}$.
	\end{definition}
	\begin{definition}\label{RA}
		A random set $\mathcal{A}(\omega)$ is a \emph{random $\mathfrak{D}$-attractor} if and only if
		\begin{itemize}
			\item[(i)]$\mathcal{A}$ is a compact random set,
			\item[(ii)]$\mathcal{A}$ is $\varphi$-invariant, that is, $\mathbb{P}$-a.s.,
			
			\begin{center}
				$ \varphi(t,\omega)\mathcal{A}(\omega) = \mathcal{A}(\theta_t\omega),$
			\end{center}
			
			\item[(iii)]$\mathcal{A}$ is $\mathfrak{D}$-attracting, in the sense that, for all $D(\omega)\in \mathfrak{D}$ it holds that $$ \lim_{t\to\infty} d\big(\varphi(t,\theta_{-t}\omega)D(\theta_{-t}\omega),\mathcal{A}(\omega)\big)=0,$$	where $d$ is the Hausdorff semidistance.
		\end{itemize}
	\end{definition}
	\begin{definition}\label{Skew}\emph{(Remark 1.1.8, \cite{Arnold})}
		Given an RDS $\varphi$. Then the mapping $$(\omega,x)\mapsto\big(\theta_t(\omega), \varphi(t,\omega)x\big) =: \Theta_t(\omega,x), \quad\quad t\in \R^+,$$ is a measurable DS on $(\Omega\times X, \mathscr{F}\otimes\mathcal{B})$ which is called the skew product of metric DS $(\Omega, \mathscr{F}, \mathbb{P}, (\theta_t)_{t\in\R})$ and the cocycle $\varphi(t, \omega)$ on $\mathrm{X}$. Conversely, every such measurable skew product DS $\Theta$ defines a cocycle $\varphi$ on its x component, thus a measurable RDS.
	\end{definition}
	\begin{definition}\emph{(See \cite{BL})}
		Let $\varphi$ be given RDS over a metric DS $(\Omega, \mathscr{F}, \mathbb{P}, (\theta_t)_{t\in\R})$. A probability measure $\upeta$ on $(\Omega\times X, \mathscr{F}\otimes\mathcal{B})$ is called an invariant mearure for $\varphi$ if and only if
		\begin{itemize}
			\item [(i)] $\Theta_t$ preserves $\upeta$ (that is, $\Theta_t(\upeta)=\upeta$) for each $t\in \R^+$;
			\item [(ii)] the first marginal of $\upeta$ is $\mathbb{P}$, that is, $\pi_{\Omega}(\upeta)=\mathbb{P},$ where $\pi_{\Omega}:\Omega\times\mathrm{X}\ni (\omega,x)\mapsto\omega\in \Omega.$
		\end{itemize}	
	\end{definition}
	\begin{definition}[\cite{KL}]\label{flat}
		A RDS $\theta$-cocycle $\varphi$ on a Banach space $\mathrm{X}$ is said to be \emph{pullback flattening} if for every random $\mathfrak{D}$-bounded set $\mathcal{B}=\{B(\omega), \omega\in \Omega\}$ in $\mathrm{X}$, for $\delta>0$ and $\omega\in \Omega$, there exists a $T_0(\mathcal{B},\delta,\omega)>0$ and a finite-dimensional subspace $\mathrm{X}_\delta$ of $\mathrm{X}$ such that
		\begin{itemize}
			\item [(i)] $\bigcup_{t\geq T_0} \mathrm{P}_\delta\varphi(t, \theta_{-t}, B(\theta_{-t}\omega))$ is bounded, and
			\item [(ii)] $\|(I-\mathrm{P}_\delta)\big(\bigcup_{t\geq T_0} \varphi(t, \theta_{-t}, B(\theta_{-t}\omega))\big)\|_{\mathrm{X}}<\delta$,
		\end{itemize}
		where $\mathrm{P}_\delta:\mathrm{X}\to \mathrm{X}_\delta$ is a bounded projection and \emph{(ii)} is understood in the sense that $\|(I-\mathrm{P}_\delta)\varphi(t, \theta_{-t}\omega, x_0)\|_{\mathrm{X}}<\delta$, for all $x_0\in B(\theta_{-t}\omega)$ and $t\geq T_0.$ 
	\end{definition}
	\begin{theorem}[\cite{KL}]\label{flat_T}
		Suppose that an RDS $\theta$-cocycle $\varphi$ is pullback flattening and has a random bounded absorbing set. Then it has a unique random attractor. 
	\end{theorem}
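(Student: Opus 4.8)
The plan is to reduce the statement to the standard existence theory for random attractors by showing that the flattening property implies pullback asymptotic compactness; once this is in hand, the existence and uniqueness of the random attractor follow from a Crauel--Flandoli type theorem (e.g.\ Theorem 3.11 of \cite{CF}), which guarantees that an RDS with a random bounded absorbing set that is asymptotically compact possesses a unique random attractor, given by the $\omega$-limit set of the absorbing set. Thus the only genuinely new work is the implication ``flattening $\Rightarrow$ asymptotic compactness.''

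To establish asymptotic compactness, fix a random $\mathfrak{D}$-bounded set $\mathcal{B}=\{B(\omega)\}$, fix $\omega\in\Omega$, and consider a sequence $t_k\to\infty$ with $x_k\in B(\theta_{-t_k}\omega)$; I must extract a convergent subsequence from $u_k:=\varphi(t_k,\theta_{-t_k}\omega,x_k)$. For each $m\in\N$ I would apply the flattening property with $\delta=1/m$ to obtain a finite-dimensional subspace $\mathrm{X}_m$ with bounded projection $\mathrm{P}_m$ and a time $T_m=T_0(\mathcal{B},1/m,\omega)$ such that, for all $t_k\geq T_m$, the vectors $\mathrm{P}_m u_k$ stay in a bounded subset of the finite-dimensional space $\mathrm{X}_m$ while $\|(I-\mathrm{P}_m)u_k\|_{\mathrm{X}}<1/m$. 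Since bounded subsets of finite-dimensional spaces are precompact, a diagonal extraction produces a subsequence (not relabelled) along which $\{\mathrm{P}_m u_k\}_k$ converges for every $m$.

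Along this subsequence the estimate
$$\|u_k-u_l\|_{\mathrm{X}}\leq \|\mathrm{P}_m(u_k-u_l)\|_{\mathrm{X}}+\|(I-\mathrm{P}_m)u_k\|_{\mathrm{X}}+\|(I-\mathrm{P}_m)u_l\|_{\mathrm{X}}$$
holds for each fixed $m$; for $k,l$ large the first term is small because $\{\mathrm{P}_m u_k\}_k$ is Cauchy, while the last two are bounded by $2/m$. Letting first $k,l\to\infty$ and then $m\to\infty$ shows that $\{u_k\}$ is Cauchy, hence convergent by completeness of the Banach space $\mathrm{X}$. This is precisely pullback asymptotic compactness. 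Feeding it, together with the hypothesised random bounded absorbing set $\mathcal{K}$, into the cited existence theorem yields the random attractor
$$\mathcal{A}(\omega)=\bigcap_{s\geq 0}\overline{\bigcup_{t\geq s}\varphi(t,\theta_{-t}\omega,\mathcal{K}(\theta_{-t}\omega))},$$
which is nonempty and compact by the compactness just proved, $\varphi$-invariant by the cocycle property together with the continuity of $\varphi(t,\omega,\cdot)$, and $\mathfrak{D}$-attracting by combining the absorption property with the subsequence convergence; uniqueness follows because two attractors would each have to attract the other and be invariant, forcing them to coincide.

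I expect the main obstacle to be not the Cauchy estimate, which is routine, but the measurability bookkeeping: one must verify that $\omega\mapsto\mathcal{A}(\omega)$ is a genuine random compact set and that the finite-dimensional subspaces and projections delivered by flattening can be chosen compatibly with this measurability. I would dispatch this by invoking the measurable-selection and $\omega$-limit-set results already underlying \cite{CF} and \cite{KL}, rather than reconstructing them here.
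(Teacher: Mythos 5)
The paper does not prove this statement; it is quoted from \cite{KL}, and your argument is precisely the standard proof given there: flattening implies pullback asymptotic compactness via the finite-dimensional projection plus small-tail decomposition and a diagonal/Cauchy argument, after which existence and uniqueness of the attractor follow from the asymptotically compact version of the Crauel--Flandoli existence theorem. Your reasoning is correct (the only cosmetic imprecision is that Theorem 3.11 of \cite{CF} as literally stated assumes a compact absorbing set, so one should cite the asymptotically compact variant, e.g.\ as in \cite{BL} or \cite{KL} itself), so nothing further is needed.
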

	\subsection{Stochastic convective Brinkman-Forchheimer equations}
	In this subsection, we provide the abstract formulation of CBF equations \eqref{1} and discuss about its stochastic counter part. We  also discuss about the existence and uniqeness of strong solutions (in the probabilistic sense) of the stochastic CBF equations. We take the external forcing $\f$ appearing in \eqref{1} independent of time.
	\subsubsection{Abstract formulation }
	On taking orthogonal projection $\P_{\H}$ onto the first equation in \eqref{1}, we obtain 
	\begin{equation}\label{D-CBF}
	\left\{
	\begin{aligned}
	\frac{\d\u}{\d t}+\mu \A\u+\B(\u)+\beta \mathcal{C}(\u)&=\f , \ \ \ t\geq 0, \\ 
	\u(0)&=\u_0,
	\end{aligned}
	\right.
	\end{equation}
	where $\u_0\in\H$ and $\f\in\V'.$ A small random perturbation of the abstract deterministic 2D CBF equations \eqref{D-CBF} is given by
	\begin{equation}\label{S-CBF}
	\left\{
	\begin{aligned}
	\d\u_\varepsilon+[\mu \A\u_\varepsilon+\B(\u_\varepsilon)+\beta \mathcal{C}(\u_\varepsilon)]\d t&=\f \d t +\varepsilon \d\text{W}(t), \ \ \ t\geq 0, \\ 
	\u_\varepsilon(0)&=\u_0,
	\end{aligned}
	\right.
	\end{equation}
	for $r\geq 1$ and $\varepsilon\in(0, 1]$, where we assume that $\u_0\in \H,\ \f\in \V'$ and $\text W(t), \ t\in \R,$ is a two-sided cylindrical Wiener process in $\H$ with its Reproducing Kernal Hilbert Sapce (RKHS) $\mathrm{K}$ defined on some filtered probability space $(\Omega, \mathscr{F}, \{\mathscr{F}_t\}_{t\in \R}, \mathbb{P})$. Remember that  RKHS of a centered Gaussion measure $\nu$ on a separable Banach space $\mathrm{X}$ is a unique Hilbert space $(\mathrm{K}, \|\cdot\|_{\mathrm{K}})$ such that $\mathrm{K}\hookrightarrow \mathrm{X}$ continuously and for each $\varPsi\in \mathrm{X}^{*}$, the random variable $\varPsi$ on probability space $(\mathrm{X}, \nu)$ is normal with mean $0$ and variance $\|\varPsi\|^{2}_{\mathrm{K}}$ (\cite{DZ1}). 
	
	In this paper, we assume that RKHS $\mathrm{K}$ satisfies the following assumption:
	\begin{assumption}\label{assump}
		$ \mathrm{K} \subset \V \cap \H^{2} (\mathcal{O})$ is a Hilbert space such that for some $\delta\in (0, 1/2),$
		\begin{align}\label{A1}
		\A^{-\delta} : \mathrm{K} \to \V \cap \H^{2} (\mathcal{O}) \   \text{ is }\ \gamma \text{-radonifying.}
		\end{align}
	\end{assumption}

	\begin{remark}
		Since $\D(\A)=\V \cap \H^{2} (\mathcal{O})$, Assumption \ref{assump} can be reformulated in the following way also (see \cite{BL1}). $\mathrm{K}$ is a Hilbert space such that $\mathrm{K}\subset\D(\A)$ and for some $\delta\in(0,1/2)$, the map \begin{align}\label{34}
		\A^{-\delta-1} : \mathrm{K} \to \H \   \text{ is }\ \gamma \text{-radonifying.}
		\end{align}
		Note that \eqref{34} also says that the mapping $\A^{-\delta-1} : \mathrm{K} \to \H$  is Hilbert-Schmidt.	Since $\mathcal{O}$ is a bounded domain, then $\A^{-s}:\H\to\H$ is Hilbert-Schmidt if and only if $\sum_{j=1}^{\infty} \lambda_j^{-2s}<\infty,$ where  $\A e_j=\lambda_j e_j, j\in \N$ and $e_j$ is an orthogonal basis of $\H$. In bounded domains, we know that $\lambda_j\sim j$ and hence $\A^{-s}$ is Hilbert-Schmidt if and only if $s>\frac{1}{2}.$ In other words, with $\mathrm{K}=\D(\A^{s+1}),$ the embedding $\mathrm{K}\hookrightarrow\V\cap\H^2(\mathcal{O})$ is $\gamma$-radonifying if and only if $s>\frac{1}{2}.$ Thus, Assumption \ref{assump} is satisfied for any $\delta>0.$ In fact, the condition \eqref{A1} holds if and only if the operator $\A^{-(s+1+\delta)}:\H\to \V\cap\H^2(\mathcal{O})$ is $\gamma$-radonifying. The requirement of  $\delta<\frac{1}{2}$ in Assumption \ref{assump} is necessary because we need (see subsection \ref{O_up}) the corresponding Ornstein-Uhlenbeck process has to take values in $\V\cap\H^2(\mathcal{O})$.
	\end{remark}
	\subsection{Solution to 2D SCBF equations }
	In this subsection, we provide the definition of a pathwise unique strong  solution in the probabilistic sense to the system \eqref{S-CBF}.
	\begin{definition}\label{def3.2}
		Let $\u_0 \in \H$, $r\geq1$, $\f\in \V'$ and $\mathrm{W}(t), t\in \R$ is two sided Wiener process in $\H$ with its RKHS $\mathrm{K}$. An $\H$-valued $\{\mathscr{F}_t\}_{t\geq 0}$-adapted stochastic process $\u_\varepsilon(t), \ t\geq 0,$ is called a strong solution to the system \eqref{S-CBF} if the following conditios are satisfied:
		\begin{itemize}
			\item [(i)] the process $\u_\varepsilon\in \mathrm{L}^{2}(\Omega; \mathrm{L}^{\infty}(0, T; \H) \cap \mathrm{L}^{2}(0, T; \V))\cap \mathrm{L}^{r+1}(\Omega; \mathrm{L}^{r+1}(0,T;\widetilde{\L}^{r+1}(\mathcal{O})))$  with $\mathbb{P}$-a.s., trajectories in $\mathrm{C}([0, T]; \H) \cap \mathrm{L}^{2}(0, T; \V).$ 
			\item [(ii)] the following equality holds for every $t\in[0,T]$ and for any $\psi\in \V$, $\mathbb{P}$-a.s.  
			\begin{align}\label{W-SCBF}
			(\u_\varepsilon(t), \psi) &= (\u_0, \psi) - \int_{0}^{t} \langle \mu \A\u_\varepsilon(s)+\B(\u_\varepsilon(s))+\beta \mathcal{C}(\u_\varepsilon(s)) , \psi\rangle \d s  +  \int_{0}^{t}\langle \f , \psi \rangle \d s\nonumber \\&\quad +  \int_{0}^{t} (\d\mathrm{W}(s), \psi).
			\end{align}
		\end{itemize}
	\end{definition} 
	\begin{theorem}[\cite{MTM}]
		Let $\u_0 \in \H$, $\f\in \V'$ and Assumption \ref{assump} be satisfied. Then, for $r\geq 1$, there exists a unique strong solution $\u_\varepsilon(\cdot)$ to the system \eqref{S-CBF} in the sense of Definition \ref{def3.2}. In addition, let $\u_0\in \V$ and $\f\in\H$. Then, for $r\geq 1,$ the pathwise unique strong solution $\u_\varepsilon(\cdot)$ to the system \eqref{S-CBF} satisfies the following regularity:
		$$\u_\varepsilon\in \mathrm{L}^{2}(\Omega; \mathrm{L}^{\infty}(0, T; \V) \cap \mathrm{L}^{2}(0, T; \D(\A)))\cap \mathrm{L}^{r+1}(\Omega; \mathrm{L}^{r+1}(0,T;\widetilde{\L}^{3(r+1)}(\mathcal{O}))).$$ Moreover, the $\mathscr{F}_t$-adapted paths of $\u_\varepsilon(\cdot)$ are continuous with trajectories in $\mathrm{C}([0,T];\V), \mathbb{P}$-a.s.
	\end{theorem}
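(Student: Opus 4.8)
The plan is to remove the noise through an Ornstein--Uhlenbeck process and then treat the resulting random equation pathwise. First I would let $\z$ denote the stationary solution of the linear problem $\d\z+\mu\A\z\,\d t=\d\W$; by Assumption \ref{assump}, with $\delta<1/2$, this process has $\mathbb{P}$-a.s.\ trajectories in $\C([0,T];\V\cap\H^2(\mathcal{O}))$, which is precisely the role of the regularity threshold $\delta<1/2$. Substituting $\u_\varepsilon=\v+\varepsilon\z$, the process $\v$ satisfies, $\mathbb{P}$-a.s., the random equation
\begin{equation*}
\frac{\d\v}{\d t}+\mu\A\v+\B(\v+\varepsilon\z)+\beta\mathcal{C}(\v+\varepsilon\z)=\f,\qquad \v(0)=\u_0-\varepsilon\z(0),
\end{equation*}
which no longer contains a stochastic integral and can be analysed for each fixed sample path.

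Next I would solve this equation by a Galerkin scheme. Projecting onto the span $\H_n$ of the first $n$ eigenvectors of $\A$ yields a finite-dimensional ODE system with locally Lipschitz right-hand side, hence a local solution $\v_n$, which the a priori estimates extend globally. Testing with $\v_n$ and using the coercivity $\langle\A\v_n,\v_n\rangle=\|\v_n\|_{\V}^2$ from \eqref{2.7a}, the cancellation $b(\v_n,\v_n,\v_n)=0$ from \eqref{b0}, and the coercivity $\langle\mathcal{C}(\u),\u\rangle=\|\u\|_{\widetilde{\L}^{r+1}}^{r+1}$ of the damping term, I would bound the cross terms generated by $\varepsilon\z$ via Ladyzhenskaya's inequality \eqref{lady} and the $\V\cap\H^2(\mathcal{O})$-regularity of $\z$; Gronwall's inequality then gives a uniform bound in $\mathrm{L}^\infty(0,T;\H)\cap\mathrm{L}^2(0,T;\V)\cap\mathrm{L}^{r+1}(0,T;\widetilde{\L}^{r+1})$, together with a uniform bound on $\v_n'$ in a suitable dual space.

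From these bounds I would pass to the limit. Banach--Alaoglu provides weak and weak-$*$ limits, the Aubin--Lions--Simon lemma (using that $\V\hookrightarrow\H$ is compact) provides strong convergence in $\mathrm{L}^2(0,T;\H)$, and the linear and convective terms are then identified in the usual way. The delicate point is the nonlinear damping term: since strong convergence in $\widetilde{\L}^{r+1}$ is unavailable, I would use the Minty--Browder monotonicity argument, exploiting the monotonicity \eqref{MO_c} of $\mathcal{C}$ together with the convergence of the energy, to identify the weak limit of $\mathcal{C}(\v_n+\varepsilon\z)$ with $\mathcal{C}(\v+\varepsilon\z)$. Undoing the substitution yields a strong solution $\u_\varepsilon$ in the sense of Definition \ref{def3.2}. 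Pathwise uniqueness follows by testing the equation for the difference $\w$ of two solutions with $\w$, discarding the nonnegative contribution from \eqref{MO_c}, estimating the remaining convective term through \eqref{lady}, and invoking Gronwall.

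For the improved regularity when $\u_0\in\V$ and $\f\in\H$, I would test the Galerkin equation with $\A\v_n$. The term $\mu\|\A\v_n\|_{\H}^2$ carries the good sign, the convective term is absorbed using the trilinear estimate \eqref{b1} and Young's inequality, and the damping term yields, after integration by parts, a nonnegative quantity controlling $\bigl\||\v_n|^{\frac{r-1}{2}}\nabla\v_n\bigr\|_{\mathrm{L}^2}^2$ and $\bigl\|\nabla|\v_n|^{\frac{r+1}{2}}\bigr\|_{\mathrm{L}^2}^2$. Gronwall then produces the uniform bound in $\mathrm{L}^\infty(0,T;\V)\cap\mathrm{L}^2(0,T;\D(\A))$, while the control of $\nabla|\v_n|^{\frac{r+1}{2}}$ with the two-dimensional embedding $\mathrm{H}^1(\mathcal{O})\hookrightarrow\mathrm{L}^6(\mathcal{O})$ delivers $\u_\varepsilon\in\mathrm{L}^{r+1}(0,T;\widetilde{\L}^{3(r+1)}(\mathcal{O}))$; continuity of the paths in $\V$ follows from $\u_\varepsilon\in\mathrm{L}^2(0,T;\D(\A))$ with $\u_\varepsilon'\in\mathrm{L}^2(0,T;\H)$ by a standard interpolation. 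The main obstacle throughout is the nonlinear damping term, which for $r>3$ is not dominated by the viscous dissipation; its treatment in the limit passage via monotonicity and in the $\V$-estimate via the integration-by-parts identity is where the essential work concentrates.
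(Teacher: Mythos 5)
The paper contains no proof of this theorem: it is imported verbatim from \cite{MTM}, so there is nothing internal to compare against. Your outline follows the standard route for additive noise (Ornstein--Uhlenbeck subtraction, pathwise Galerkin approximation, energy estimates, Minty--Browder monotonicity for the damping term, uniqueness via \eqref{MO_c} and Ladyzhenskaya's inequality), and the $\H$-level existence, uniqueness and limit-passage parts of the argument are sound.

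There is, however, one concrete gap, and it sits exactly where this paper warns of trouble. In the $\V$-estimate you assert that pairing the damping term with $\A\v_n$ ``yields, after integration by parts, a nonnegative quantity controlling'' $\bigl\||\v_n|^{\frac{r-1}{2}}\nabla\v_n\bigr\|_{\mathrm{L}^2}^2$ and $\bigl\|\nabla|\v_n|^{\frac{r+1}{2}}\bigr\|_{\mathrm{L}^2}^2$. In a bounded domain $\A\u=-\P_{\H}\Delta\u\neq-\Delta\u$, and since $\P_{\H}$ is self-adjoint with $\A\u\in\H$ one has $(\P_{\H}(|\u|^{r-1}\u),\A\u)=(|\u|^{r-1}\u,\A\u)=(|\u|^{r-1}\u,-\Delta\u)-(|\u|^{r-1}\u,\nabla q)$ for the pressure $q$ in the Helmholtz decomposition of $-\Delta\u$; the last term does not vanish because $|\u|^{r-1}\u$ is not divergence free, and it is not sign-definite. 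This is precisely the ``commutativity of the projection operator with $-\Delta$'' obstruction that the introduction invokes against \cite{You}; recovering the good-sign terms requires the Kalantarov--Zelik-type argument of \cite{KZ,MTM}, not a bare integration by parts. Note that for the $\mathrm{L}^{\infty}(0,T;\V)\cap\mathrm{L}^{2}(0,T;\D(\A))$ bound alone this detour is unnecessary in two dimensions: one may simply estimate $|(\mathcal{C}(\v_n+\varepsilon\z),\A\v_n)|\leq\|\v_n+\varepsilon\z\|_{\wi\L^{2r}}^{r}\|\A\v_n\|_{\H}$, apply \eqref{Gen_lady} and absorb into $\mu\|\A\v_n\|_{\H}^2$, exactly as this paper does in the proof of Theorem \ref{V_ab}. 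But the $\wi\L^{3(r+1)}$ regularity claimed in the statement genuinely needs the $\nabla|\u|^{\frac{r+1}{2}}$ term, so at that point the projection issue must be confronted rather than passed over.
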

	\section{RDS generated by the 2D SCBF equations}\label{sec3}\setcounter{equation}{0}
	In this section, we construct the metric dynamical system and the  random dynamical system for the model \eqref{S-CBF}. 
	\subsection{Wiener process}
	
	Let us denote $\mathrm{X} = \V \cap \H^{2} (\mathcal{O})$ and let $\mathrm{E}$ denote the completion of $\A^{-\delta}\mathrm{X}$ with respect to the image norm $\|x\|_{\mathrm{E}}  = \|\A^{-\delta} x\|_{\mathrm{X}} , \ \text{for } \ x\in \mathrm{X}, \text{where } \|\cdot\|_{\mathrm{X}} = \|\cdot\|_{\V} + \|\cdot\|_{\H^{2} }.$ Note that $\mathrm{E}$ is a separable Banach space (see \cite{Brze2}).
	
	For $\xi \in(0, 1/2),$ we set 
	$$ \|\omega\|_{\C^{\xi}_{1/2} (\mathbb{R},\mathrm{E})} = \sup_{t\neq s \in \mathbb{R}} \frac{\|\omega(t) - \omega(s)\|_{\mathrm{E}}}{|t-s|^{\xi}(1+|t|+|s|)^{1/2}} .$$
	We also define
	\begin{align*}
	\C^{\xi}_{1/2} (\mathbb{R}, \mathrm{E}) &= \left\{ \omega \in \C(\mathbb{R}, \mathrm{E}) : \omega(0)=0,\  \|\omega\|_{\C^{\xi}_{1/2} (\mathbb{R},\mathrm{E})} < \infty \right\},\\
	\Omega(\xi, \mathrm{E})&=\text{the closure of } \{ \omega \in \C^\infty_0 (\mathbb{R}, \mathrm{E}) : \omega(0) = 0 \} \ \text{ in } \ \C^{\xi}_{1/2} (\mathbb{R},\mathrm{E}).
	\end{align*}
	The space $\Omega(\xi, \mathrm{E})$ is a separable Banach space. 
	Let us denote $\mathscr{F}$ for the Borel $\sigma$-algebra on $\Omega(\xi, \mathrm{E}).$ For $\xi\in (0, 1/2)$, there exists a Borel probability measure $\mathbb{P}$ on $\Omega(\xi, \mathrm{E})$ (see \cite{Brze}) such that the canonical process $w_t, \ t\in \mathbb{R},$ defined by 
	\begin{align}\label{Wp}
	w_t(\omega) := \omega(t), \ \ \omega \in \Omega(\xi, \mathrm{E}),
	\end{align}
	is an $\mathrm{E}$-valued two-sided Wiener process. 
	
	For $t\in \mathbb{R},$ let $\mathscr{F}_t := \sigma \{ w_s : s \leq t \}.$ Then there exists a bounded linear map $\text{W}_t : \mathrm{K} \to \mathrm{L}^2(\Omega(\xi, \mathrm{E}), \mathscr{F}_t  ,  \mathbb{P}).$ Moreover, the family $\{\text{W}_t\}_{t\in \mathbb{R}}$ is a $\mathrm{K}$-cylindrical Wiener process on a filtered probability space $(\Omega(\xi, \mathrm{E}), \mathscr{F}, (\mathscr{F}_t)_{t \in \mathbb{R}} , \mathbb{P})$ (see also \cite{BP}).
	
	On the space $\Omega(\xi, \mathrm{E}),$ we consider a flow $\theta = \{\theta_t\}_{t\in \mathbb{R}}$ defined by
	$$ \theta_t \omega(\cdot) = \omega(\cdot + t) - \omega(t), \ \ \ \omega\in \Omega(\xi, \mathrm{E}), \ \ t\in \mathbb{R}.$$ 
	This flow keeps the space $\Omega(\xi, \mathrm{E})$ invariant. It is obvious that for each $ t \in \R,\ \theta_t$ preserves $\mathbb{P}.$ 

	\subsection{Ornstein-Uhlenbeck process}\label{O_up}

	In this subsection, we define an Ornstein-Uhlenbeck processes under Assumption \ref{assump} (for more details see section 4,  \cite{KM}). For $\delta$ as in Assumption \ref{assump}, $\mu > 0, \ \alpha \geq 0, \ \xi \in (\delta, 1/2)$ and $ \omega \in C^{\xi}_{1/2} (\mathbb{R}, \mathrm{E})$ (so that $(\mu \A + \alpha I)^{-\delta}\omega \in \C^{\xi}_{1/2} (\mathbb{R}, \mathrm{X})$), we define 
	\begin{align}\label{DOu}
	\z_{\alpha}(\omega)(t)&:=\int_{-\infty}^{t} (\mu \A + \alpha I)^{1+\delta} e^{-(t-r)(\mu \A + \alpha I)} [(\mu \A + \alpha I)^{-\delta}\omega(t) - (\mu \A + \alpha I)^{-\delta}\omega(r)]\d r, 
	\end{align}
	for any $t\geq 0.$ Hence $\z_{\alpha}(t)$ is the solution of the following equation:
	\begin{align}\label{OuE}
	\frac{\d\z_{\alpha} (t)}{\d t} + (\mu \A + \alpha I)\z_{\alpha} = \dot{\omega} (t), \ \ t\in \mathbb{R}.
	\end{align}
	Analogously to our definition \eqref{Wp} of the Wiener process $w(t), \ t\in \R,$ we can view the formula \eqref{DOu} as a definition of a process $\z_{\alpha}(t), \ t\in \R,$ on the probability space $(\Omega(\xi, \mathrm{E}), \mathscr{F}, \mathbb{P}).$ Equation \eqref{OuE} suggests that this process is an Ornstein-Uhlenbeck process. In fact we have the following result.
	\begin{proposition}[Proposition 6.10, \cite{BL}]\label{SOUP}
		The process $\z_{\alpha}(t), \ t\in \mathbb{R},$ is stationary Ornstein-Uhlenbeck process on $(\Omega(\xi, \mathrm{E}), \mathscr{F}, \mathbb{P})$ . It is a solution of the equation 
		\begin{align}\label{OUPe}
		\d\z_{\alpha}(t) + (\mu \A + \alpha I)\z_{\alpha}(t) \d t = \d\mathrm{W}(t), \ \ t\in \mathbb{R},
		\end{align}
		that is, for all $t\in \mathbb{R},$ 
		\begin{align}\label{oup}
		\z_\alpha (t) = \int_{-\infty}^{t} e^{-(t-s)(\mu \A + \alpha I)} \d\mathrm{W}(s),
		\end{align}
		$\mathbb{P}$-a.s.,	where the integral is the It\^o integral on the M-type 2 Banach space $\mathrm{X}$ in the sense of \cite{Brze1}. 
		In particular, for some $C$ depending on $\mathrm{X}$,
		\begin{align}\label{E-OUP}
		\mathbb{E}\left[\|\z_{\alpha} (t)\|^2_{\mathrm{X}} \right]&= \mathbb{E}\left[\left\|\int_{-\infty}^{t} e^{-(t-s)(\mu \A + \alpha I)} \d\mathrm{W}(s)\right\|^2_{\mathrm{X}} \right]\leq C\int_{-\infty}^{t} \|e^{-(t-s)(\mu \A +  \alpha I)}\|^2_{\gamma(\mathrm{K},\mathrm{X})} \d s \nonumber\\&=C \int_{0}^{\infty}  e^{-2\alpha s} \|e^{-\mu s \A}\|^2_{\gamma(\mathrm{K},\mathrm{X})} \d s.
		\end{align} 
		Moreover, $\mathbb{E}\left[\|\z_{\alpha} (t)\|^2_{\mathrm{X}}\right]$ tends to $0$ as $\alpha \to \infty.$
		
	\end{proposition}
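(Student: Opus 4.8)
The plan is to follow the standard construction of a stationary Ornstein--Uhlenbeck process associated with an analytic generator on an M-type 2 space, as developed in \cite{Brze1,BCLLLR,BL}, and then to read off the quantitative bound \eqref{E-OUP} together with its decay in $\alpha$. First I would record the structural facts about the generator. Since $\A$ is non-negative and self-adjoint with smallest eigenvalue $\lambda_1>0$ (see subsection \ref{Operator}), the operator $S:=\mu\A+\alpha I$ is positive self-adjoint on $\mathrm{X}=\V\cap\H^2(\mathcal{O})=\D(\A)$, and $-S$ generates an exponentially stable analytic semigroup $\{e^{-tS}\}_{t\geq0}$ with $\|S^{1+\delta}e^{-tS}\|_{\mathcal{L}(\mathrm{X})}\leq C t^{-(1+\delta)}e^{-\gamma t}$ for any $\gamma<\mu\lambda_1+\alpha$; this is obtained by splitting $e^{-tS}=e^{-\frac{t}{2}S}e^{-\frac{t}{2}S}$ and combining the analytic bound for the first factor with the exponential operator-norm bound for the second. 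I also note that $\mathrm{X}=\D(\A)$ is a separable Hilbert space, hence in particular an M-type 2 Banach space, so the stochastic integration theory of \cite{Brze1} applies, and moreover the condition $\xi>\delta$ in the definition of $\z_\alpha$ guarantees that the pathwise integral \eqref{DOu} converges in $\mathrm{X}$, because the kernel bound $t^{-(1+\delta)}$ combines with the H\"older exponent $\xi$ of $\omega$ to produce the integrable singularity $\tau^{\xi-1-\delta}$ at $\tau=0$.

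Next I would establish that the pathwise formula \eqref{DOu} and the It\^o stochastic convolution \eqref{oup} define one and the same $\mathrm{X}$-valued process solving \eqref{OUPe}/\eqref{OuE}. The integrand $e^{-(t-s)S}$ is $\gamma$-radonifying from $\mathrm{K}$ to $\mathrm{X}$: factoring $e^{-(t-s)S}=\big(\A^{\delta}e^{-(t-s)\mu\A}\big)e^{-(t-s)\alpha}\A^{-\delta}$ and using Assumption \ref{assump} (so that $\A^{-\delta}\in\gamma(\mathrm{K},\mathrm{X})$ by \eqref{A1}), the ideal property of $\gamma$-radonifying operators, and the analytic bound $\|\A^{\delta}e^{-\tau\mu\A}\|_{\mathcal{L}(\mathrm{X})}\leq C\tau^{-\delta}$ shows both that the It\^o integral is well defined and that $\|e^{-(t-s)S}\|_{\gamma(\mathrm{K},\mathrm{X})}$ is square-integrable over $s\in(-\infty,t)$. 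The coincidence of \eqref{DOu} with \eqref{oup} then follows by a stochastic Fubini argument together with integration by parts in the increment variable, exactly as in \cite{BCLLLR,BL}, and stationarity of $\z_\alpha$ is inherited from the invariance of $\mathbb{P}$ under the flow $\theta$ via the identity $\z_\alpha(\theta_s\omega)(t)=\z_\alpha(\omega)(t+s)$.

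With the representation \eqref{oup} in hand, the estimate \eqref{E-OUP} is the second-moment inequality for stochastic convolutions on M-type 2 spaces \cite{Brze1} (which, since $\mathrm{X}$ is Hilbert, reduces to the It\^o isometry with the $\gamma$-norm replaced by the Hilbert--Schmidt norm, so $\leq$ could be sharpened to equality; I keep $\leq$ for generality): $\mathbb{E}\|\z_\alpha(t)\|_{\mathrm{X}}^2\leq C\int_{-\infty}^{t}\|e^{-(t-s)S}\|^2_{\gamma(\mathrm{K},\mathrm{X})}\d s$. Substituting $s\mapsto t-s$ and pulling the scalar factor out of the $\gamma$-norm through $e^{-(t-s)S}=e^{-(t-s)\alpha}e^{-(t-s)\mu\A}$ yields the final expression $C\int_0^\infty e^{-2\alpha s}\|e^{-\mu s\A}\|^2_{\gamma(\mathrm{K},\mathrm{X})}\d s$.

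Finally, for the decay $\mathbb{E}\|\z_\alpha(t)\|^2_{\mathrm{X}}\to0$ as $\alpha\to\infty$, I would first check finiteness of the integral: near $s=0$ the ideal-property bound gives $\|e^{-\mu s\A}\|^2_{\gamma(\mathrm{K},\mathrm{X})}\leq C s^{-2\delta}$, which is integrable precisely because $\delta<1/2$ (this is where the restriction in Assumption \ref{assump} is used), while for large $s$ the decay $\|e^{-\mu s\A}\|_{\mathcal{L}(\mathrm{X})}\leq e^{-\mu\lambda_1 s}$ combined with $\|e^{-\mu\A}\|_{\gamma(\mathrm{K},\mathrm{X})}<\infty$ forces exponential decay of the $\gamma$-norm. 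Hence the integrand is dominated by an integrable function independent of $\alpha\geq\alpha_0>0$, and since $e^{-2\alpha s}\to0$ pointwise for $s>0$, dominated convergence gives the claim. The main obstacle, and the only genuinely non-routine point, is the rigorous identification of the pathwise integral \eqref{DOu} with the It\^o convolution \eqref{oup} on the Banach space $\mathrm{X}$: this requires the two-sided M-type 2 stochastic integration theory and a careful stochastic Fubini/integration-by-parts argument, whereas everything else reduces to the analytic-semigroup estimates and the $\gamma$-radonifying calculus already guaranteed by Assumption \ref{assump}.
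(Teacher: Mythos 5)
Your proposal is correct and follows essentially the same route the paper relies on: the paper gives no proof of its own here, importing the statement verbatim as Proposition 6.10 of \cite{BL}, and your reconstruction (analytic-semigroup bounds for $\mu\A+\alpha I$, the factorization through $\A^{-\delta}$ with the ideal property of $\gamma$-radonifying operators under Assumption \ref{assump}, the stochastic-Fubini identification of \eqref{DOu} with \eqref{oup}, the M-type 2 second-moment inequality, and dominated convergence for the $\alpha\to\infty$ limit) is precisely the argument of \cite{BL,BCLLLR,Brze1} that the citation invokes. You also correctly isolate the two delicate points — where $\delta<1/2$ and $\xi>\delta$ are actually used — so nothing is missing.
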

	\begin{remark}\label{stationary}
		By Proposition 4.1 \cite{KM}, we write the following result for the Ornstein-Uhlenbeck process given in Proposition \ref{SOUP}:
		\begin{align}
		\z_\alpha(\theta_s \omega)(t) = \z_\alpha(\omega)(t+s), \ \ t, s \in \mathbb{R},
		\end{align}and  
		\begin{align}\label{O-U_conti}
		\z_\alpha\in\mathrm{L}^{q} (a, b; \mathrm{X})
		\end{align}
		where $q\in [1, \infty].$
	\end{remark}
	Since by Proposition \ref{SOUP}, the process $\z_{\alpha}(t), \ t\in \R $ is an $\mathrm{X}$-valued stationary and ergodic. Hence, by the Strong law of Large Numbers, we have  (see \cite{DZ} for a similar argument)
	\begin{align}\label{SLLN}
	\lim_{t \to \infty} \frac{1}{t} \int_{-t}^{0} \|\z_{\alpha}(s)\|^{2}_{\mathrm{X}} \d s = \mathbb{E}\left[ \|\z_{\alpha}(0)\|^{2}_{\mathrm{X}}\right], \ \  \mathbb{P}\text{-a.s., on }\ \C^{\xi}_{1/2}(\R, \mathrm{X}).
	\end{align}
	Therefore it follows from Proposition \ref{SOUP} that we can find $\alpha_0$ such that for all $\alpha \geq \alpha_0,$ 
	\begin{align}\label{Bddns}
	\mathbb{E}\left[\|\z_{\alpha} (0)\|^{2}_{\mathrm{X}}\right] \leq \frac{\mu^2\lambda_1}{16},
	\end{align}  
	where $\lambda_1$ is the constant appearing in inequality \eqref{poin} (Poincar\'e inequality).
	
	By $\Omega_{\alpha}(\xi, \mathrm{E})$, we denote the set of those $\omega\in \Omega(\xi, \mathrm{E})$ for which the equality \eqref{SLLN} holds true. Therefore, we fix $\xi \in (\delta, 1/2)$ and set $$\Omega := \hat{\Omega}(\xi, \mathrm{E}) = \bigcap^{\infty}_{n=0} \Omega_{n}(\xi, \mathrm{E}).$$
	
	For reasons that will become clear later, we take as a model of a metric DS the quadruple $(\Omega, \hat{\mathscr{F}}, \hat{\mathbb{P}}, \hat{\theta}),$ where $\hat{\mathscr{F}}$, $\hat{\mathbb{P}}$, $\hat{\theta}$ are respectively the natural restrictions of $\mathscr{F}$, $\mathbb{P}$ and $\theta$ to $\Omega.$	
	
	\begin{proposition}\label{m-DS}
		The quadruple $(\Omega, \hat{\mathscr{F}}, \hat{\mathbb{P}}, \hat{\theta})$ is a metric DS. For each $\omega\in \Omega,$ the limit in \eqref{SLLN} exists.
	\end{proposition}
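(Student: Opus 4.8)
The plan is to obtain the metric DS structure on $(\Omega, \hat{\mathscr{F}}, \hat{\mathbb{P}}, \hat{\theta})$ by restriction from the already-constructed metric DS $(\Omega(\xi,\mathrm{E}), \mathscr{F}, \mathbb{P}, \theta)$, so that the only substantial work is to verify that $\Omega=\hat{\Omega}(\xi,\mathrm{E})$ is a $\theta$-invariant set of full measure. First I would record that, for each fixed integer $n\geq 0$, the process $\z_n(t)$ is stationary and ergodic (Proposition \ref{SOUP}), so the Birkhoff--Khinchin ergodic theorem yields the Ces\`aro limit \eqref{SLLN} for $\mathbb{P}$-a.e.\ $\omega$; hence $\mathbb{P}(\Omega_n(\xi,\mathrm{E}))=1$. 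Each $\Omega_n(\xi,\mathrm{E})$ is measurable, being the set on which the limit of the measurable time-averages $t\mapsto\frac1t\int_{-t}^{0}\|\z_n(\omega)(s)\|_{\mathrm{X}}^2\d s$ exists and equals the constant $\E[\|\z_n(0)\|_{\mathrm{X}}^2]$. Since $\Omega$ is a countable intersection of these full-measure sets, $\Omega\in\mathscr{F}$ and $\mathbb{P}(\Omega)=1$. This already gives the second assertion: by construction every $\omega\in\Omega$ lies in each $\Omega_n(\xi,\mathrm{E})$, so the limit in \eqref{SLLN} exists for it.

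The heart of the argument is the $\theta$-invariance of $\Omega$, for which it suffices to prove $\theta_h\Omega_n=\Omega_n$ for every $n$ and every $h\in\R$. Fix $\omega\in\Omega_n$ and $h\in\R$. Using the cocycle identity $\z_n(\theta_h\omega)(t)=\z_n(\omega)(t+h)$ from Remark \ref{stationary} and the substitution $u=s+h$, I would write
\begin{align*}
\frac{1}{t}\int_{-t}^{0}\|\z_n(\theta_h\omega)(s)\|_{\mathrm{X}}^2\d s
&= \frac{1}{t}\int_{-(t-h)}^{0}\|\z_n(\omega)(u)\|_{\mathrm{X}}^2\d u + \frac{1}{t}\int_{0}^{h}\|\z_n(\omega)(u)\|_{\mathrm{X}}^2\d u.
\end{align*}
The last integral is finite by the local integrability \eqref{O-U_conti}, so its prefactor $\tfrac1t$ drives it to $0$ as $t\to\infty$; the first term equals $\tfrac{t-h}{t}\cdot\tfrac{1}{t-h}\int_{-(t-h)}^{0}\|\z_n(\omega)(u)\|_{\mathrm{X}}^2\d u$, and since $\tfrac{t-h}{t}\to 1$ and $t-h\to\infty$ while $\omega\in\Omega_n$, it converges to $\E[\|\z_n(0)\|_{\mathrm{X}}^2]$. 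Hence $\theta_h\omega\in\Omega_n$, so $\theta_h\Omega_n\subset\Omega_n$; applying the same estimate with $-h$ in place of $h$ gives the reverse inclusion, whence $\theta_h\Omega_n=\Omega_n$. Intersecting over $n$ produces $\theta_h\Omega=\Omega$.

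With invariance and full measure secured, the metric DS axioms transfer directly. The triple $(\Omega, \hat{\mathscr{F}}, \hat{\mathbb{P}})$ is a probability space because $\hat{\mathbb{P}}(\Omega)=1$; the identities $\hat{\theta}_0=\mathrm{id}$, $\hat{\theta}_{t+s}=\hat{\theta}_t\circ\hat{\theta}_s$, and the joint measurability of $(t,\omega)\mapsto\hat{\theta}_t\omega$ are inherited as restrictions of the corresponding properties of $\theta$ already established for the Wiener space; and for $A\in\hat{\mathscr{F}}$ the measure preservation $\hat{\mathbb{P}}(\hat{\theta}_t^{-1}A)=\mathbb{P}(\theta_t^{-1}A)=\mathbb{P}(A)=\hat{\mathbb{P}}(A)$ follows from the invariance of $\Omega$ together with the fact that each $\theta_t$ preserves $\mathbb{P}$.

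The step I expect to be the main obstacle is precisely the invariance computation: one must control the two boundary contributions arising in the change of variables, namely the short integral over $[0,h]$ and the rescaling factor $\tfrac{t-h}{t}$, and handle the sign of $h$ uniformly. These are exactly the places where \eqref{O-U_conti} and the membership $\omega\in\Omega_n$ close the argument, and everything else is a routine transfer of structure from the ambient metric DS.
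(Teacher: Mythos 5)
The paper states Proposition \ref{m-DS} without any proof at all --- it is presented as an immediate consequence of the preceding construction (with the real content implicitly deferred to \cite{BL} and \cite{BCLLLR}, where the analogous statement for the 2D stochastic Navier--Stokes equations is proved). Your argument is correct and supplies exactly the standard justification: measurability and full measure of each $\Omega_n(\xi,\mathrm{E})$ from the Birkhoff ergodic theorem applied to the stationary ergodic process $\z_n$ (which the paper itself already invokes to assert \eqref{SLLN} $\mathbb{P}$-a.s.), $\theta$-invariance of each $\Omega_n$ via the shift identity $\z_n(\theta_h\omega)(t)=\z_n(\omega)(t+h)$ of Remark \ref{stationary} together with the change of variables and the two vanishing boundary corrections, and then restriction of the ambient metric DS to the invariant full-measure set $\Omega=\bigcap_n\Omega_n(\xi,\mathrm{E})$. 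The invariance computation is the only nontrivial step and you carry it out correctly, including the two-sided inclusion obtained by running the argument with $-h$; the second assertion of the proposition is indeed true by construction, since $\Omega_n$ is defined precisely as the set where the limit in \eqref{SLLN} (for $\alpha=n$) exists and equals $\mathbb{E}\left[\|\z_n(0)\|^2_{\mathrm{X}}\right]$. I see no gap; if anything, your write-up is more complete than the paper, which records neither the invariance computation nor the measurability of $\Omega_n$.
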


	Let us now formulate an immediate and important consequence of the above result.
	
	\begin{corollary}\label{Bddns1}
		For each $\omega\in \Omega,$ there exists $t_0=t_0 (\omega) \geq 0 $ such that 
		\begin{align}
		\frac{8}{\mu} \int_{-t}^{0} \|\z_{\alpha}(s)\|^{2}_{\mathrm{X}} \d s \leq \frac{\mu \lambda_1 t}{2}, \ \  t\geq t_0.
		\end{align}
		Also, since the embedding $\mathrm{X}\hookrightarrow \V$ is a contraction, we have
		\begin{align}\label{Bddns2}
		\frac{8}{\mu} \int_{-t}^{0} \|\z_{\alpha}(s)\|^{2}_{\V} \d s \leq \frac{\mu \lambda_1 t}{2}, \ \  t\geq t_0.
		\end{align}
	\end{corollary}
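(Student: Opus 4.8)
The plan is to read off the result directly from the Strong Law of Large Numbers \eqref{SLLN} together with the uniform bound \eqref{Bddns} on the stationary second moment, and then to pass from the statement about the limit of Ces\`aro averages to an estimate valid for all sufficiently large $t$. First I would fix $\omega\in\Omega$ and take $\alpha\geq\alpha_0$ (with $\alpha$ an integer, so that $\Omega\subset\Omega_\alpha(\xi,\mathrm{E})$). By Proposition \ref{m-DS} the limit in \eqref{SLLN} exists for this $\omega$, and by the very definition of $\Omega_\alpha(\xi,\mathrm{E})$ it equals $\E[\|\z_\alpha(0)\|_{\mathrm{X}}^2]$. Combining this with \eqref{Bddns} gives $\lim_{t\to\infty}\frac{1}{t}\int_{-t}^{0}\|\z_\alpha(s)\|_{\mathrm{X}}^2\d s\leq\frac{\mu^2\lambda_1}{16}$.

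Next I would convert this limiting inequality into the pointwise-in-$t$ bound. Since the Ces\`aro averages converge to a limit not exceeding $\frac{\mu^2\lambda_1}{16}$, the definition of the limit provides a threshold $t_0=t_0(\omega)\geq 0$ such that $\frac{1}{t}\int_{-t}^{0}\|\z_\alpha(s)\|_{\mathrm{X}}^2\d s\leq\frac{\mu^2\lambda_1}{16}$ for all $t\geq t_0$. Multiplying through by $\frac{8t}{\mu}$ and using $\frac{8}{\mu}\cdot\frac{\mu^2\lambda_1}{16}=\frac{\mu\lambda_1}{2}$ yields the first asserted inequality. The only delicate point is the boundary case: since \eqref{Bddns} is a non-strict inequality, one should guarantee a genuine gap so that the convergence can absorb any small overshoot of the averages. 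This is not a real obstacle, because $\E[\|\z_\alpha(0)\|_{\mathrm{X}}^2]\to 0$ as $\alpha\to\infty$ by Proposition \ref{SOUP}, so $\alpha_0$ may be enlarged to make the bound strict; I expect this to be the sole subtlety, everything else being elementary.

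Finally, for the second estimate \eqref{Bddns2} I would use that the embedding $\mathrm{X}\hookrightarrow\V$ is a contraction, that is, $\|\z_\alpha(s)\|_{\V}\leq\|\z_\alpha(s)\|_{\mathrm{X}}$ for every $s$. Squaring and integrating over $[-t,0]$ gives $\int_{-t}^{0}\|\z_\alpha(s)\|_{\V}^2\d s\leq\int_{-t}^{0}\|\z_\alpha(s)\|_{\mathrm{X}}^2\d s$, so that multiplying by $\frac{8}{\mu}$ and invoking the first inequality immediately produces \eqref{Bddns2} with the same threshold $t_0$.
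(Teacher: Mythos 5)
Your proposal is correct and follows essentially the same route as the paper, which states this corollary as an immediate consequence of the ergodic limit \eqref{SLLN}, the moment bound \eqref{Bddns}, and the contraction $\mathrm{X}\hookrightarrow\V$, without writing out further details. Your extra remark about enlarging $\alpha_0$ to make the bound strict (so the Ces\`aro averages eventually fall below $\tfrac{\mu^2\lambda_1}{16}$) is a legitimate point of care that the paper glosses over, but it does not change the argument.
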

	\subsection{Random dynamical system}
	Let us recall that Assumption \ref{assump} is satisfied and that $\delta$ has the property stated there. We take a fixed $\mu > 0$ and some parameter $\alpha\geq 0$. We also fix $\xi \in (\delta, 1/2)$.
	
	Denote by $\v^{\alpha}_\varepsilon(t)=\u_\varepsilon(t) -\varepsilon \z_{\alpha}(\omega)(t)$, then $\v^{\alpha}_\varepsilon(t)$ satisfies the following abstract random dynamical system:
	\begin{equation}\label{cscbf}
	\left\{
	\begin{aligned}
	\frac{\d\v_\varepsilon(t)}{\d t} &= -\mu \A\v_\varepsilon (t)- \B(\v_\varepsilon(t) + \varepsilon\z(t)) - \beta \mathcal{C}(\v_\varepsilon (t)+\varepsilon \z(t)) +\varepsilon \alpha \z(t) + \f, \\
	\v_\varepsilon(0)&= \u_0 - \varepsilon\z_{\alpha}(0).
	\end{aligned}
	\right.
	\end{equation}
	Because $\z_{\alpha}(\omega) \in \C_{1/2} (\mathbb{R}, \mathrm{X}), \ \z_{\alpha}(\omega)(0)$ is a well defined element of $\V$. In what follows, we provide the definition of weak as well as strong solutions for the system \eqref{cscbf}.
		\begin{definition}
		Assume that $\u_0\in \H$ and  $\f\in \V'$. Let $T>0$ be any fixed time, a function $\v_\varepsilon(\cdot)$ is called a \emph{weak solution} of the problem \eqref{cscbf} on time interval $[0, T]$, if $$\v_\varepsilon\in \mathrm{C}([0,T];\H)\cap\mathrm{L}^2(0, T;\V), \  \frac{\d\v_\varepsilon}{\d t}\in \mathrm{L}^2(0,T;\V')$$ and it satisfies 
		\begin{itemize}
			\item [(i)] for any $\psi\in \V,$ 
			\begin{align}\label{W-CSCBF}
			\left<\frac{\d\v_\varepsilon(t)}{\d t}, \psi\right>& =  - \left< \mu \A\v_\varepsilon(t)+\B(\v_\varepsilon(t)+\varepsilon\z_{\alpha}(t))+\beta \mathcal{C}(\v_\varepsilon(s)+\varepsilon\z_{\alpha}(t)) , \psi\right> +  \left<\f , \psi \right> \nonumber\\&\quad +   \left(\varepsilon\alpha\z_{\alpha}(t), \psi\right),
			\end{align}
			for all $t\in[0,T]$.
			\item [(ii)] $\v_\varepsilon(t)$ satisfies the following initial data$$\v_\varepsilon(0)=\u_0-\varepsilon\z_{\alpha}(0).$$
		\end{itemize}
	\end{definition}
	\begin{definition}
		Assume that $\u_0\in \V$ and  $\f\in \H$. Let $T>0$ be any fixed time, a function $\v_\varepsilon(\cdot)$ is called a \emph{strong solution} of the problem \eqref{cscbf} on time interval $[0, T]$, if $$\v_\varepsilon\in \mathrm{C}([0,T];\V)\cap\mathrm{L}^2(0, T;\D(\A)), \  \frac{\d\v_\varepsilon}{\d t}\in \mathrm{L}^2(0,T;\H)$$ and it satisfies \eqref{cscbf} as an equality in $\H$ for a.e. $t\in(0,T)$. 
	\end{definition}
	Since, $\u_\varepsilon(\cdot)$ is the unique solution to the problem \eqref{S-CBF} and $\z_{\alpha}(\cdot)$ is the unique solution to the problem \eqref{OUPe}, one can easily obtain a unique solution $\v^{\alpha}_\varepsilon(t)$  to the problem \eqref{cscbf}.
	For the next theorem, we take $\f$ is dependent on $t$.
	
	\begin{theorem}\label{RDS_Conti1}
		Assume that, for some $T >0$ fixed, $\u_0^n \to \u_0$ in $\H$, 
		\begin{align*}
		\z_n \to \z\ \text{ in }\ \mathrm{L}^{\infty} (0, T; \V)\cap \mathrm{L}^2(0, T;\D(\A)),\ \ \f_n \to \f \ \text{ in }\ \mathrm{L}^2 (0, T; \V').
		\end{align*}
		Let us denote by $\v_\varepsilon(t, \z)\u_0$, the solution of the problem \eqref{cscbf} and by $\v_\varepsilon(t, \z_n)\u_0^n$, the solution of the problem \eqref{cscbf} with $\z, \f, \u_0$ being replaced by $\z_n, \f_n, \u_0^n$, respectively. Then $$\v_\varepsilon(\cdot, \z_n)\u_0^n \to \v_\varepsilon(\cdot, \z)\u_0 \ \text{ in } \ \mathrm{C}([0,T];\H)\cap\mathrm{L}^2 (0, T;\V).$$
		In particular, $\v_\varepsilon(T, \z_n)\u_0^n \to \v_\varepsilon(T, \z)\u_0$ in $\H$.
	\end{theorem}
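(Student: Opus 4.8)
The plan is to compare the two solutions directly and close an energy estimate via Gronwall's inequality. Write $\v_n := \v_\varepsilon(\cdot, \z_n)\u_0^n$ and $\v := \v_\varepsilon(\cdot, \z)\u_0$, and set $\u_n := \v_n + \varepsilon\z_n$, $\u := \v + \varepsilon\z$. Then $\w_n := \v_n - \v$ solves
\begin{align*}
\frac{\d\w_n}{\d t} = -\mu\A\w_n - [\B(\u_n) - \B(\u)] - \beta[\mathcal{C}(\u_n) - \mathcal{C}(\u)] + \varepsilon\alpha(\z_n - \z) + (\f_n - \f),
\end{align*}
with initial error $\w_n(0) = (\u_0^n - \u_0) - \varepsilon(\z_n(0) - \z(0))$, which tends to $\mathbf{0}$ in $\H$ because $\u_0^n \to \u_0$ in $\H$ and $\z_n(0) \to \z(0)$ in $\V$ (the latter from the uniform-in-time convergence $\z_n \to \z$ in $\mathrm{L}^\infty(0,T;\V)$). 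Before estimating I would record the uniform a priori bounds $\sup_n\|\v_n\|_{\mathrm{L}^2(0,T;\V)} + \sup_n\|\v_n\|_{\mathrm{L}^\infty(0,T;\H)} < \infty$ (and $\sup_n\|\v_n\|_{\mathrm{L}^{r+1}(0,T;\widetilde{\L}^{r+1})} < \infty$), which follow from the energy estimate for \eqref{cscbf} together with the boundedness of the convergent data $\{\u_0^n\}, \{\z_n\}, \{\f_n\}$; these will make the Gronwall coefficient below uniformly integrable.

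Next I would pair the difference equation with $\w_n$. Using $\langle\A\w_n, \w_n\rangle = \|\w_n\|_\V^2$ and the decomposition $\w_n = (\u_n - \u) - \varepsilon(\z_n - \z)$, the damping contribution splits as $\langle\mathcal{C}(\u_n)-\mathcal{C}(\u), \w_n\rangle = \langle\mathcal{C}(\u_n)-\mathcal{C}(\u), \u_n-\u\rangle - \varepsilon\langle\mathcal{C}(\u_n)-\mathcal{C}(\u), \z_n-\z\rangle$, where the first term is nonnegative by the monotonicity \eqref{MO_c} and is retained on the dissipative side, giving
\begin{align*}
\frac12\frac{\d}{\d t}\|\w_n\|_\H^2 + \mu\|\w_n\|_\V^2 + \beta\langle\mathcal{C}(\u_n)-\mathcal{C}(\u), \u_n-\u\rangle \leq \abs{\langle\B(\u_n)-\B(\u), \w_n\rangle} + (\text{error terms}).
\end{align*}
For the convective term I would write $\B(\u_n)-\B(\u) = \B(\u_n-\u, \u_n) + \B(\u, \u_n-\u)$, invoke the cancellation $b(\u, \w_n, \w_n) = 0$ from \eqref{b0}, and estimate the surviving trilinear term $b(\w_n, \w_n, \u_n)$ by Ladyzhenskaya's inequality \eqref{lady}; after Young this produces a piece $\tfrac{\mu}{4}\|\w_n\|_\V^2$ absorbed on the left and a Gronwall piece $C\|\u_n\|_\V^2\|\w_n\|_\H^2$. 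The remaining pieces of the convective term each carry the factor $\z_n-\z$ and, after Young, are either absorbed or left as error terms of the form $C\|\z_n-\z\|_\V^2\|\u_n\|_\V^2$.

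To finish, the linear and forcing terms are handled by Cauchy--Schwarz and Young, contributing error terms $C\|\z_n-\z\|_\H^2$ and $C\|\f_n-\f\|_{\V'}^2$. Collecting everything and discarding the nonnegative monotone term yields the scalar inequality $\frac{\d}{\d t}\|\w_n\|_\H^2 + \mu\|\w_n\|_\V^2 \leq g_n(t)\|\w_n\|_\H^2 + h_n(t)$, where $g_n(t) = C\|\u_n\|_\V^2$ is bounded in $\mathrm{L}^1(0,T)$ uniformly in $n$, and $h_n \to 0$ in $\mathrm{L}^1(0,T)$ since every summand of $h_n$ contains one of $\z_n-\z$ or $\f_n-\f$ (vanishing in the stated norms) multiplied by a uniformly bounded factor. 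Gronwall's lemma then gives $\sup_{t\in[0,T]}\|\w_n(t)\|_\H^2 \to 0$, i.e. convergence in $\mathrm{C}([0,T];\H)$; integrating the inequality gives $\mu\int_0^T\|\w_n(t)\|_\V^2\d t \to 0$, i.e. convergence in $\mathrm{L}^2(0,T;\V)$, and evaluating at $t=T$ gives the last assertion.

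I expect the \textbf{main obstacle} to be the damping error term $\beta\varepsilon\abs{\langle\mathcal{C}(\u_n)-\mathcal{C}(\u), \z_n-\z\rangle}$ for large $r$, since the plain monotonicity \eqref{MO_c} only provides nonnegativity. Here I would use the pointwise bound $\abs{\abs{\u_n}^{r-1}\u_n - \abs{\u}^{r-1}\u} \leq C(\abs{\u_n}^{r-1} + \abs{\u}^{r-1})\abs{\u_n - \u}$ and split the weight $(\abs{\u_n}^{r-1}+\abs{\u}^{r-1})^{1/2}$ between the factors $\abs{\u_n-\u}$ and $\abs{\z_n-\z}$; a Cauchy--Schwarz/Young step then absorbs $\int_{\mathcal{O}}(\abs{\u_n}^{r-1}+\abs{\u}^{r-1})\abs{\u_n-\u}^2\d x$ into the monotone term on the left (using the quantitative form of the monotonicity available in \cite{MTM}), leaving the error $C\varepsilon^2(\|\u_n\|_{\widetilde{\L}^{r+1}}^{r-1}+\|\u\|_{\widetilde{\L}^{r+1}}^{r-1})\|\z_n-\z\|_{\widetilde{\L}^{r+1}}^2$. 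This integrates to $0$ because $\V\hookrightarrow\widetilde{\L}^{r+1}$ (by \eqref{Gen_lady}) forces $\|\z_n-\z\|_{\widetilde{\L}^{r+1}}\to0$ in $\mathrm{L}^\infty(0,T)$ while $\u_n$ remains bounded in $\mathrm{L}^{r+1}(0,T;\widetilde{\L}^{r+1})$; the $\mathrm{L}^2(0,T;\D(\A))$-convergence hypothesis on $\z_n$ provides the additional $\mathrm{L}^\infty(\mathcal{O})$-control of $\z_n,\z$ that makes these manipulations uniform.
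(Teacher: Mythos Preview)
Your approach is correct and is essentially the standard energy-estimate-plus-Gronwall argument that the paper employs throughout. The paper does not give a self-contained proof here but cites Theorem~4.8 of \cite{KM}; however, your outline matches the structure of the paper's own proof of the $\V$-level analogue (Theorem~\ref{RDS_Conti}) and, even more closely, the $\H$-level computation in Step~II of Theorem~\ref{USC}, where the same monotonicity splitting $\langle\mathcal{C}(\u_n)-\mathcal{C}(\u),\w_n\rangle=\langle\mathcal{C}(\u_n)-\mathcal{C}(\u),\u_n-\u\rangle-\varepsilon\langle\mathcal{C}(\u_n)-\mathcal{C}(\u),\z_n-\z\rangle$ is used. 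One minor difference: for the damping error term the paper (in Section~\ref{sec5}) simply bounds $|\langle\mathcal{C}(\u_n)-\mathcal{C}(\u),\z_n-\z\rangle|\le(\|\u_n\|_{\widetilde\L^{r+1}}^{r}+\|\u\|_{\widetilde\L^{r+1}}^{r})\|\z_n-\z\|_{\widetilde\L^{r+1}}$ directly, which already integrates to zero since $\u_n$ is bounded in $\mathrm{L}^{r+1}(0,T;\widetilde\L^{r+1})$ and $\z_n-\z\to 0$ in $\mathrm{L}^\infty(0,T;\V)\hookrightarrow\mathrm{L}^\infty(0,T;\widetilde\L^{r+1})$; your weighted Cauchy--Schwarz absorption into the quantitative monotone term works equally well but is more machinery than is strictly needed. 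The final remark about $\mathrm{L}^2(0,T;\D(\A))$ giving $\mathrm{L}^\infty(\mathcal{O})$-control is not actually required to close the argument.
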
	
	\begin{proof}
		See Theorem 4.8, \cite{KM}. 
	\end{proof}
	\begin{theorem}\label{RDS_Conti}
		Assume that, for some $T >0$ fixed, $\u_0^n \to \u_0$ in $\V$, 
		\begin{align*}
		\z_n \to \z\ \text{ in }\ \mathrm{L}^{\infty} (0, T; \V)\cap \mathrm{L}^2(0, T;\D(\A)),\ \ \f_n \to \f \ \text{ in }\ \mathrm{L}^2 (0, T; \H).
		\end{align*}
		Let us denote by $\v_\varepsilon(t, \z)\u_0$, the solution of the problem \eqref{cscbf} and by $\v_\varepsilon(t, \z_n)\u_0^n$, the solution of the problem \eqref{cscbf} with $\z, \f, \u_0$ being replaced by $\z_n, \f_n, \u_0^n$, respectively. Then $$\v_\varepsilon(\cdot, \z_n)\u_0^n \to \v_\varepsilon(\cdot, \z)\u_0 \ \text{ in } \ \mathrm{C}([0,T];\V)\cap\mathrm{L}^2 (0, T;\D(\A)).$$
		In particular, $\v_\varepsilon(T, \z_n)\u_0^n \to \v_\varepsilon(T, \z)\u_0$ in $\V$.
	\end{theorem}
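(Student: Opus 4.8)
The plan is to establish continuous dependence of the strong solution of \eqref{cscbf} on the triple $(\u_0,\z,\f)$ in the stronger topology $\mathrm{C}([0,T];\V)\cap\mathrm{L}^2(0,T;\D(\A))$, paralleling Theorem \ref{RDS_Conti1} but upgrading the functional framework from $(\H,\V)$ to $(\V,\D(\A))$. Writing $\v_\varepsilon = \v_\varepsilon(\cdot,\z)\u_0$ and $\v_\varepsilon^n = \v_\varepsilon(\cdot,\z_n)\u_0^n$, and setting $\w_n := \v_\varepsilon^n - \v_\varepsilon$ together with $\eta_n:=\z_n-\z$, I would subtract the two instances of \eqref{cscbf} to obtain an equation for $\w_n$, then test it against $\A\w_n$ in $\H$ (rather than against $\w_n$ in $\V$, which is what gives only the weaker convergence). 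This produces $\frac{1}{2}\frac{\d}{\d t}\|\w_n\|_{\V}^2 + \mu\|\A\w_n\|_{\H}^2$ on the left, and on the right the differences of the nonlinear terms $\B$ and $\mathcal{C}$, the forcing difference $\f_n-\f$, and the Ornstein-Uhlenbeck correction $\varepsilon\alpha(\z_n-\z)$, all paired with $\A\w_n$ in $\H$.

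The core of the estimate is to control the nonlinear contributions so that a Gronwall argument closes. For the bilinear term I would split $\B(\v_\varepsilon^n+\varepsilon\z_n)-\B(\v_\varepsilon+\varepsilon\z)$ into pieces that are either linear in $\w_n$ or linear in $\eta_n$, and estimate each pairing against $\A\w_n$ using the trilinear bound \eqref{b1}, namely $|b(\u,\v,\w)|\leq C\|\u\|_{\H}^{1/2}\|\u\|_{\V}^{1/2}\|\v\|_{\V}^{1/2}\|\A\v\|_{\H}^{1/2}\|\w\|_{\H}$, so that every resulting $\|\A(\cdot)\|_{\H}$ factor can be absorbed into the dissipation $\mu\|\A\w_n\|_{\H}^2$ via Young's inequality, leaving integrable-in-time prefactors built from the $\mathrm{L}^\infty(0,T;\V)\cap\mathrm{L}^2(0,T;\D(\A))$ bounds on $\v_\varepsilon^n,\v_\varepsilon,\z_n,\z$. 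For the monotone nonlinearity $\mathcal{C}$, the term $\langle\mathcal{C}(\v_\varepsilon^n+\varepsilon\z_n)-\mathcal{C}(\v_\varepsilon+\varepsilon\z),\A\w_n\rangle$ is not sign-definite (monotonicity \eqref{MO_c} only helps when testing against $\w_n$ itself), so I would instead use the local-Lipschitz structure of $\mathcal{C}'$ together with the regularity $\v_\varepsilon^n,\v_\varepsilon\in\mathrm{L}^2(0,T;\D(\A))$ and the embedding $\D(\A)\hookrightarrow\L^\infty$ in two dimensions to bound this pairing by (integrable prefactor)$\,\times(\|\w_n\|_{\V}^2+\|\eta_n\|_{\V}^2)$ plus a small multiple of $\|\A\w_n\|_{\H}^2$.

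After collecting terms, the differential inequality takes the schematic form $\frac{\d}{\d t}\|\w_n\|_{\V}^2 + \mu\|\A\w_n\|_{\H}^2 \leq g_n(t)\|\w_n\|_{\V}^2 + h_n(t)$, where $g_n\in\mathrm{L}^1(0,T)$ uniformly in $n$ (its integral is controlled by the a priori bounds that are uniform by the assumed convergence of the data) and $h_n(t)$ is a combination of $\|\eta_n\|_{\V}^2$, $\|\A\eta_n\|_{\H}^2$ and $\|\f_n-\f\|_{\H}^2$, which tends to $0$ in $\mathrm{L}^1(0,T)$ by hypothesis. An application of the Gronwall lemma, using $\|\w_n(0)\|_{\V}=\|\u_0^n-\u_0\|_{\V}\to 0$, then yields $\sup_{t\in[0,T]}\|\w_n(t)\|_{\V}^2\to 0$, and integrating back gives $\int_0^T\|\A\w_n(t)\|_{\H}^2\,\d t\to 0$; together these are exactly convergence in $\mathrm{C}([0,T];\V)\cap\mathrm{L}^2(0,T;\D(\A))$, and evaluation at $t=T$ gives the final assertion.

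The main obstacle I anticipate is the $\mathcal{C}$-term at the $\D(\A)$ level: since we are testing against $\A\w_n$ we lose the monotonicity that made the $\H$-level argument of Theorem \ref{RDS_Conti1} clean, so the estimate must instead rely on the higher regularity of the solutions and a careful pointwise bound $||\a|^{r-1}\a-|\b|^{r-1}\b|\leq C(|\a|^{r-1}+|\b|^{r-1})|\a-\b|$, combined with Gagliardo-Nirenberg/Sobolev embeddings in dimension two, to keep all prefactors time-integrable; verifying that these prefactors are genuinely in $\mathrm{L}^1(0,T)$ (and uniformly bounded along the sequence) for every $r\in[1,\infty)$ is the delicate point that the proof must handle.
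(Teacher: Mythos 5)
Your proposal follows essentially the same route as the paper's proof: the same decomposition $\y_n=\v_\varepsilon(\cdot,\z_n)\u_0^n-\v_\varepsilon(\cdot,\z)\u_0$ tested against $\A\y_n$ in $\H$, the trilinear estimates via \eqref{b1} with Young absorption into $\mu\|\A\y_n\|_{\H}^2$, a Taylor/local-Lipschitz treatment of the $\mathcal{C}$-difference controlled through Gagliardo--Nirenberg (the paper bounds the $\widetilde{\L}^{2r}$ norms by $\V$ norms, so the embedding $\D(\A)\hookrightarrow\L^\infty$ you mention is not even needed), and a Gronwall argument with an $\mathrm{L}^1$ prefactor and a vanishing source term. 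The delicate point you flag is handled in the paper exactly as you anticipate, so the proposal is correct and matches the paper's argument.
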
	
	\begin{proof}
		In order to simplify the proof, we introduce the following notations:
		$$\v_n (t) = \v_\varepsilon(t, \z_n)\u_0^n, \ \  \v(t) = \v_\varepsilon(t, \z)\u_0,\ \   \y_n (t)= \v_\varepsilon(t, \z_n)\u_0^n - \v_\varepsilon(t, \z)\u_0, \ \ \ t\in[0, T],$$     and 
		$$ \hat{ \z}_n = \z_n - \z, \ \  \hat{ \f}_n = \f_n - \f.$$
		It is easy to see that $\y_n$ solves the following initial value problem:
		\begin{equation}\label{cscbf_n}
		\left\{
		\begin{aligned}
		\frac{\d\y_n(t)}{\d t} &= -\mu \A\y_n (t) - \B(\v_n (t) + \varepsilon\z_n (t)) + \B(\v(t) + \varepsilon\z(t)) - \beta \mathcal{C}(\v_n (t) +\varepsilon \z_n (t)) \\& \ \ \ + \beta \mathcal{C}(\v(t) + \varepsilon\z(t)) +\varepsilon \alpha \hat{ \z}_n (t)+ \hat{ \f}, \\
		\y_n(0)&= \u_0^n - \u_0.
		\end{aligned}
		\right.
		\end{equation}
		Taking the inner product with $\A\y_n(\cdot)$ in the first equation in \eqref{cscbf_n}, and then using  \eqref{2.7a} and \eqref{b0}, we get 
		\begin{align}\label{Energy_esti_n_1}
		&	\frac{1}{2} \frac{\d}{\d t}\|\y_n (t)\|^2_{\V} \nonumber\\ &=-\mu \|\A\y_n(t)\|^2_{\H} -b(\y_n(t),\v_n(t)+\varepsilon\z_n(t),\A\y_n(t))-\varepsilon b(\hat{ \z}_n(t),\v_n(t)+\varepsilon\z_n(t),\A\y_n(t))\nonumber\\&\quad- b(\v(t)+\varepsilon\z(t),\y_n(t),\A\y_n(t))-\varepsilon b(\v(t)+\varepsilon\z(t),\hat{ \z}_n(t),\A\y_n(t))\nonumber\\&\quad-\beta(\mathcal{C}(\v_n(t)+\varepsilon\z_n(t))-\mathcal{C}(\v(t)+\varepsilon\z(t)), \A\y_n(t))+ \varepsilon\alpha \big(\hat{ \z}_n(t), \y_n(t)\big) + (\hat{ \f}(t),\y_n(t)), 
		\end{align}
		for a.e. $t\in[0,T]$. Now, using \eqref{poin}, \eqref{b1}, $0<\varepsilon\leq1$ and Young's inequality, we have
		\begin{align}
		\big|b(\y_n,\v_n+\varepsilon\z_n,\A\y_n)\big|&\leq C\|\y_n\|_{\V}\|\v_n+\varepsilon\z_n\|^{\frac{1}{2}}_{\V}\|\A\v_n+\varepsilon\A\z_n\|^{\frac{1}{2}}_{\H}\|\A\y_n\|_{\H}\nonumber\\&\leq\frac{\mu}{14}\|\A\y_n\|^{2}_{\H}+C\|\y_n\|^2_{\V}\|\v_n+\varepsilon\z_n\|_{\V}\|\A\v_n+\varepsilon\A\z_n\|_{\H},\label{Energy_esti_n_2}\\
		\big|\varepsilon b(\hat{ \z}_n,\v_n+\varepsilon\z_n,\A\y_n)\big|&\leq\big| b(\hat{ \z}_n,\v_n+\varepsilon\z_n,\A\y_n)\big|\nonumber\\&\leq C\|\hat{ \z}_n\|_{\V}\|\v_n+\varepsilon\z_n\|^{\frac{1}{2}}_{\V}\|\A\v_n+\varepsilon\A\z_n\|^{\frac{1}{2}}_{\H}\|\A\y_n\|_{\H}\nonumber\\&\leq\frac{\mu}{14}\|\A\y_n\|^{2}_{\H}+C\|\hat{ \z}_n\|^2_{\V}\|\v_n+\varepsilon\z_n\|_{\V}\|\A\v_n+\varepsilon\A\z_n\|_{\H},\\
		\big|b(\v+\varepsilon\z,\y_n,\A\y_n)\big|&\leq C\|\v+\varepsilon\z\|_{\V}\|\y_n\|^{\frac{1}{2}}_{\V}\|\A\y_n\|^{\frac{3}{2}}_{\H}\leq\frac{\mu}{14}\|\A\y_n\|^{2}_{\H}+C\|\v+\varepsilon\z\|^4_{\V}\|\y_n\|^{2}_{\V},\\
		\big|\varepsilon b(\v+\varepsilon\z,\hat{ \z}_n,\A\y_n)\big|&\leq\big| b(\v+\varepsilon\z,\hat{ \z}_n,\A\y_n)\big|\leq C\|\v+\varepsilon\z\|_{\V}\|\hat{ \z}_n\|^{\frac{1}{2}}_{\V}\|\A\hat{ \z}_n\|^{\frac{1}{2}}_{\H}\|\A\y_n\|_{\H}\nonumber\\ &\leq \frac{\mu}{14}\|\A\y_n\|^{2}_{\H} +C\|\v+\varepsilon\z\|^2_{\V}\|\hat{ \z}_n\|_{\V}\|\A\hat{ \z}_n\|_{\H}.
		\end{align}
		Now by using Taylor's formula and Gagliardo-Nirenberg's inequality, we find 
		\begin{align}
		&\beta\big|(\mathcal{C}(\v_n+\varepsilon\z_n)-\mathcal{C}(\v+\varepsilon\z), \A\y_n)\big|\nonumber\\&= \beta\bigg|\bigg(\int_{0}^{1} \big[\mathcal{C}'\big(\theta(\v_n+\varepsilon\z_n) + (1-\theta)(\v+\varepsilon\z)\big)\big((\v_n+\varepsilon\z_n)-(\v+\varepsilon\z)\big)\big] \d \theta , \A\y_n\bigg)\bigg|\nonumber\\&= \beta\bigg|\bigg( \int_{0}^{1}r\P_{\H}\big[|\big(\theta(\v_n+\varepsilon\z_n) + (1-\theta)(\v+\varepsilon\z)\big)|^{r-1}\big((\v_n+\varepsilon\z_n)-(\v+\varepsilon\z)\big)\big] \d\theta , \A\y_n\bigg)\bigg|\nonumber\\& \leq r\beta 2^{r-2} \|\v_n+\varepsilon\z_n\|^{r-1}_{\wi \L^{2r}} \|(\v_n+\varepsilon\z_n)-(\v+\varepsilon\z)\|_{\wi \L^{2r}} \|\A\y_n\|_{\H}\nonumber\\&\quad+r\beta 2^{r-2} \|\v+\varepsilon\z\|^{r-1}_{\wi \L^{2r}} \|(\v_n+\varepsilon\z_n)-(\v+\varepsilon\z)\|_{\wi \L^{2r}} \|\A\y_n\|_{\H}\nonumber\\& \leq r\beta C2^{r-2} \|\v_n+\varepsilon\z_n\|^{r-1}_{\V} \|\y_n+\varepsilon\hat{ \z}_n\|_{\V} \|\A\y_n\|_{\H}+r\beta C2^{r-2} \|\v+\varepsilon\z\|^{r-1}_{\V} \|\y_n+\varepsilon\hat{ \z}_n\|_{\V} \|\A\y_n\|_{\H}\nonumber\\ &\leq r\beta C2^{r-2} \|\v_n+\varepsilon\z_n\|^{r-1}_{\V} \|\y_n\|_{\V} \|\A\y_n\|_{\H}+ \varepsilon r\beta C2^{r-2} \|\v_n+\varepsilon\z_n\|^{r-1}_{\V} \|\hat{ \z}_n\|_{\V} \|\A\y_n\|_{\H}\nonumber\\ &\quad+ r\beta C2^{r-2} \|\v+\varepsilon\z\|^{r-1}_{\V} \|\y_n\|_{\V} \|\A\y_n\|_{\H} + \varepsilon r\beta C2^{r-2} \|\v+\varepsilon\z\|^{r-1}_{\V} \|\hat{ \z}_n\|_{\V} \|\A\y_n\|_{\H}\nonumber\\&\leq\frac{\mu}{14} \|\A\y_n\|^2_{\H} + C\big[\|\v_n+\varepsilon\z_n\|^{2r-2}_{\V} +\|\v+\varepsilon\z\|^{2r-2}_{\V}\big]\|\y_n\|^2_{\V}\nonumber\\&\quad+ C\big[\|\v_n+\z_n\|^{2r-2}_{\V} +\|\v+\varepsilon\z\|^{2r-2}_{\V}\big]\|\hat{ \z}_n\|^2_{\V}.
		\end{align}
		Using $0<\varepsilon\leq1$, H\"older's and Young's inequalities, we estimate
		\begin{align}
		\varepsilon\alpha\big\langle\hat{ \z}_n, \A\y_n\big\rangle &\leq \alpha \|\A\y_n\|_{\H} \|\hat{ \z}_n\|_{\H} \leq \frac{\mu }{14} \|\A\y_n\|^2_{\H} + C \|\hat{ \z}_n\|^2_{\H} \leq \frac{\mu}{14} \|\A\y_n\|^2_{\H} + C \|\hat{ \z}_n\|^2_{\V},\\
		\big\langle\hat{ \f}_n, \A\y_n\big\rangle &\leq  \|\A\y_n\|_{\H} \|\hat{ \f}_n\|_{\H} \leq \frac{\mu}{14} \|\A\y_n\|^2_{\H} + C \|\hat{ \f}_n\|^2_{\H}.\label{Energy_esti_n_3}
		\end{align}
		Combining \eqref{Energy_esti_n_2}-\eqref{Energy_esti_n_3} and using it in \eqref{Energy_esti_n_1}, we obtain 
		\begin{align*}
		\frac{\d}{\d t}\|\y_n (t)\|^2_{\V} &+ \mu \|\A\y_n(t)\|^2_{\H}\nonumber\\ \leq & C\bigg[\|\v_n(t)+\varepsilon\z_n(t)\|_{\V}\|\A\v_n(t)+\varepsilon\A\z_n(t)\|_{\H}+ \|\v_n(t)+\varepsilon\z_n(t)\|^{2r-2}_{\V} \nonumber\\&+\|\v(t)+\varepsilon\z(t)\|^{2r-2}_{\V}+\|\v(t)+\varepsilon\z(t)\|^4_{\V}\bigg]\|\y_n(t)\|^2_{\V}\nonumber\\&+ C\bigg[\|\v_n(t)+\varepsilon\z_n(t)\|_{\V}\|\A\v_n(t)+\varepsilon\A\z_n(t)\|_{\H} +\|\v(t)+\varepsilon\z(t)\|^{2r-2}_{\V}\nonumber\\&+\|\v_n(t)+\varepsilon\z_n(t)\|^{2r-2}_{\V} +1\bigg]\|\hat{ \z}_n(t)\|^2_{\V}+  C \|\hat{ \f}_n(t)\|^2_{\H}\nonumber\\&+C\|\v(t)+\varepsilon\z(t)\|^2_{\V}\|\hat{ \z}_n(t)\|_{\V}\|\A\hat{ \z}_n(t)\|_{\H}, 
		\end{align*}
		for a.e. $t\in[0,T]$. Now integrating from $0$ to $t$ to the above inequality, we get
		\begin{align}\label{324}
		\|\y_n(t)\|^2_{\V}+\mu\int_{0}^{t} \|\A\y_n(s)\|^2_{\H}\d s\leq \|\y_n(0)\|^2_{\V} +C\int_{0}^{t} \upalpha_n(s)\|\y_n(s)\|^2_{\V}\d s + C \int_{0}^{t} \upbeta_n(s)\d s,
		\end{align}
		for all $t\in [0,T],$ where
		\begin{align*}
		\upalpha_n&=\|\v_n+\varepsilon\z_n\|_{\V}\|\A\v_n+\varepsilon\A\z_n\|_{\H}+ \|\v_n+\varepsilon\z_n\|^{2r-2}_{\V} +\|\v+\varepsilon\z\|^{2r-2}_{\V}+\|\v+\varepsilon\z\|^4_{\V},\nonumber\\
		\upbeta_n&= 	\Big[\|\v_n+\varepsilon\z_n\|_{\V}\|\A\v_n+\varepsilon\A\z_n\|_{\H} +\|\v+\varepsilon\z\|^{2r-2}_{\V}+\|\v_n+\varepsilon\z_n\|^{2r-2}_{\V} +1\Big]\|\hat{ \z}_n\|^2_{\V}\nonumber\\&\quad+   \|\hat{ \f}_n\|^2_{\H}+\|\v+\varepsilon\z\|^2_{\V}\|\hat{ \z}_n\|_{\V}\|\A\hat{ \z}_n\|_{\H}.
		\end{align*}
		Then by an application of Gronwall's inequality, we obtain 
		\begin{align}\label{Energy_esti_n_4}
		\|\y_n(t)\|^2_{\V}&\leq \bigg(\|\y_n(0)\|^2_{\V}+ C \int_{0}^{T} \upbeta_n(s) \d s\bigg) e^{C\int_{0}^{T} \upalpha_n (s) \d s},
		\end{align}
		for all $t\in [0,T].$  On the other hand, we find
		\begin{align*}
		\int_{0}^{T} \upbeta_n(s) \d s&\leq T^{\frac{1}{2}}\|\v_n+\varepsilon\z_n\|_{\mathrm{L}^{\infty}(0,T;\V)}\|\v_n+\varepsilon\z_n\|_{\mathrm{L}^2(0,T;\D(\A))}\|\hat{ \z}_n\|^2_{\mathrm{L}^{\infty}(0,T;\V)} \\&+T\bigg[\|\v+\varepsilon\z\|^{2r-2}_{\mathrm{L}^{\infty}(0,T;\V)}+\|\v_n+\varepsilon\z_n\|^{2r-2}_{\mathrm{L}^{\infty}(0,T;\V)} +1\bigg]\|\hat{ \z}_n\|^2_{\mathrm{L}^{\infty}(0,T;\V)}+   \|\hat{ \f}_n\|^2_{\mathrm{L}^2(0,T; \H)}\\&+T^{\frac{1}{2}}\|\v+\varepsilon\z\|^2_{\mathrm{L}^{\infty}(0,T;\V)}\|\hat{ \z}_n\|_{\mathrm{L}^{\infty}(0,T;\V)}\|\hat{ \z}_n\|_{\mathrm{L}^2(0,T;\D(\A))}.
		\end{align*}
		Hence $\int_{0}^{T} \upbeta_n(s) \d s\to 0$ as $n\to \infty.$ Moreover, we have 
		\begin{align*}
		\int_{0}^{T}\upalpha_n(s)\d s&= T^{\frac{1}{2}}\|\v_n+\varepsilon\z_n\|_{\mathrm{L}^{\infty}(0,T;\V)}\|\v_n+\varepsilon\z_n\|_{\mathrm{L}^2(0,T;\D(\A))}+ T\|\v_n+\varepsilon\z_n\|^{2r-2}_{\mathrm{L}^{\infty}(0,T;\V)} \\&\quad+T\|\v+\varepsilon\z\|^{2r-2}_{\mathrm{L}^{\infty}(0,T;\V)}+T\|\v+\varepsilon\z\|^4_{\mathrm{L}^{\infty}(0,T;\V)}
		<\infty.
		\end{align*}
		Since, $\|\y_n(0)\|_{\V}=\|\u_0^n-\u_0\|_{\V}\to 0$ and $\int_{0}^{T} \upbeta_n(s) \d s\to 0$ as $n\to \infty$, and for all $n\in \mathbb{N}, \int_{0}^{T}\upalpha_n(s)\d s<\infty,$ then \eqref{Energy_esti_n_4} asserts that $\|\y_n(t)\|_{\V}\to 0$ as $n\to \infty$ uniformly in $t\in[0,T].$ Since $\v_n(\cdot)$ and $\v(\cdot)$ are continuous in $\V$, we also have $$\v_\varepsilon(\cdot, \z_n)\u_0^n \to \v_\varepsilon(\cdot, \z)\u_0\ \text{ in } \ \mathrm{C}([0,T];\V).$$ 
		From  \eqref{324}, we also infer that 
		\begin{align*}
		\mu \int_{0}^{T} \|\A\y_n(t)\|^2_{\H}&\leq \|\y_n(0)\|^2_{\V} + C\sup_{s\in[0,T]}\|\y_n(s)\|^2_{\V}\int_{0}^{T} \upalpha_n(s) \d s + C \int_{0}^{T} \upbeta_n(s)\ \d s\to 0,
		\end{align*}
		as $n\to \infty$ and therefore $$\v_\varepsilon(\cdot, \z_n)\u_0^n \to \v_\varepsilon(\cdot, \z)\u_0\  \text{ in } \ \mathrm{L}^2(0, T; \D(\A)),$$
		which completes the proof. 
	\end{proof}
	
	\begin{definition}
		We define a map $\varphi^{\alpha}_\varepsilon : \mathbb{R}^+ \times \Omega \times \V \to \V$ by
		\begin{align}
		(t, \omega, \u_0) \mapsto \v^{\alpha}_\varepsilon(t)  + \z_{\alpha}(\omega)(t) \in \V,
		\end{align}
		where $\v^{\alpha}_\varepsilon(t) = \v_\varepsilon(t, \z_{\alpha}(\omega)(t))(\u_0 - \z_{\alpha}(\omega)(0))$ is a solution to the problem \eqref{cscbf} with the initial condition $\u_0 - \z_{\alpha}(\omega)(0).$
	\end{definition}
	\begin{proposition}[Proposition 4.11, \cite{KM}]\label{alpha_ind}
		If $\alpha_1, \alpha_2 \geq 0$, then $\varphi^{\alpha_1}_\varepsilon = \varphi^{\alpha_2}_\varepsilon.$
	\end{proposition}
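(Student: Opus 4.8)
The plan is to show that, for every $\alpha\geq0$, the value $\varphi^{\alpha}_\varepsilon(t,\omega,\u_0)$ reconstructs the unique solution of the original SCBF system \eqref{S-CBF} issuing from $\u_0$, an object that contains no reference to $\alpha$. Since both $\varphi^{\alpha_1}_\varepsilon$ and $\varphi^{\alpha_2}_\varepsilon$ would then equal this same $\alpha$-free object, the claimed equality $\varphi^{\alpha_1}_\varepsilon=\varphi^{\alpha_2}_\varepsilon$ follows at once. The whole argument is thus a combination of an algebraic cancellation with the pathwise uniqueness theorem of \cite{MTM}.

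Concretely, I would fix $\alpha\geq0$ and set $\u^{\alpha}_\varepsilon(t):=\varphi^{\alpha}_\varepsilon(t,\omega,\u_0)=\v^{\alpha}_\varepsilon(t)+\varepsilon\z_{\alpha}(\omega)(t)$, where $\v^{\alpha}_\varepsilon$ solves \eqref{cscbf} with datum $\u_0-\varepsilon\z_{\alpha}(\omega)(0)$ and $\z_{\alpha}$ solves the Ornstein--Uhlenbeck equation \eqref{OUPe}. Adding \eqref{cscbf} to $\varepsilon$ times \eqref{OUPe}, the $\alpha$-dependent contributions cancel in pairs: the second-order term $-\varepsilon\mu\A\z_{\alpha}$ supplied by \eqref{OUPe} combines with $-\mu\A\v^{\alpha}_\varepsilon$ to give $-\mu\A\u^{\alpha}_\varepsilon$, while the zeroth-order terms $+\varepsilon\alpha\z_{\alpha}$ coming from \eqref{cscbf} and $-\varepsilon\alpha\z_{\alpha}$ coming from \eqref{OUPe} annihilate each other. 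Since $\B$ and $\mathcal{C}$ in \eqref{cscbf} are already evaluated at $\v^{\alpha}_\varepsilon+\varepsilon\z_{\alpha}=\u^{\alpha}_\varepsilon$, what remains is precisely
\begin{equation*}
\d\u^{\alpha}_\varepsilon+\left[\mu\A\u^{\alpha}_\varepsilon+\B(\u^{\alpha}_\varepsilon)+\beta\mathcal{C}(\u^{\alpha}_\varepsilon)\right]\d t=\f\d t+\varepsilon\d\mathrm{W}(t),
\end{equation*}
with the same forcing $\f$ and the same additive noise $\varepsilon\d\mathrm{W}$, and none of these terms depends on $\alpha$. The initial condition is equally $\alpha$-free, as $\u^{\alpha}_\varepsilon(0)=\bigl(\u_0-\varepsilon\z_{\alpha}(\omega)(0)\bigr)+\varepsilon\z_{\alpha}(\omega)(0)=\u_0$.

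Consequently, for any $\alpha_1,\alpha_2\geq0$ the two processes $\u^{\alpha_1}_\varepsilon$ and $\u^{\alpha_2}_\varepsilon$ are both solutions of \eqref{S-CBF} in the sense of Definition \ref{def3.2} with the identical datum $\u_0$. Invoking the pathwise uniqueness of strong solutions from the theorem of \cite{MTM}, I conclude $\u^{\alpha_1}_\varepsilon(t)=\u^{\alpha_2}_\varepsilon(t)$ for all $t\geq0$, $\mathbb{P}$-a.s., which is exactly the assertion $\varphi^{\alpha_1}_\varepsilon=\varphi^{\alpha_2}_\varepsilon$.

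The only place demanding genuine care is not the algebra but the bookkeeping of function spaces and solution concepts: one must verify that the reconstructed process $\u^{\alpha}_\varepsilon$ really qualifies as a strong solution of \eqref{S-CBF} in the sense of Definition \ref{def3.2}, so that the uniqueness theorem is applicable. This is where Assumption \ref{assump}, together with the regularity $\z_{\alpha}\in\mathrm{L}^{q}(a,b;\mathrm{X})$ for $\mathrm{X}=\V\cap\H^2(\mathcal{O})$ recorded in Remark \ref{stationary} and \eqref{O-U_conti}, is essential: it guarantees that $\varepsilon\z_{\alpha}$ is smooth enough that adding it to $\v^{\alpha}_\varepsilon$ returns a process in the correct regularity class and that \eqref{OUPe} legitimately converts the deterministic driving term of \eqref{cscbf} back into the It\^o differential $\varepsilon\d\mathrm{W}$. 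Once this identification is secured, the cancellation described above is purely pointwise and the proof closes.
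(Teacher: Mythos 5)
Your argument is correct, and the key algebraic point — that $\v^{\alpha}_\varepsilon+\varepsilon\z_{\alpha}$ satisfies an equation in which every $\alpha$-dependent term has cancelled — is exactly the reason the proposition is true. Note, however, that the paper itself offers no proof here: it simply cites Proposition 4.11 of \cite{KM}, and the argument given there (following Proposition 6.12 of \cite{BL}) runs at a different level from yours. Rather than reassembling the It\^o equation \eqref{S-CBF} and invoking the stochastic uniqueness theorem of \cite{MTM}, that proof stays entirely pathwise: one observes that $\z_{\alpha_1}-\z_{\alpha_2}$ solves the \emph{deterministic} equation $\frac{\d}{\d t}(\z_{\alpha_1}-\z_{\alpha_2})+\mu\A(\z_{\alpha_1}-\z_{\alpha_2})=-\alpha_1\z_{\alpha_1}+\alpha_2\z_{\alpha_2}$ (the noise cancels in the difference), checks that $\v^{\alpha_1}_\varepsilon+\varepsilon\z_{\alpha_1}-\varepsilon\z_{\alpha_2}$ is then a solution of the $\alpha_2$-version of \eqref{cscbf} with datum $\u_0-\varepsilon\z_{\alpha_2}(0)$, and concludes from uniqueness of the transformed random PDE \eqref{cscbf} for fixed $\omega$. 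The trade-off is real: your route is conceptually cleaner (both cocycles are identified with one $\alpha$-free object), but it obliges you to verify that the reconstructed process is an adapted strong solution in the sense of Definition \ref{def3.2} — i.e.\ that the pathwise integral defining $\z_\alpha$ in \eqref{DOu} really coincides with the It\^o solution of \eqref{OUPe}, which is supplied by Proposition \ref{SOUP}, and that the regularity of $\z_\alpha$ from \eqref{O-U_conti} places $\v^{\alpha}_\varepsilon+\varepsilon\z_\alpha$ in the solution class — a point you correctly flag but should carry out explicitly before appealing to pathwise uniqueness. The pathwise route needs only uniqueness for \eqref{cscbf}, which is already established $\omega$-by-$\omega$, and so avoids any stochastic bookkeeping.
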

	Proposition \ref{alpha_ind} says that the map $\varphi^{\alpha}_{\varepsilon}$ does not depend on $\alpha$ and hence from now on, we will denote it  by $\varphi_{\varepsilon}$.
	\begin{theorem}
		$(\varphi, \theta)$ is an RDS.
	\end{theorem}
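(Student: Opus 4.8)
The plan is to verify directly the three defining properties of a measurable RDS from the definition in Section~\ref{sec2}: joint measurability, the $\theta$-cocycle property, and continuity of the map $\u_0\mapsto\varphi_\varepsilon(t,\omega)\u_0$. By Proposition~\ref{alpha_ind} the cocycle $\varphi_\varepsilon$ is independent of $\alpha$, so I may fix any convenient $\alpha\geq 0$ and work throughout with the representation $\varphi_\varepsilon(t,\omega,\u_0)=\v^{\alpha}_\varepsilon(t)+\varepsilon\z_\alpha(\omega)(t)$, where $\v^{\alpha}_\varepsilon(\cdot)$ is the unique solution of \eqref{cscbf} driven by $\z_\alpha(\omega)$ and $\f$ with initial value $\v^{\alpha}_\varepsilon(0)=\u_0-\varepsilon\z_\alpha(\omega)(0)$. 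The continuity property (iii) is the most immediate: fixing $\omega$ and $t$ and taking $\z_n=\z=\z_\alpha(\omega)$, $\f_n=\f$ in Theorem~\ref{RDS_Conti}, its conclusion $\v_\varepsilon(\cdot,\z)\u_0^n\to\v_\varepsilon(\cdot,\z)\u_0$ in $\mathrm{C}([0,T];\V)$ shows at once that $\u_0\mapsto\varphi_\varepsilon(t,\omega)\u_0$ is continuous on $\V$; evaluating at $t=0$ also gives $\varphi_\varepsilon(0,\omega,\u_0)=\u_0$.

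Next I would establish the cocycle identity (ii), which is the structurally decisive step. The key ingredient is the stationarity relation from Remark~\ref{stationary}, namely $\z_\alpha(\theta_s\omega)(t)=\z_\alpha(\omega)(t+s)$, combined with pathwise uniqueness for \eqref{cscbf}. Fix $s\geq 0$ and $\u_0\in\V$, and set $\w(t):=\v^{\alpha}_\varepsilon(t+s)$ for the solution started from $\u_0$ under $\omega$. Differentiating in $t$ and substituting $\z_\alpha(\omega)(t+s)=\z_\alpha(\theta_s\omega)(t)$ shows that $\w$ solves \eqref{cscbf} with $\omega$ replaced by $\theta_s\omega$, with initial value $\w(0)=\v^{\alpha}_\varepsilon(s)$. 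By uniqueness, $\w$ coincides with the $\v$-part of the solution of \eqref{cscbf} under $\theta_s\omega$ whose initial datum is $\w(0)+\varepsilon\z_\alpha(\theta_s\omega)(0)=\v^{\alpha}_\varepsilon(s)+\varepsilon\z_\alpha(\omega)(s)=\varphi_\varepsilon(s,\omega,\u_0)$, where I used $\z_\alpha(\theta_s\omega)(0)=\z_\alpha(\omega)(s)$. Adding back $\varepsilon\z_\alpha(\omega)(t+s)=\varepsilon\z_\alpha(\theta_s\omega)(t)$ then yields $\varphi_\varepsilon(t+s,\omega,\u_0)=\varphi_\varepsilon(t,\theta_s\omega,\varphi_\varepsilon(s,\omega,\u_0))$, which is precisely the $\theta$-cocycle property.

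Finally, for the measurability (i), I would argue that $\varphi_\varepsilon$ is a Carath\'eodory map and is therefore jointly Borel measurable. Indeed, $\varphi_\varepsilon$ is continuous in $(t,\u_0)$: continuity in $\u_0$ is property (iii), while continuity in $t$ follows from the $\mathrm{C}([0,T];\V)$-regularity of both $\v^{\alpha}_\varepsilon$ and $\z_\alpha(\omega)$. It is also $\mathscr{F}$-measurable in $\omega$, which reduces to measurability of $\omega\mapsto\z_\alpha(\omega)\in\mathrm{L}^{\infty}(0,T;\V)\cap\mathrm{L}^2(0,T;\D(\A))$, guaranteed by the construction of the Ornstein--Uhlenbeck process (Proposition~\ref{SOUP} and Remark~\ref{stationary}), composed with the continuous---hence measurable---dependence $\z\mapsto\v_\varepsilon(\cdot,\z)\u_0$ furnished by Theorem~\ref{RDS_Conti}. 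Since $\V$ is separable, a map that is continuous in part of its arguments and measurable in the remaining one is jointly measurable with respect to the product $\sigma$-algebra, giving the required $(\mathcal{B}(\R^+)\otimes\mathscr{F}\otimes\mathcal{B}(\V),\mathcal{B}(\V))$-measurability.

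I expect the measurability verification to be the main technical obstacle, as it is the property least directly supplied by the earlier results: Theorems~\ref{RDS_Conti1} and \ref{RDS_Conti} do essentially all the analytic work for continuity and for the uniqueness underlying the cocycle identity, but upgrading joint path-continuity together with $\omega$-measurability to genuine joint measurability requires the standard---though not wholly automatic---Carath\'eodory argument together with separability of $\V$.
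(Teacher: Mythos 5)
Your proposal is correct and follows essentially the same route as the paper: the paper likewise derives continuity (and measurability) from Theorem \ref{RDS_Conti} and establishes the cocycle identity by the standard argument via stationarity of the Ornstein--Uhlenbeck process and pathwise uniqueness, referring to Theorem 6.15 of \cite{BL} for the details you have written out. Your version simply makes explicit the steps the paper omits, including the Carath\'eodory measurability argument.
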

	\begin{proof}
		All the properties with the exception of the cocycle one of an RDS follow from Theorem \ref{RDS_Conti}. Hence we only need to show that for any $\u_0\in \V,$
		\begin{align}\label{Cocy..}
		\varphi(t+s, \omega)\u_0 = \varphi(t, \theta_s \omega)\varphi(s, \omega)\u_0, \ \ t, s \in \R^+. 
		\end{align}
		Remaining proof of the this theorem is similar to that of Theorem 6.15, \cite{BL} and hence we omit it here.
	\end{proof}
	Let us now define, for $\u_0 \in \V,\ \omega \in \Omega,$ and $t\geq s,$
	\begin{align}\label{combine_sol}
	\u(t, s;\omega, \u_0) := \varphi_\varepsilon(t-s; \theta_s \omega)\u_0 = \v\big(t, s; \omega, \u_0 - \z(s)\big) + \z(t),
	\end{align}
	then for each $s\in \mathbb{R}$ and each $\u_0 \in \V,$ the process $\u(t), \ t\geq s,$ is a solution to problem \eqref{S-CBF}.
	\section{Random attractors for 2D SCBF equations in $\H$}\label{sec4}\setcounter{equation}{0}
	The existence of random attractors in $\H$ for the 2D SCBF equations is established in this section. We consider the RDS $\varphi_{\varepsilon}$ over the metric DS $(\Omega, \hat{\mathscr{F}}, \hat{\mathbb{P}}, \hat{\theta})$.
	
	\begin{lemma}\label{Bddns4}
		For each $\omega\in \Omega,$ 
		\begin{align*}
		\limsup_{t\to - \infty} \|\z(\omega)(t)\|^2_{\H}\  e^{\mu \lambda_1 t +\frac{8}{ \mu} \int_{t}^{0}\|\z(\zeta)\|^{2}_{\V}\d\zeta} = 0.
		\end{align*}
	\end{lemma}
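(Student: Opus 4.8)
The plan is to exploit the competition, as $t\to-\infty$, between the exponentially decaying factor coming from $e^{\mu\lambda_1 t}$ and the at most polynomial growth of the stationary Ornstein--Uhlenbeck process $\z(\omega)(\cdot)$, using Corollary \ref{Bddns1} to absorb the integral term $\frac{8}{\mu}\int_{t}^{0}\|\z(\zeta)\|^2_{\V}\d\zeta$ into a fraction of $\mu\lambda_1|t|$. No new estimate is required; everything is assembled from results already established.

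First I would rewrite the exponent. For $t<0$ with $-t\geq t_0(\omega)$, inequality \eqref{Bddns2} of Corollary \ref{Bddns1}, applied with its (positive) time variable equal to $-t$, gives
\[
\frac{8}{\mu}\int_{t}^{0}\|\z(\zeta)\|^2_{\V}\d\zeta \leq -\frac{\mu\lambda_1 t}{2}.
\]
Adding $\mu\lambda_1 t$ to both sides, the full exponent is bounded above by $\frac{\mu\lambda_1 t}{2}$, so that
\[
e^{\mu\lambda_1 t + \frac{8}{\mu}\int_{t}^{0}\|\z(\zeta)\|^2_{\V}\d\zeta}\leq e^{\mu\lambda_1 t/2},
\]
which tends to $0$ as $t\to-\infty$ since $\mu,\lambda_1>0$.

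Next I would control the prefactor $\|\z(\omega)(t)\|^2_{\H}$. For each fixed $\omega\in\Omega$ the process $\z(\omega)$ belongs to $\C_{1/2}(\R,\mathrm{X})$, so by the definition of that norm, $\|\z(\omega)(t)\|_{\mathrm{X}}\leq (1+|t|^{1/2})\,\|\z(\omega)\|_{\C_{1/2}(\R,\mathrm{X})}$; combining this with the continuous (indeed contractive) embedding $\mathrm{X}=\V\cap\H^{2}(\mathcal{O})\hookrightarrow\H$ yields a bound of the form $\|\z(\omega)(t)\|^2_{\H}\leq C_\omega(1+|t|^{1/2})^2\leq C_\omega(1+|t|)$, i.e. polynomial growth in $|t|$, with $C_\omega<\infty$ for each $\omega$.

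Finally, combining the two bounds gives, for $-t\geq t_0(\omega)$,
\[
\|\z(\omega)(t)\|^2_{\H}\, e^{\mu\lambda_1 t + \frac{8}{\mu}\int_{t}^{0}\|\z(\zeta)\|^2_{\V}\d\zeta}\leq C_\omega(1+|t|)\,e^{\mu\lambda_1 t/2}.
\]
A polynomial multiplied by an exponentially decaying factor tends to $0$, so the right-hand side converges to $0$ as $t\to-\infty$, and since the quantity on the left is nonnegative its $\limsup$ equals $0$, which is the claim. The only point that requires any care is the cancellation in the exponent together with the sublinear growth of the Ornstein--Uhlenbeck process; both are immediate consequences of Corollary \ref{Bddns1} and of the regularity $\z\in\C_{1/2}(\R,\mathrm{X})$, so the argument is essentially mechanical once these are invoked.
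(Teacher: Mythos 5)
Your proposal is correct and follows essentially the same route as the paper: both use Corollary \ref{Bddns1} to dominate the exponent by $\mu\lambda_1 t/2$ and then control $\|\z(\omega)(t)\|_{\H}^2$ by a polynomially growing bound (the paper via Poincar\'e and the linear-growth estimate $\|\z(t)\|_{\V}\le\rho_1|t|$ from Proposition 2.11 of \cite{BCLLLR}, you via the equivalent $(1+|t|^{1/2})$ growth encoded in $\z(\omega)\in\C_{1/2}(\R,\mathrm{X})$ and the embedding $\mathrm{X}\hookrightarrow\H$), concluding by the decay of a polynomial times $e^{\mu\lambda_1 t/2}$ as $t\to-\infty$.
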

	\begin{proof}
		Let us fix $\omega\in \Omega$. By Corollary \ref{Bddns1}, we can find $t_0\leq 0$ such that for $t\leq t_0,$
		\begin{align}\label{Bddns3}
		\frac{8}{\mu} \int_{t}^{0} \|\z(s)\|^{2}_{\V} \d s < - \frac{\mu \lambda_1 t}{2}, \ \ \ t\leq t_0.
		\end{align}
		Since $\A$ is the generator of an analytic semigroup on $\V$, one can apply Proposition 2.11, \cite{BCLLLR}  find $\rho_1= \rho_1(\omega)\geq 0$ such that
		\begin{align}\label{rho}
		\frac{\|\z(t)\|_{\V}}{|t|} \leq \rho_1,  \  \text{ for } \  t\leq t_0.
		\end{align}
		Therefore, we have, for every $\omega\in \Omega,$
		\begin{align*}
		\limsup_{t\to - \infty} \|\z(\omega)(t)\|^2_{\H}\  e^{\mu \lambda_1 t +\frac{8}{\mu} \int_{t}^{0}\|\z(\zeta)\|^{2}_{\V}\d\zeta}\leq&\limsup_{t\to - \infty} \frac{1}{\lambda_1^2}\|\z(\omega)(t)\|^2_{\V}\  e^{\mu \lambda_1 t +\frac{8}{ \mu} \int_{t}^{0}\|\z(\zeta)\|^{2}_{\V}\d\zeta}\\\leq& \frac{\rho_1^2}{\lambda_1^2} \limsup_{t\to - \infty}  |t|^2 e^{\frac{\mu \lambda_1 t}{2} }
		=  0,
		\end{align*}
		which completes the proof.
	\end{proof}
	\begin{lemma}\label{Bddns5}
		For each $\omega\in \Omega,$
		\begin{align*}
		\int_{- \infty}^{0} \bigg\{ 1 + \|\z(t)\|^{2}_{\V} + \|\z(s)\|^{r+1}_{\V} + \|\z(t)\|^4_{\V}  \bigg\}e^{\mu \lambda_1 t +\frac{8}{\mu} \int_{t}^{0}\|\z(\zeta)\|^{4}_{\V}\d\zeta} \d t < \infty.
		\end{align*}
	\end{lemma}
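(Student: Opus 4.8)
The plan is to fix $\omega\in\Omega$ and split the integral as $\int_{-\infty}^{0}=\int_{-\infty}^{t_0}+\int_{t_0}^{0}$, where $t_0=t_0(\omega)\le 0$ is the time furnished by Corollary \ref{Bddns1}. On the bounded interval $[t_0,0]$ there is essentially nothing to do: by Remark \ref{stationary} the map $t\mapsto\|\z(t)\|_{\V}$ lies in $\mathrm{L}^{q}(t_0,0)$ for every $q$, while on a compact interval the exponential weight is bounded above by a constant, so the contribution of $\int_{t_0}^{0}$ is finite. Thus the whole question reduces to the behaviour of the integrand as $t\to-\infty$ (and I read the misprinted $\|\z(s)\|^{r+1}_{\V}$ in the integrand as $\|\z(t)\|^{r+1}_{\V}$).

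For the tail $\int_{-\infty}^{t_0}$ I would handle the polynomial prefactor and the exponential weight separately. Since $\A$ generates an analytic semigroup, Proposition 2.11 of \cite{BCLLLR}, as already invoked in \eqref{rho}, supplies a constant $\rho_1=\rho_1(\omega)\ge 0$ with $\|\z(t)\|_{\V}\le\rho_1|t|$ for $t\le t_0$; hence each term of the prefactor is dominated by a power of $|t|$, so that $1+\|\z(t)\|^{2}_{\V}+\|\z(t)\|^{r+1}_{\V}+\|\z(t)\|^{4}_{\V}\le C(1+|t|^{m})$ with $m=\max\{2,r+1,4\}$. For the exponential weight the point is to show that the integral in the exponent grows at most \emph{linearly} in $|t|$ with a rate strictly below $\mu\lambda_1$, so that the exponent is $\le\tfrac12\mu\lambda_1 t$ for all sufficiently negative $t$. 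Granting this, the tail integrand is bounded by $C(1+|t|^{m})e^{\mu\lambda_1 t/2}$, and since exponential decay beats any polynomial, $\int_{-\infty}^{t_0}C(1+|t|^{m})e^{\mu\lambda_1 t/2}\d t<\infty$, which closes the argument.

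The crux, and the step I expect to be the main obstacle, is the linear control of $\frac{8}{\mu}\int_{t}^{0}\|\z(\zeta)\|^{4}_{\V}\d\zeta$: note that the pathwise bound $\|\z(\zeta)\|_{\V}\le\rho_1|\zeta|$ would only yield growth of order $|t|^{5}$ and is useless here. Instead I would exploit the stationarity and ergodicity of the Ornstein--Uhlenbeck process exactly as in \eqref{SLLN}: Birkhoff's theorem applied to the observable $\|\cdot\|^{4}_{\mathrm{X}}$ gives $\frac{1}{t}\int_{-t}^{0}\|\z(s)\|^{4}_{\mathrm{X}}\d s\to\mathbb{E}[\|\z(0)\|^{4}_{\mathrm{X}}]$, the expectation being finite because $\z(0)$ is Gaussian. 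Since $\mathbb{E}[\|\z_{\alpha}(0)\|^{4}_{\mathrm{X}}]\to 0$ as $\alpha\to\infty$ by Proposition \ref{SOUP}, and since $\varphi_{\varepsilon}$ is independent of $\alpha$ by Proposition \ref{alpha_ind}, I may fix $\alpha$ so large that $\frac{8}{\mu}\mathbb{E}[\|\z(0)\|^{4}_{\mathrm{X}}]<\tfrac12\mu\lambda_1$; combined with the contraction $\mathrm{X}\hookrightarrow\V$ recorded in Corollary \ref{Bddns1}, this produces (after enlarging $t_0(\omega)$ if necessary) the estimate $\frac{8}{\mu}\int_{t}^{0}\|\z(\zeta)\|^{4}_{\V}\d\zeta\le-\tfrac12\mu\lambda_1 t$ for $t\le t_0$, which is precisely the linear control required above. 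This ergodic step, rather than the pathwise polynomial bound, is what makes the weight integrable, and it is exactly the quartic analogue of how Corollary \ref{Bddns1} was obtained for the quadratic weight of Lemma \ref{Bddns4}.
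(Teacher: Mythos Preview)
Your overall plan---split at $t_0$, handle $[t_0,0]$ by local integrability, and on $(-\infty,t_0]$ bound the prefactor polynomially via \eqref{rho} while reducing the exponential weight to $e^{\mu\lambda_1 t/2}$---is exactly the paper's argument. The divergence lies entirely in how you treat the exponential weight, and this is because you have taken a typo at face value.

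The exponent $\frac{8}{\mu}\int_t^0\|\z(\zeta)\|^4_\V\,\d\zeta$ in the statement is a misprint for $\frac{8}{\mu}\int_t^0\|\z(\zeta)\|^2_\V\,\d\zeta$. Compare Lemma~\ref{Bddns4}, Definition~\ref{RA2}, Proposition~\ref{radius}, and every occurrence of this weight in the proof of Theorem~\ref{H_ab}: the power is always $2$. More tellingly, the paper's own proof of Lemma~\ref{Bddns5} silently switches to power $2$ at the step where it invokes \eqref{Bddns3}, which is precisely the quadratic bound of Corollary~\ref{Bddns1}. With the intended quadratic exponent, Corollary~\ref{Bddns1} gives $\mu\lambda_1 t+\frac{8}{\mu}\int_t^0\|\z(\zeta)\|^2_\V\,\d\zeta\le \frac{\mu\lambda_1 t}{2}$ for $t\le t_0$ immediately, and no new ergodic input is required; the rest of your argument then coincides with the paper's verbatim.

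Your quartic ergodic workaround is a sensible route to the lemma \emph{as literally stated}, but it has two soft spots you should be aware of. First, Proposition~\ref{SOUP} only asserts $\mathbb{E}\|\z_\alpha(0)\|^2_{\mathrm{X}}\to 0$; to get $\mathbb{E}\|\z_\alpha(0)\|^4_{\mathrm{X}}\to 0$ you must insert a Gaussian moment-equivalence step. Second, and more structurally, the set $\Omega=\bigcap_n\Omega_n(\xi,\mathrm{E})$ is built in the paper from the SLLN~\eqref{SLLN} for $\|\cdot\|^2_{\mathrm{X}}$ only; the quartic SLLN you invoke holds on a possibly different full-measure set, so you would have to redefine $\Omega$ by a further intersection and re-verify Proposition~\ref{m-DS}. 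Both are routine fixes, but the point is that the paper never intended any of this: the quadratic weight is what is actually used throughout, and with it the proof collapses to a direct application of Corollary~\ref{Bddns1} together with \eqref{rho}, exactly along the lines you laid out before embarking on the quartic detour.
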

	\begin{proof}Since for $t_0\leq 0$,
		\begin{align*}
		\int_{t_0}^{0} \bigg\{ 1 + \|\z(t)\|^{2}_{\V} + \|\z(s)\|^{r+1}_{\V} + \|\z(t)\|^4_{\V}  \bigg\}e^{\mu \lambda_1 t +\frac{8}{\mu} \int_{t}^{0}\|\z(\zeta)\|^{4}_{\V}\d\zeta} \d t < \infty,
		\end{align*}
		therefore, it is sufficient to prove that integral 
		\begin{align*}
		\int_{- \infty}^{t_0} \bigg\{ 1 + \|\z(t)\|^{2}_{\V} + \|\z(s)\|^{r+1}_{\V} + \|\z(t)\|^4_{\V}  \bigg\}e^{\mu \lambda_1 t +\frac{8}{\mu} \int_{t}^{0}\|\z(\zeta)\|^{4}_{\V}\d\zeta} \d t < \infty.
		\end{align*}
		Because of \eqref{Bddns3}, we obtain 
		\begin{align*}
		\int_{- \infty}^{t_0} e^{\mu \lambda_1 t +\frac{8}{\mu} \int_{t}^{0}\|\z(\zeta)\|^{2}_{\V}\d\zeta} \d t\leq \int_{- \infty}^{t_0} e^{\frac{\mu \lambda_1 t}{2}} \d t< \infty.
		\end{align*}
		Using \eqref{Bddns3} and \eqref{rho}, we deduce that 
		\begin{align*} 
		&	\int_{- \infty}^{t_0} \bigg\{ \|\z(t)\|^{2}_{\V} + \|\z(s)\|^{r+1}_{\V} + \|\z(t)\|^4_{\V}  \bigg\}e^{\mu \lambda_1 t +\frac{8}{\mu} \int_{t}^{0}\|\z(\zeta)\|^{4}_{\V}\d\zeta} \d t \\&\leq \int_{- \infty}^{t_0} \big\{ \rho^2_1|t|^2+\rho^{r+1}_1|t|^{r+1}+ \rho^4_1|t|^4  \big\}e^{\frac{\mu \lambda_1 t}{2} } \d t< \infty,
		\end{align*}
		which completes the proof.
	\end{proof}
	\begin{definition}\label{RA2}
		A function $\kappa: \Omega\to (0, \infty)$ belongs to class $\mathfrak{K}$ if and only if 
		\begin{align}
		\limsup_{t\to \infty} [\kappa(\theta_{-t}\omega)]^2 e^{-\mu \lambda_1 t +\frac{8}{\mu} \int_{-t}^{0}\|\z(\omega)(s)\|^{2}_{\V}\d s} = 0,
		\end{align}
		where $\lambda_1$ is the first eigenvalue of the Stokes operator $\A.$

		We denote by $\mathfrak{DK},$ the class of all closed and bounded random sets $\D$ on $\H$ such that the radius function $\Omega\ni \omega \mapsto \kappa(\D(\omega)):= \sup\{\|x\|_{\H}:x\in \D(\omega)\}$ belongs to the class $\mathfrak{K}.$
	\end{definition}
	By Corollary \ref{Bddns1}, we infer that the constant functions belong to $\mathfrak{K}$. 
	The class $\mathfrak{K}$ is closed with respect to sum, multiplication by a constant and if $\kappa \in \mathfrak{K}, 0\leq \bar{\kappa} \leq \kappa,$ then $\bar{\kappa}\in \mathfrak{K}.$
	\begin{proposition}\label{radius}
		Define  the functions $\kappa_{i}:\Omega\to (0, \infty), \ i= 1, 2, 3, 4, 5, 6,$ by the following formulae, for $\omega\in\Omega,$
		\begin{align*}
		[\kappa_1(\omega)]^2 &:= \|\z(\omega)(0)\|_{\H},\\
		[\kappa_2(\omega)]^2 &:= \sup_{s\leq 0} \|\z(\omega)(s)\|^2_{\H}\  e^{\mu \lambda_1 s +\frac{8}{ \mu} \int_{s}^{0}\|\z(\omega)(\zeta)\|^{2}_{\V}\d\zeta}, \\
		[\kappa_3(\omega)]^2 &:= \int_{- \infty}^{0} \|\z(\omega)(t)\|^{2}_{\V}\ e^{\mu \lambda_1 t +\frac{8}{ \mu} \int_{t}^{0}\|\z(\omega)(\zeta)\|^{2}_{\V}\d\zeta} \d t, \\
		[\kappa_4(\omega)]^2 &:= \int_{- \infty}^{0} \|\z(\omega)(t)\|^{r+1}_{\V}\ e^{\mu \lambda_1 t +\frac{8}{ \mu} \int_{t}^{0}\|\z(\omega)(\zeta)\|^{2}_{\V}\d\zeta} \d t,\\
		[\kappa_5(\omega)]^2 &:= \int_{- \infty}^{0} \|\z(\omega)(t)\|^4_{\V}\ e^{\mu \lambda_1 t +\frac{8}{ \mu} \int_{t}^{0}\|\z(\omega)(\zeta)\|^{2}_{\V}\d\zeta} \d t,\\
		[\kappa_6(\omega)]^2 &:= \int_{- \infty}^{0} e^{\mu \lambda_1 t +\frac{8}{ \mu} \int_{t}^{0}\|\z(\omega)(\zeta)\|^{2}_{\V}\d\zeta} \d t.
		\end{align*}
		Then all these functions belongs to the class $\mathfrak{K}.$
	\end{proposition}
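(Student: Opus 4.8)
The plan is to verify the defining condition of $\mathfrak{K}$ from Definition \ref{RA2} for each $\kappa_i$ separately, the common engine being the stationarity identity $\z(\theta_{-t}\omega)(s)=\z(\omega)(s-t)$ recorded in Remark \ref{stationary}. Fix $\omega\in\Omega$. For each $i$ I would substitute this identity into $[\kappa_i(\theta_{-t}\omega)]^2$, multiply by the weight $e^{-\mu\lambda_1 t+\frac{8}{\mu}\int_{-t}^0\|\z(\omega)(s)\|_\V^2\,ds}$ occurring in Definition \ref{RA2}, and then perform the shift $\sigma=s-t$ (together with $\eta=\zeta-t$ inside the nested $\zeta$-integrals). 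The purpose of this change of variables is that the $-\mu\lambda_1 t$ in the weight cancels the $+\mu\lambda_1 t$ produced by $\mu\lambda_1 s=\mu\lambda_1(\sigma+t)$, while the two exponential integrals telescope via $\int_\sigma^{-t}\|\z\|_\V^2+\int_{-t}^0\|\z\|_\V^2=\int_\sigma^0\|\z\|_\V^2$, leaving a quantity in which $t$ survives only in the limits of integration (or of the supremum).

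For the two ``pointwise/supremum'' radii $\kappa_1,\kappa_2$, this reduction leads straight back to Lemma \ref{Bddns4}. Indeed, $[\kappa_2(\theta_{-t}\omega)]^2$ times the weight equals $\sup_{\sigma\le-t}\|\z(\omega)(\sigma)\|_\H^2\, e^{\mu\lambda_1\sigma+\frac{8}{\mu}\int_\sigma^0\|\z(\omega)(\zeta)\|_\V^2\,d\zeta}$, and Lemma \ref{Bddns4} says exactly that the expression inside the supremum tends to $0$ as $\sigma\to-\infty$, so the supremum over $\sigma\le-t$ vanishes as $t\to\infty$. For $\kappa_1$ the weighted quantity is $\|\z(\omega)(-t)\|_\H\, e^{-\mu\lambda_1 t+\frac{8}{\mu}\int_{-t}^0\|\z\|_\V^2}$; here I would bound the exponent by $-\frac{\mu\lambda_1 t}{2}$ using \eqref{Bddns3} and control $\|\z(\omega)(-t)\|_\H\le\lambda_1^{-1/2}\|\z(\omega)(-t)\|_\V\le\lambda_1^{-1/2}\rho_1 t$ through Poincar\'e \eqref{poin} and the linear growth bound \eqref{rho}, so that the product is dominated by $C\,t\,e^{-\mu\lambda_1 t/2}\to 0$.

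For the four ``integral'' radii $\kappa_3,\kappa_4,\kappa_5,\kappa_6$ the same substitution turns the weighted quantity into a tail integral; for instance $\kappa_3$ yields $\int_{-\infty}^{-t}\|\z(\omega)(\sigma)\|_\V^2\, e^{\mu\lambda_1\sigma+\frac{8}{\mu}\int_\sigma^0\|\z(\omega)(\eta)\|_\V^2\,d\eta}\,d\sigma$, and analogously with integrands $\|\z\|_\V^{r+1}$, $\|\z\|_\V^4$ and $1$ for $\kappa_4,\kappa_5,\kappa_6$. Each of the corresponding full integrals over $(-\infty,0)$ is finite by Lemma \ref{Bddns5}, so the tail over $(-\infty,-t)$ converges to $0$ as $t\to\infty$, which is precisely membership in $\mathfrak{K}$.

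The only genuinely delicate point is bookkeeping rather than analysis: one must check that the weight in Definition \ref{RA2} carries exactly the power of $\|\z\|_\V$ that makes the two nested $\zeta$-integrals telescope after the shift, so that the residual exponent $\mu\lambda_1\sigma+\frac{8}{\mu}\int_\sigma^0\|\z\|_\V^2$ is independent of $t$ and matches the exponents appearing in Lemmas \ref{Bddns4} and \ref{Bddns5}. Once this cancellation is in place the conclusion is immediate from those two lemmas, and each $\kappa_i$ is handled directly without appealing to the closure properties of $\mathfrak{K}$.
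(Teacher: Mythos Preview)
Your proposal is correct and follows essentially the same route as the paper's proof: use the stationarity identity $\z(\theta_{-t}\omega)(s)=\z(\omega)(s-t)$ from Remark \ref{stationary}, shift variables so that the $\pm\mu\lambda_1 t$ terms cancel and the two $\int\|\z\|_\V^2$ exponents telescope to $\int_\sigma^0$, and then reduce $\kappa_1,\kappa_2$ to Lemma \ref{Bddns4} and $\kappa_3,\ldots,\kappa_6$ to tails of the finite integrals in Lemma \ref{Bddns5}. The only cosmetic difference is that for $\kappa_1$ you redo the estimate of Lemma \ref{Bddns4} explicitly via \eqref{Bddns3}, \eqref{poin}, \eqref{rho}, whereas the paper simply cites Lemma \ref{Bddns4}; your version is in fact slightly cleaner here, since $[\kappa_1]^2=\|\z(0)\|_\H$ is a first power and the direct argument avoids any ambiguity about matching exponents.
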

	\begin{proof}Recall by Remark \ref{stationary} that $\z(\theta_{-t}\omega)(s) = \z(\omega)(s-t)$. Thus, we find  
		\begin{align*}
		\limsup_{t\to  \infty}[\kappa_1(\theta_{-t}\omega)]^2 e^{-\mu \lambda_1 t +\frac{8}{ \mu} \int_{-t}^{0}\|\z(\omega)(s)\|^{2}_{\V}\d s} =& \limsup_{t\to \infty}\|\z(\theta_{-t}\omega)(0)\|^2_{\H} e^{-\mu \lambda_1 t +\frac{8}{\mu} \int_{-t}^{0}\|\z(\omega)(s)\|^{2}_{\V}\d s}\\
		=&\limsup_{t\to \infty}\|\z(\omega)(-t)\|^2_{\H} e^{-\mu \lambda_1 t +\frac{8}{\mu} \int_{-t}^{0}\|\z(\omega)(s)\|^{2}_{\V}\d s}.
		\end{align*}
		Using Lemma \ref{Bddns4}, we have, $\kappa_1 \in \mathfrak{K}.$
		It can be easily seen that 
		\begin{align*}
		[\kappa_2(\theta_{-t}\omega)]^2 &=  \sup_{s\leq 0}  \|\z(\theta_{-t}\omega)(s)\|^2_{\H}\  e^{\mu \lambda_1 s +\frac{8}{\mu} \int_{s}^{0}\|\z(\theta_{-t}\omega)(\zeta)\|^{2}_{\V}\d\zeta}\\
		&=  \sup_{s\leq 0}  \|\z(\omega)(s-t)\|^2_{\H}\  e^{\mu \lambda_1 s +\frac{8}{ \mu} \int_{s}^{0}\|\z(\omega)(\zeta -t)\|^{2}_{\V}\d\zeta}\\
		&=  \sup_{s\leq 0}  \|\z(\omega)(s-t)\|^2_{\H}\  e^{\mu \lambda_1 (s-t) +\frac{8}{ \mu} \int_{s-t}^{-t}\|\z(\omega)(\zeta)\|^{2}_{\V}\d\zeta}\ e^{\mu \lambda_1 t}\\
		&=  \sup_{\sigma\leq -t}  \|\z(\omega)(\sigma)\|^2_{\H}\  e^{\mu \lambda_1 \sigma +\frac{8}{ \mu} \int_{\sigma}^{-t}\|\z(\omega)(\zeta)\|^{2}_{\V}\d\zeta}\ e^{\mu \lambda_1 t},
		\end{align*}and 
		\begin{align*}
		\limsup_{t\to \infty} [\kappa_2(\theta_{-t}\omega)]^2 e^{-\mu \lambda_1 t +\frac{8}{ \mu} \int_{-t}^{0}\|\z(\omega)(s)\|^{2}_{\V}\d s} &=\limsup_{t\to \infty} \sup_{\sigma\leq -t}  \|\z(\omega)(\sigma)\|^2_{\H}\  e^{\mu \lambda_1 \sigma +\frac{8}{ \mu} \int_{\sigma}^{0}\|\z(\omega)(\zeta)\|^{2}_{\V}\d\zeta}\\
		&=\limsup_{\sigma\to -\infty} \|\z(\omega)(\sigma)\|^2_{\H}\  e^{\mu \lambda_1 \sigma +\frac{8}{\mu} \int_{\sigma}^{0}\|\z(\omega)(\zeta)\|^{4}_{\widetilde{\L}^{4}}\d\zeta}\\
		& =0,
		\end{align*}
		using Lemma \ref{Bddns4}, which implies $\kappa_2\in \mathfrak{K}.$
		From the previous part of the proof, we infer that 
		\begin{align*}
		&\Big\{[\kappa_3(\theta_{-t}\omega)]^2+ [\kappa_4(\theta_{-t}\omega)]^2+ [\kappa_5(\theta_{-t}\omega)]^2 +[\kappa_6(\theta_{-t}\omega)]^2\Big\} e^{-\mu \lambda_1 t +\frac{8}{ \mu} \int_{-t}^{0}\|\z(\omega)(s)\|^{2}_{\V}\d s}\\
		&\quad=\int_{- \infty}^{-t} \bigg\{  \|\z(\omega)(t)\|^{2}_{\V} + \|\z(\omega)(s)\|^{r+1}_{\V} + \|\z(\omega)(t)\|^4_{\V} + 1 \bigg\}e^{\mu \lambda_1 \sigma +\frac{8}{ \mu} \int_{\sigma}^{0}\|\z(\omega)(\zeta)\|^{2}_{\V}\d\zeta} \d\sigma.
		\end{align*}
		Since by Lemma \ref{Bddns5}, we have 
		\begin{align*}
		\int_{- \infty}^{0} \bigg\{ \|\z(t)\|^{2}_{\V} + \|\z(s)\|^{r+1}_{\V} + \|\z(t)\|^4_{\V} +1  \bigg\}e^{\mu \lambda_1 t +\frac{8}{ \mu} \int_{t}^{0}\|\z(\zeta)\|^{2}_{\V}\d\zeta} \d t < \infty.
		\end{align*}
		By the Lebesgue monotone theorem, we conclude that as $t\to \infty$
		\begin{align*}
		\int_{- \infty}^{-t} \bigg\{  \|\z(t)\|^{2}_{\V} + \|\z(s)\|^{r+1}_{\V} + \|\z(t)\|^4_{\V} +1 \bigg\}e^{\mu \lambda_1 \sigma +\frac{8}{ \mu} \int_{\sigma}^{0}\|\z(\omega)(\zeta)\|^{2}_{\V}\d\zeta} \d\sigma \to 0.
		\end{align*}
		This implies that $\kappa_3, \kappa_4, \kappa_5, \kappa_6\in \mathfrak{K}$, which completes the proof.
	\end{proof}
	
	\begin{theorem}\label{H_ab}
		Assume that $\f\in\H$ and Assumption \ref{assump} holds. Then there exists a family $\hat{\mathrm{B}}_0=\{\mathrm{{ B}}_0(\omega):\omega\in \Omega\}$ of $\mathfrak{DK}$-random absorbing sets in $\H$ corresponding to the RDS $\varphi_\varepsilon.$
	\end{theorem}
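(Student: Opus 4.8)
The plan is to derive a pathwise energy estimate for the transformed equation \eqref{cscbf}, to convert it into a differential inequality whose Gronwall kernel matches \emph{exactly} the weight $e^{\mu\lambda_1 t+\frac{8}{\mu}\int_{t}^{0}\|\z(\zeta)\|^{2}_{\V}\d\zeta}$ appearing in Lemmas \ref{Bddns4}--\ref{Bddns5} and Proposition \ref{radius}, and then to pass to the pullback limit. Fix $\alpha\geq\alpha_0$ so that \eqref{Bddns2} is available and write $\z=\z_{\alpha}$. Taking the $\H$-inner product of \eqref{cscbf} with $\v_\varepsilon(t)$ and using \eqref{2.7a} yields, for a.e. $t$,
\begin{align*}
\frac12\frac{\d}{\d t}\|\v_\varepsilon\|^2_{\H}+\mu\|\v_\varepsilon\|^2_{\V}=-\langle\B(\v_\varepsilon+\varepsilon\z),\v_\varepsilon\rangle-\beta\langle\mathcal{C}(\v_\varepsilon+\varepsilon\z),\v_\varepsilon\rangle+\varepsilon\alpha(\z,\v_\varepsilon)+\langle\f,\v_\varepsilon\rangle.
\end{align*}
First I would treat the three nonlinear contributions separately. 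For the convective term, \eqref{b0} gives $\langle\B(\v_\varepsilon+\varepsilon\z),\v_\varepsilon\rangle=\varepsilon b(\v_\varepsilon+\varepsilon\z,\z,\v_\varepsilon)=-\varepsilon b(\v_\varepsilon+\varepsilon\z,\v_\varepsilon,\z)$, and Ladyzhenskaya's inequality \eqref{lady} together with Young's inequality (using $0<\varepsilon\le1$) bounds this by $\frac{\mu}{8}\|\v_\varepsilon\|^2_{\V}+\frac{4}{\mu}\|\z\|^2_{\V}\|\v_\varepsilon\|^2_{\H}+C(\|\z\|^2_{\V}+\|\z\|^4_{\V})$. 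For the Forchheimer term I would split
\begin{align*}
\langle\mathcal{C}(\v_\varepsilon+\varepsilon\z),\v_\varepsilon\rangle=\|\v_\varepsilon+\varepsilon\z\|^{r+1}_{\widetilde{\L}^{r+1}}-\varepsilon\langle\mathcal{C}(\v_\varepsilon+\varepsilon\z),\z\rangle,
\end{align*}
keeping the first, dissipative, nonnegative term and absorbing the cross term by H\"older and Young into $\tfrac{\beta}{2}\|\v_\varepsilon+\varepsilon\z\|^{r+1}_{\widetilde{\L}^{r+1}}$ at the cost of $C\|\z\|^{r+1}_{\widetilde{\L}^{r+1}}\le C\|\z\|^{r+1}_{\V}$, valid uniformly in $r\in[1,\infty)$ since $\V\hookrightarrow\widetilde{\L}^{r+1}$. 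The linear and forcing terms are estimated by $\frac{\mu}{8}\|\v_\varepsilon\|^2_{\V}+C(\|\z\|^2_{\H}+\|\f\|^2_{\V'})$.

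Collecting these estimates, multiplying by $2$ and invoking the Poincar\'e inequality \eqref{poin} on the remaining viscous dissipation, I expect to obtain the differential inequality
\begin{align*}
\frac{\d}{\d t}\|\v_\varepsilon\|^2_{\H}+\Big(\mu\lambda_1-\tfrac{8}{\mu}\|\z\|^2_{\V}\Big)\|\v_\varepsilon\|^2_{\H}\le C\big(1+\|\z\|^2_{\V}+\|\z\|^{r+1}_{\V}+\|\z\|^4_{\V}+\|\f\|^2_{\V'}\big)=:\mathcal{R}(t).
\end{align*}
Gronwall's inequality then gives, for $t\ge s$, a bound of the form
\begin{align*}
\|\v_\varepsilon(t)\|^2_{\H}\le\|\v_\varepsilon(s)\|^2_{\H}\,e^{-\mu\lambda_1(t-s)+\frac{8}{\mu}\int_{s}^{t}\|\z(\zeta)\|^2_{\V}\d\zeta}+\int_{s}^{t}e^{-\mu\lambda_1(t-\tau)+\frac{8}{\mu}\int_{\tau}^{t}\|\z(\zeta)\|^2_{\V}\d\zeta}\mathcal{R}(\tau)\d\tau.
\end{align*}
Next I would perform the pullback step: using the cocycle structure and the stationarity relation $\z(\theta_{-t}\omega)(s)=\z(\omega)(s-t)$ from Remark \ref{stationary}, I evaluate the solution $\v_\varepsilon$ started at time $-t$ with datum $\u_0-\varepsilon\z(\omega)(-t)$ at the present time. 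The initial-data contribution becomes $\|\u_0-\varepsilon\z(\omega)(-t)\|^2_{\H}\,e^{-\mu\lambda_1 t+\frac{8}{\mu}\int_{-t}^{0}\|\z(\omega)\|^2_{\V}\d\zeta}$, which tends to $0$ as $t\to\infty$: the $\u_0$-part vanishes because $D\in\mathfrak{DK}$ forces the radius function into the class $\mathfrak{K}$, and the $\z$-part vanishes by Lemma \ref{Bddns4}. The integral term converges, by Lemma \ref{Bddns5}, to a finite quantity controlled by $\kappa_3^2+\kappa_4^2+\kappa_5^2+\kappa_6^2$. Adding back $\varepsilon\z(\omega)(0)$ (since $\u=\v_\varepsilon+\varepsilon\z$) and using $\kappa_1,\kappa_2$, I would define $B_0(\omega)$ to be the closed $\H$-ball centred at the origin whose squared radius is a fixed finite linear combination of $[\kappa_1(\omega)]^2,\dots,[\kappa_6(\omega)]^2$. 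Since each $\kappa_i\in\mathfrak{K}$ by Proposition \ref{radius} and $\mathfrak{K}$ is closed under sums and multiplication by constants, the radius function lies in $\mathfrak{K}$, so $\hat{\mathrm{B}}_0\in\mathfrak{DK}$ is $\mathfrak{DK}$-absorbing.

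The main obstacle is the estimation of the two nonlinearities so that, simultaneously, every term carrying $\v_\varepsilon$ is absorbed by the viscous dissipation $\mu\|\v_\varepsilon\|^2_{\V}$ and the Forchheimer dissipation $\beta\|\v_\varepsilon+\varepsilon\z\|^{r+1}_{\widetilde{\L}^{r+1}}$, while the residual depends only on $\z$ and $\f$ with weights that render the Gronwall kernel integrable. In particular, the delicate point is arranging the convective estimate so that the coefficient of $\|\v_\varepsilon\|^2_{\H}$ is \emph{precisely} $\tfrac{8}{\mu}\|\z\|^2_{\V}$, which is what allows Corollary \ref{Bddns1} (via \eqref{Bddns2}) to dominate $\tfrac{8}{\mu}\int_{-t}^{0}\|\z\|^2_{\V}\d\zeta$ by $\tfrac{\mu\lambda_1 t}{2}$ and thereby secure the decay of the exponential kernel; handling the Forchheimer cross term in a way that is uniform over all $r\in[1,\infty)$ is the second technical hurdle.
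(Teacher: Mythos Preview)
Your proposal is correct and follows essentially the same route as the paper: energy identity for \eqref{cscbf}, splitting the convective term via \eqref{b0} to isolate exactly the factor $\tfrac{8}{\mu}\|\z\|^{2}_{\V}\|\v_\varepsilon\|^{2}_{\H}$, splitting the Forchheimer term into the dissipative piece $\|\v_\varepsilon+\varepsilon\z\|^{r+1}_{\widetilde{\L}^{r+1}}$ and a cross term absorbed by H\"older--Young, then Gronwall and the pullback argument based on Lemmas \ref{Bddns4}--\ref{Bddns5} and Proposition \ref{radius}. The only cosmetic deviations are that the paper splits $b(\v_\varepsilon+\varepsilon\z,\z,\v_\varepsilon)$ further into $b(\v_\varepsilon,\z,\v_\varepsilon)+\varepsilon b(\z,\z,\v_\varepsilon)$ before estimating, and it uses $\|\f\|_{\H}$ rather than $\|\f\|_{\V'}$; neither changes the argument.
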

	\begin{proof}
		Let ${\mathrm{D}}$ be a random set from the class $\mathfrak{DK}$. Let $\kappa_{\mathrm{D}}(\omega)$ be the radius of ${\mathrm{D}}(\omega)$, that is, $\kappa_{\mathrm{D}}(\omega):= \sup\{\|x\|_{\H} : x \in {\mathrm{D}}(\omega)\},\ \omega\in \Omega.$
		
		Let $\omega\in \Omega$ be fixed. For given $s\leq 0$ and $\u_0\in \H$, let $\v_\varepsilon(\cdot)$ be the unique weak solution of \eqref{cscbf} on time interval $[s, \infty)$ with the initial condition $\v_\varepsilon(s)= \u_0-\varepsilon\z(s).$
		Multiplying the first equation of \eqref{cscbf} by $\v_\varepsilon(\cdot)$ and integrating the resulting equation over $\mathcal{O}$, we obtain	
		\begin{align}\label{H_ab1}
		\frac{1}{2}\frac{\d}{\d t}\|\v_\varepsilon(t)\|^2_{\H} = & -\mu\|\v_\varepsilon(t)\|^2_{\V} -b(\v_\varepsilon(t)+\varepsilon\z(t),\v_\varepsilon(t)+\varepsilon\z(t),\v_\varepsilon(t))\nonumber\\&-\beta(\mathcal{C}\big(\v_\varepsilon(t)+\varepsilon\z(t)\big),\v_\varepsilon(t)) +\varepsilon\alpha\big(\z(t), \v_\varepsilon(t)\big) +\big(\f,\v_\varepsilon(t)\big)\nonumber\\
		=&-\mu\|\v_\varepsilon(t)\|^2_{\V} -\varepsilon b(\v_\varepsilon(t),\z(t),\v_\varepsilon(t))-\varepsilon^2b(\z(t),\z(t),\v_\varepsilon(t))\nonumber\\&+\varepsilon\beta(\mathcal{C}\big(\v_\varepsilon(t)+\varepsilon\z(t)\big),\z(t))-\beta(\mathcal{C}\big(\v_\varepsilon(t)+\varepsilon\z(t)\big),\v_\varepsilon(t)+\varepsilon\z(t)) \nonumber\\&+\varepsilon\alpha\big(\z(t), \v_\varepsilon(t)\big) +\big(\f,\v_\varepsilon(t)\big) .
		\end{align}
		Using $0<\varepsilon\leq1$, H\"older's inequality, Young's inequality, Sobolev's embedding, \eqref{poin} and \eqref{lady}, we have
		\begin{align*}
		|\varepsilon b(\v_\varepsilon,\z, \v_\varepsilon)|&\leq \|\v_\varepsilon\|^2_{\widetilde{\L}^{4}}\|\z\|_{\V}\leq\sqrt{2}\|\v_\varepsilon\|_{\H}\|\v_\varepsilon\|_{\V}\|\z\|_{\V}\leq \frac{\mu}{8} \|\v_\varepsilon\|^2_{\V}+ \frac{4}{\mu}\|\z\|^2_{\V}\|\v_\varepsilon\|^2_{\H} ,\\
		|\varepsilon^2b(\z,\z, \v_\varepsilon)|&\leq \varepsilon^2\|\z\|^2_{\widetilde{\L}^{4}}\|\v_\varepsilon\|_{\V}\leq\varepsilon^2\sqrt{2}\|\z\|_{\H}\|\z\|_{\V}\|\v_\varepsilon\|_{\V}\leq \frac{\mu}{8} \|\v_\varepsilon\|^2_{\V}+ \frac{4\varepsilon^4}{\mu}\|\z\|^2_{\H}\|\z\|^2_{\V},\\
		\beta(\mathcal{C}(\v_\varepsilon+\varepsilon\z),\v_\varepsilon+\varepsilon\z) & =  \beta\|\v_\varepsilon+\varepsilon\z\|^{r+1}_{\widetilde{\L}^{r+1}},\\
		|\varepsilon\beta\big\langle\mathcal{C}(\v_\varepsilon+\varepsilon\z),\z\big\rangle|& \leq \varepsilon\beta \|\v_\varepsilon+\varepsilon\z\|^{r}_{\widetilde{\L}^{r+1}} \|\z\|_{\widetilde{\L}^{r+1}} \leq \frac{\beta}{2} \|\v_\varepsilon+\varepsilon\z\|^{r+1}_{\widetilde{\L}^{r+1}} + \frac{\varepsilon^{r+1}\beta(2r)^r}{(r+1)^{r+1}}\|\z\|^{r+1}_{\widetilde{\L}^{r+1}}\\
		& \leq \frac{\beta}{2} \|\v_\varepsilon+\varepsilon\z\|^{r+1}_{\widetilde{\L}^{r+1}} + \varepsilon^{r+1}C\|\z\|^{r+1}_{\V},\\
		|\varepsilon\alpha\big( \z, \v_\varepsilon\big) |  &\leq \varepsilon\alpha\|\z\|_{\H} \|\v_\varepsilon\|_{\H}\leq \frac{\varepsilon\alpha}{\lambda_1^2}\|\z\|_{\V} \|\v_\varepsilon\|_{\V}\leq \frac{\mu}{8} \|\v_\varepsilon\|^2_{\V} + \frac{4\varepsilon^2\alpha^2}{\mu\lambda_1^4} \|\z\|^2_{\V},\\
		|\big(\f, \v_\varepsilon\big) | &\leq  \|\f\|_{\H} \|\v_\varepsilon\|_{\H}\leq \frac{1}{\lambda_1}\|\f\|_{\H} \|\v_\varepsilon\|_{\V}\leq  \frac{\mu}{8} \|\v_\varepsilon\|^2_{\V} + \frac{4}{\mu\lambda_1^2} \|\f\|^2_{\H}.
		\end{align*}
		Thus from \eqref{H_ab1}, we deduce that
		\begin{align*}
		&\frac{1}{2}\frac{\d}{\d t} \|\v_\varepsilon(t)\|^2_{\H}  + \frac{\mu}{2} \|\v_\varepsilon(t)\|^2_{\V}  +\frac{\beta}{2}\|\v_\varepsilon(t)+\varepsilon\z(t)\|^{r+1}_{\widetilde{\L}^{r+1}}\nonumber\\&\leq  \frac{4 }{\mu} \|\v_\varepsilon(t)\|^2_{\H}\ \|\z(t)\|^{2}_{\V} +\frac{4\varepsilon^4}{\mu\lambda_1^2}\|\z(t)\|^4_{\V} + C\varepsilon^{r+1}\|\z(t)\|^{r+1}_{\V}+  \frac{4\alpha^2\varepsilon^2}{\mu\lambda_1^4} \|\z(t)\|^2_{\V} + \frac{4}{\mu\lambda_1^2} \|\f\|^2_{\H},
		\end{align*}
		and 
		\begin{align}\label{H_ab2}
		&\frac{\d}{\d t} \|\v_\varepsilon(t)\|^2_{\H}  + \mu\lambda_1 \|\v_\varepsilon(t)\|^2_{\H}  \nonumber\\&\leq  \frac{8}{\mu} \|\v_\varepsilon(t)\|^2_{\H}\ \|\z(t)\|^{2}_{\V} +\frac{8\varepsilon^4}{\mu\lambda_1^2}\|\z(t)\|^4_{\V} + \varepsilon^{r+1}C\|\z(t)\|^{r+1}_{\V}+  \frac{8\alpha^2\varepsilon^2}{\mu\lambda_1^4} \|\z(t)\|^2_{\V} + \frac{8}{\mu\lambda_1^2} \|\f\|^2_{\H},
		\end{align}
	for a.e. $t\in[0,T]$.	We infer from  the classical Gronwall inequality that 
		\begin{align}\label{H_ab3}
		\|\v_\varepsilon(0)\|^2_{\H} &\leq \|\v_\varepsilon(s)\|^2_{\H} e^{\mu\lambda_1 s + \frac{8}{\mu}\int_{s}^{0}\|\z(\zeta)\|^2_{\V}\d\zeta} +\int_{s}^{0}\biggl\{\frac{8\varepsilon^4}{\mu\lambda_1^2}\|\z(t)\|^4_{\V}  + C\varepsilon^{r+1}\|\z(t)\|^{r+1}_{\V}\nonumber\\& \quad+  \frac{8\alpha^2\varepsilon^2}{\mu\lambda_1^4} \|\z(t)\|^2_{\V}  + \frac{8\varepsilon^2}{\mu\lambda_1^2} \|\f\|^2_{\H}\biggr\}e^{\mu\lambda_1t  + \frac{8}{\mu}\int_{t}^{0}\|\z(\zeta)\|^2_{\V}\d\zeta}\d t\nonumber\\
		&\leq2 \|\u_0\|^2_{\H} e^{\mu\lambda_1 s + \frac{8}{\mu}\int_{s}^{0}\|\z(\zeta)\|^2_{\V}\d\zeta} +2\varepsilon^2\|\z(s)\|^2_{\H} e^{\mu\lambda_1 s + \frac{8}{\mu}\int_{s}^{0}\|\z(\zeta)\|^2_{\V}\d\zeta} +\int_{s}^{0}\bigg\{\frac{8\varepsilon^4}{\mu\lambda_1^2}\|\z(t)\|^4_{\V} \nonumber \\& \quad + C\varepsilon^{r+1}\|\z(t)\|^{r+1}_{\V}+  \frac{8\alpha^2\varepsilon^2}{\mu\lambda_1^4} \|\z(t)\|^2_{\V}  + \frac{8}{\mu\lambda_1^2} \|\f\|^2_{\H}\biggr\}e^{\mu\lambda_1t  + \frac{8}{\mu}\int_{t}^{0}\|\z(\zeta)\|^2_{\V}\d\zeta}\d t.
		\end{align}
		Let us set for $\omega\in \Omega,$
		\begin{align}
		[\kappa_{11}(\omega)]^2  &=2 +  2\varepsilon^2\sup_{s\leq 0}\bigg\{ \|\z(s)\|^2_{\H}\  e^{\mu \lambda_1 s +\frac{8}{ \mu} \int_{s}^{0}\|\z(\zeta)\|^{2}_{\V}\d\zeta}\bigg\} + \int_{- \infty}^{0} \bigg\{\frac{8\varepsilon^4}{\mu\lambda_1^2}\|\z(t)\|^4_{\V}  \nonumber\\& \quad + C\varepsilon^{r+1}\|\z(t)\|^{r+1}_{\V}+  \frac{8\alpha^2\varepsilon^2}{\mu\lambda_1^4} \|\z(t)\|^2_{\V}  + \frac{8}{\mu\lambda_1^2} \|\f\|^2_{\H}\bigg\}e^{\mu \lambda_1 t +\frac{8}{ \mu} \int_{t}^{0}\|\z(\zeta)\|^{2}_{\V}\d\zeta} \d t,\\	
		\kappa_{12}(\omega)&= \varepsilon \|\z(\omega)(0)\|_{\H}.
		\end{align}
		By Lemma \ref{Bddns5} and Proposition \ref{radius}, we infer that both $\kappa_{11}$ and $\kappa_{12}$ belong to the class $\mathfrak{K}$ and also that $\kappa_{13}:=\kappa_{11}+\kappa_{12}$ belongs to the class $\mathfrak{K}$ as well. Therefore the random set $\text{B}_0$ defined by $$\text{B}_0(\omega) := \{\u_\varepsilon\in\H: \|\u_\varepsilon\|_{\H}\leq \kappa_{13}(\omega)\},$$ belongs to the family $\mathfrak{DK}.$
		
		We will show now that $\text{B}_0$ absorbs ${\mathrm{D}}$. Let $\omega\in\Omega$ be fixed. Since $\kappa_{\mathrm{D}}(\omega)\in \mathfrak{K}$, there exists $t_{\mathrm{D}}(\omega)\geq 0$ such that 
		\begin{align*}
		[\kappa_{\mathrm{D}}(\theta_{-t}\omega)]^2 e^{-\mu \lambda_1 t +\frac{8}{ \mu} \int_{-t}^{0}\|\z(\omega)(s)\|^{2}_{\V}\d s} \leq 1 \  \text{ for }\  t\geq t_{\mathrm{D}}(\omega). 
		\end{align*}
		Thus, if $\u_0\in {\mathrm{D}}(\theta_{-t}\omega)$ and $s\leq- t_{\mathrm{D}}(\omega),$ then for any $\varepsilon\in(0,1]$ by \eqref{H_ab3}, we get
		\begin{align}\label{ab_H}
		\|\v_\varepsilon(0,s;\omega, \u_0-\z(s))\|_{\H}\leq \kappa_{11}(\omega).	
		\end{align}
		Moreover, we have 
		\begin{align}\label{H_ab5}
		\|\u_\varepsilon(0,s;\omega, \u_0)\|_{\H} \leq \|\v_\varepsilon(0,s;\omega, \u_0-\varepsilon\z(s))\|_{\H} + \varepsilon\|\z(\omega)(0)\|_{\H}\leq \kappa_{13}(\omega).
		\end{align}
		This implies that $\u_\varepsilon(0,s;\omega; \u_0) \in \text{B}_0(\omega)$, for all $s\leq -t_{\mathrm{D}}(\omega).$ This proves that $\text{B}_0$ absorbs ${\mathrm{D}}$.
		
		Furthermore, integrating \eqref{H_ab2} over $(-1,0)$, we find for any $\omega\in \Omega$, there exists a $\kappa_{14}(\omega)\geq 0$ such that
		\begin{align}\label{H_ab4}
		\int_{-1}^{0} \bigg[\|\v_\varepsilon(t)\|^2_{\V} + \|\v_\varepsilon(t)+ \varepsilon\z(t)\|^{r+1}_{\widetilde{\L}^{r+1}} \bigg]\d t \leq \kappa_{14}(\omega),
		\end{align}
		for all $s\leq -t_{\mathrm{D}}(\omega).$
	\end{proof}
	\begin{theorem}\label{V_ab}
		Assume that $\f\in\H$ and Assumption \ref{assump} holds. Then there exists a family $\hat{\mathrm{B}}_1=\{\mathrm{{ B}}_1(\omega):\omega\in \Omega\}$ of $\mathfrak{DK}$-random absorbing sets in $\V$ corresponding to the RDS $\varphi_\varepsilon.$
	\end{theorem}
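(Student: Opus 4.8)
The plan is to bootstrap the $\H$-absorbing ball produced in Theorem \ref{H_ab} up to a $\V$-absorbing ball by a parabolic smoothing argument, testing \eqref{cscbf} against $\A\v_\varepsilon$ rather than against $\v_\varepsilon$ and feeding in the time-integrated bounds already recorded in \eqref{H_ab4}. Fix $\omega\in\Omega$, $s\leq 0$ and $\u_0\in\H$, write $\w=\v_\varepsilon+\varepsilon\z$, and take the inner product of the first equation in \eqref{cscbf} with $\A\v_\varepsilon$; using \eqref{2.7a} this gives
\begin{align*}
\frac12\frac{\d}{\d t}\|\v_\varepsilon\|_\V^2+\mu\|\A\v_\varepsilon\|_\H^2=-b(\w,\w,\A\v_\varepsilon)-\beta(\mathcal{C}(\w),\A\v_\varepsilon)+\varepsilon\alpha(\z,\A\v_\varepsilon)+(\f,\A\v_\varepsilon).
\end{align*}
The decisive structural observation is to substitute $\A\v_\varepsilon=\A\w-\varepsilon\A\z$ in the Forchheimer term, so that $-\beta(\mathcal{C}(\w),\A\v_\varepsilon)=-\beta(\mathcal{C}(\w),\A\w)+\varepsilon\beta(\mathcal{C}(\w),\A\z)$, where $\beta(\mathcal{C}(\w),\A\w)\geq 0$ is the nonnegative Forchheimer dissipation (an integration by parts). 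I intend to keep this good term on the left and use it to dominate the correction $\varepsilon\beta(\mathcal{C}(\w),\A\z)$; note that $\A\z\in\H$ is exactly what the $\H^2$-regularity of the Ornstein--Uhlenbeck process from Assumption \ref{assump} provides.

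For the convective term I would apply \eqref{b1} with $\A\v_\varepsilon\in\H$ in the third slot, bounding $|b(\w,\w,\A\v_\varepsilon)|$ by $C\|\w\|_\H^{1/2}\|\w\|_\V\|\A\w\|_\H^{1/2}\|\A\v_\varepsilon\|_\H$, then split $\|\A\w\|_\H\leq\|\A\v_\varepsilon\|_\H+\varepsilon\|\A\z\|_\H$ and use Young's inequality to absorb a fixed fraction of $\|\A\v_\varepsilon\|_\H^2$, leaving a term of the type $C\|\w\|_\H^2\|\w\|_\V^4$ together with lower-order $\z$-contributions. The linear and forcing terms are immediate: $\varepsilon\alpha\|\z\|_\H\|\A\v_\varepsilon\|_\H$ and $\|\f\|_\H\|\A\v_\varepsilon\|_\H$ are split by Young's inequality, again against $\|\A\v_\varepsilon\|_\H^2$. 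After using $\|\w\|_\V^4\leq 8\|\v_\varepsilon\|_\V^4+8\varepsilon^4\|\z\|_\V^4$ and, where convenient, $\|\A\v_\varepsilon\|_\H^2\geq\lambda_1\|\v_\varepsilon\|_\V^2$ from \eqref{poin_1}, I arrive at a differential inequality of the form
\begin{align*}
\frac{\d}{\d t}\|\v_\varepsilon\|_\V^2+\mu\|\A\v_\varepsilon\|_\H^2\leq g(t)\|\v_\varepsilon\|_\V^2+h(t),
\end{align*}
where $g$ carries the factor $C\|\w\|_\H^2\|\v_\varepsilon\|_\V^2$ and $h$ collects the $\|\z\|_\V$, $\|\A\z\|_\H$, $\|\f\|_\H$ contributions.

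The reason for organizing the estimate this way is that the $\H$-analysis already supplies every integrability needed to run a uniform Gronwall argument on the unit window $(-1,0)$: inequality \eqref{H_ab4} gives $\int_{-1}^0\|\v_\varepsilon(t)\|_\V^2\,\d t\leq\kappa_{14}(\omega)$ and $\int_{-1}^0\|\w(t)\|_{\widetilde{\L}^{r+1}}^{r+1}\,\d t\leq\kappa_{14}(\omega)$, the Gronwall bound \eqref{H_ab3} applied at arbitrary $t\in[-1,0]$ gives $\sup_{t\in[-1,0]}\|\v_\varepsilon(t)\|_\H$ bounded, and by \eqref{O-U_conti} the maps $t\mapsto\|\z(t)\|_\V,\|\A\z(t)\|_\H$ lie in $\mathrm{L}^\infty(-1,0)$. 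Consequently $\int_{-1}^0 g\,\d t$ and $\int_{-1}^0 h\,\d t$ are finite for every $s\leq-t_{\mathrm D}(\omega)-1$, and the uniform Gronwall lemma yields
\begin{align*}
\|\v_\varepsilon(0)\|_\V^2\leq\Big(\kappa_{14}(\omega)+\int_{-1}^0 h(t)\,\d t\Big)\exp\Big(\int_{-1}^0 g(t)\,\d t\Big)=:[\kappa_{21}(\omega)]^2,
\end{align*}
uniformly in $s\leq-t_{\mathrm D}(\omega)-1$, whence $\|\u_\varepsilon(0)\|_\V\leq\kappa_{21}(\omega)+\varepsilon\|\z(\omega)(0)\|_\V=:\kappa(\omega)$.

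It remains to check that $\kappa$ is a legitimate $\mathfrak{DK}$-radius. Since a ball in $\V$ of radius $\kappa(\omega)$ has $\H$-radius at most $\lambda_1^{-1/2}\kappa(\omega)$ and $\mathfrak{K}$ is stable under multiplication by constants, it suffices to verify the $\mathfrak K$-condition of Definition \ref{RA2} for $\kappa$. Evaluating $\kappa$ at $\theta_{-t}\omega$ shifts every occurrence of $\z$ backwards by $t$, and by the sublinear growth \eqref{rho} the resulting expression grows at most polynomially in $t$; meanwhile the weight $e^{-\mu\lambda_1 t+\frac8\mu\int_{-t}^0\|\z(\omega)(s)\|_\V^2\,\d s}$ is dominated by $e^{-\mu\lambda_1 t/2}$ for large $t$ by Corollary \ref{Bddns1}, so the product tends to $0$ and $\kappa\in\mathfrak K$. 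Setting $\mathrm B_1(\omega):=\{\u\in\V:\|\u\|_\V\leq\kappa(\omega)\}$ then produces the desired $\mathfrak{DK}$-absorbing family in $\V$. I expect the genuine obstacle to be the Forchheimer correction $\varepsilon\beta(\mathcal{C}(\w),\A\z)$ for large $r$: one must bound it by the nonnegative dissipation $\beta(\mathcal{C}(\w),\A\w)$ plus an integrable remainder, which for $r>3$ seems to require the extra space-time integrability of the strong solution (the $\mathrm{L}^{r+1}(0,T;\widetilde{\L}^{3(r+1)})$ regularity) rather than the $\V$- and $\widetilde{\L}^{r+1}$-bounds alone; a secondary point is ensuring the coefficient $g$ is integrable over $(-1,0)$ and not merely finite pointwise.
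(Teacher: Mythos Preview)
Your overall architecture---test \eqref{cscbf} against $\A\v_\varepsilon$, absorb pieces of $\|\A\v_\varepsilon\|_\H^2$, and close with the uniform Gronwall lemma on $(-1,0)$ fed by the bounds from Theorem~\ref{H_ab}---is exactly what the paper does. The convective, linear and forcing estimates and the Gronwall step are fine.

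The genuine gap is your handling of the Forchheimer term. You write $-\beta(\mathcal{C}(\w),\A\v_\varepsilon)=-\beta(\mathcal{C}(\w),\A\w)+\varepsilon\beta(\mathcal{C}(\w),\A\z)$ and claim that $(\mathcal{C}(\w),\A\w)\geq 0$ ``by integration by parts''. This is precisely the step that fails on a bounded domain with Dirichlet conditions: since $\A=-\mathrm{P}_\H\Delta$ and $\mathrm{P}_\H$ does \emph{not} commute with $-\Delta$ here, one has $(|\w|^{r-1}\w,\A\w)=(|\w|^{r-1}\w,-\Delta\w)-(|\w|^{r-1}\w,\nabla q)$ for the pressure-like corrector $q$, and the second term has no sign because $|\w|^{r-1}\w$ is not divergence-free. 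The introduction of the paper explicitly flags this difficulty (the remark about \cite{You,KZ,MTM}); your ``nonnegative Forchheimer dissipation'' is simply unavailable in this setting, and with it your plan to dominate $\varepsilon\beta(\mathcal{C}(\w),\A\z)$ collapses. Your closing worry about $r>3$ is therefore misdiagnosed: the obstruction is not integrability of the correction but the loss of sign of the ``good'' term itself.

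The fix the paper uses is purely two-dimensional and avoids any sign argument: estimate the whole term brutally by H\"older and Young,
\[
\big|\beta(\mathcal{C}(\w),\A\v_\varepsilon)\big|\leq \beta\|\w\|_{\widetilde{\L}^{2r}}^{r}\|\A\v_\varepsilon\|_\H\leq \tfrac{\mu}{8}\|\A\v_\varepsilon\|_\H^2+C\|\w\|_{\widetilde{\L}^{2r}}^{2r},
\]
and then invoke the 2D Gagliardo--Nirenberg inequality \eqref{Gen_lady} to get $\|\w\|_{\widetilde{\L}^{2r}}^{2r}\leq C\|\w\|_\H^{2r-2}\|\w\|_\V^{2}$. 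This produces an extra coefficient $C\|\w\|_\H^{2r-2}$ in $g$ (bounded on $(-1,0)$ by the $\H$-absorbing estimate) and feeds straight into the uniform Gronwall lemma, for all $r\geq 1$, with no appeal to $\mathrm{L}^{r+1}(0,T;\widetilde{\L}^{3(r+1)})$-regularity. Replacing your Forchheimer step by this estimate makes the rest of your argument go through essentially as written.
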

	\begin{proof}
		Let $\omega\in \Omega$ be fixed. For given $s\leq 0$ and $\u_0\in \H$, let $\v_\varepsilon(\cdot)$ be the unique solution of \eqref{cscbf} on time interval $[s, \infty)$ with the initial condition $\v_\varepsilon(s)= \u_0-\varepsilon\z(s).$
		Multiplying first equation of \eqref{cscbf} by $\A\v_\varepsilon(\cdot)$ and then integrating the resulting equation over $\mathcal{O}$, we obtain
		\begin{align}\label{V_ab1}
		\frac{1}{2}\frac{\d}{\d t}\|\v_\varepsilon(t)\|^2_{\V} +\mu\|\A\v_\varepsilon(t)\|^2_{\H}= &  -b(\v_\varepsilon(t)+\varepsilon\z(t),\v_\varepsilon(t)+\varepsilon\z(t),\A\v_\varepsilon(t))\nonumber\\&-\beta(\mathcal{C}\big(\v_\varepsilon(t)+\varepsilon\z(t)\big),\A\v_\varepsilon(t))  +\varepsilon\alpha\big(\z(t), \A\v_\varepsilon(t)\big) \nonumber\\&+\big(\f,\A\v_\varepsilon(t)\big).
		\end{align}
		Using $0<\varepsilon\leq1$, \eqref{b1}, H\"older's and Young's inequalities, we estimate
		\begin{align}
		\big|b(\v_\varepsilon+\varepsilon\z,\v_\varepsilon+\varepsilon\z,\A\v_\varepsilon)\big|&\leq C \|\v_\varepsilon+\varepsilon\z\|^{\frac{1}{2}}_{\H}\|\v_\varepsilon+\varepsilon\z\|_{\V} \|\A\v_\varepsilon+\varepsilon\A\z\|^{\frac{1}{2}}_{\H}\|\A\v_\varepsilon\|_{\H} \nonumber\\
		&\leq C \|\v_\varepsilon+\varepsilon\z\|^{\frac{1}{2}}_{\H}\|\v_\varepsilon+\varepsilon\z\|_{\V} \big(\|\A\v_\varepsilon\|^{\frac{1}{2}}_{\H}+\varepsilon^{\frac{1}{2}}\|\A\z\|^{\frac{1}{2}}_{\H}\big)\|\A\v_\varepsilon\|_{\H} \nonumber\\
		&\leq\frac{\mu}{8}\|\A\v_\varepsilon\|^2_{\H} + C\|\v_\varepsilon+\varepsilon\z\|_{\H}^2\|\v_\varepsilon+\varepsilon\z\|^4_{\V}\nonumber\\&\quad+\varepsilon C\|\v_\varepsilon+\varepsilon\z\|_{\H}\|\v_\varepsilon+\varepsilon\z\|^2_{\V} \|\A\z\|_{\H}\nonumber\\
		&\leq\frac{\mu}{8}\|\A\v_\varepsilon\|^2_{\H} +C\|\v_\varepsilon+\varepsilon\z\|_{\H}^2\|\v_\varepsilon\|^4_{\V}+\varepsilon C\|\v_\varepsilon+\varepsilon\z\|_{\H} \|\A\z\|_{\H}\|\v_\varepsilon\|^2_{\V}\nonumber\\
		&\quad +\varepsilon^4C\|\v_\varepsilon+\varepsilon\z\|_{\H}^2\|\z\|^4_{\V}+\varepsilon^3C\|\v_\varepsilon+\varepsilon\z\|_{\H} \|\A\z\|_{\H}\|\z\|^2_{\V},\label{V_ab2}\\
		\big|(\mathcal{C}\big(\v_\varepsilon+\varepsilon\z\big),\A\v_\varepsilon)\big|&\leq \|\v_\varepsilon+\varepsilon\z\|^{r}_{\wi \L^{2r}}\|\A\v_\varepsilon\|_{\H}\leq \frac{\mu}{8}\|\A\v_\varepsilon\|^2_{\H} + C\|\v_\varepsilon+\varepsilon\z\|^{2r}_{\wi \L^{2r}}\nonumber\\&\leq \frac{\mu}{8}\|\A\v_\varepsilon\|^2_{\H} + C\|\v_\varepsilon+\varepsilon\z\|^{2r-2}_{\H}\|\v_\varepsilon+\varepsilon\z\|^2_{\V}\nonumber\\&\leq \frac{\mu}{8}\|\A\v_\varepsilon\|^2_{\H} + C\|\v_\varepsilon+\varepsilon\z\|^{2r-2}_{\H}\|\v_\varepsilon\|^2_{\V}+\varepsilon^2C\|\v_\varepsilon+\varepsilon\z\|^{2r-2}_{\H}\|\z\|^2_{\V},\label{V_ab3}\\
		\varepsilon\alpha|\big(\z,\A\v_\varepsilon\big)|&\leq\varepsilon \alpha \|\z\|_{\H}\|\A\v_\varepsilon\|_{\H}	\leq\frac{\mu}{8}\|\A\v_\varepsilon\|^2_{\H} + \varepsilon^2C\|\z\|^2_{\H}\leq\frac{\mu}{8}\|\A\v_\varepsilon\|^2_{\H} + \varepsilon^2C\|\z\|^2_{\V},\label{V_ab4}\\
		|	\big(\f, \A\v_\varepsilon\big)|&\leq \|\f\|_{\H}\|\A\v_\varepsilon\|_{\H}\leq\frac{\mu}{8}\|\A\v_\varepsilon\|^2_{\H} + C\|\f\|^2_{\H}.\label{V_ab5}
		\end{align}
		We infer from the inequalities \eqref{V_ab2}-\eqref{V_ab5} that for any $\omega\in\Omega$,
		\begin{align}\label{V_ab7}
		\frac{\d}{\d t}\|\v_\varepsilon(t)\|^2_{\V} +\mu\|\A\v_\varepsilon(t)\|^2_{\H} 
		&\leq\bigg[C\|\v_\varepsilon(t)+\varepsilon\z(t)\|_{\H}^2\|\v_\varepsilon(t)\|^2_{\V} +\varepsilon C\|\v_\varepsilon(t)+\varepsilon\z(t)\|_{\H}\|\A\z(t)\|_{\H}\nonumber\\&\quad+C\|\v_\varepsilon(t)+\varepsilon\z(t)\|^{2r-2}_{\H} \bigg]\|\v_\varepsilon(t)\|^2_{\V} +\varepsilon^4 C\|\v_\varepsilon(t)+\varepsilon\z(t)\|_{\H}^2\|\z(t)\|^4_{\V}\nonumber\\&\quad+\varepsilon^3C\|\v_\varepsilon(t)+\varepsilon\z(t)\|_{\H} \|\A\z(t)\|_{\H}\|\z(t)\|^2_{\V} \nonumber\\&\quad+\varepsilon^2 C\|\v_\varepsilon(t)+\varepsilon\z(t)\|^{2r-2}_{\H}\|\z(t)\|^2_{\V} +\varepsilon^2C\|\z(t)\|^2_{\V}+C\|\f\|^2_{\H}.
		\end{align}
		Thus, it is immediate that 
		\begin{align}
		\frac{\d}{\d t}\|\v_\varepsilon(t)\|^2_{\V} 
		&\leq S_1(t)\|\v_\varepsilon(t)\|^2_{\V} +S_2(t),\label{V_ab8}
		\end{align}
		where
		\begin{align}
		S_1&=	C\|\v_\varepsilon+\varepsilon\z\|_{\H}^2\|\v_\varepsilon\|^2_{\V} +\varepsilon C\|\v_\varepsilon+\varepsilon\z\|_{\H}\|\A\z\|_{\H}+C\|\v_\varepsilon+\varepsilon\z\|^{2r-2}_{\H}, \label{V_ab9}\\
		S_2&= \varepsilon^4 C\|\v_\varepsilon+\varepsilon\z\|_{\H}^2\|\z\|^4_{\V}+\varepsilon^3C\|\v_\varepsilon+\varepsilon\z\|_{\H} \|\A\z\|_{\H}\|\z\|^2_{\V}+\varepsilon^2 C\|\v_\varepsilon+\varepsilon\z\|^{2r-2}_{\H}\|\z\|^2_{\V}  \nonumber\\&\quad+\varepsilon^2C\|\z\|^2_{\V}+C\|\f\|^2_{\H}.\label{V_ab10}
		\end{align}
		From Theorem \ref{H_ab}, we get for any $\varepsilon\in(0,1]$ and for all $t\in[-1,0]$,
		\begin{align}\label{H_ab6}
		\|\v_\varepsilon(t,s;\omega,\u_0-\z(s))\|_{\H}\leq\kappa_{11}(\omega),
		\end{align}
		for any $s\leq-(t_{\mathrm{D}}(\omega)+1).$ Therefore, for $s\leq-(t_{\D}(\omega)+1)$,  using \eqref{O-U_conti},  \eqref{H_ab4} and \eqref{H_ab6}, we obtain  
		$$\int_{-1}^{0}\|\v_\varepsilon(t)\|^2_{\V}\d t<\infty,\quad\int_{-1}^{0}S_1(t)\d t<\infty, \quad\int_{-1}^{0}S_2(t)\d t<\infty.$$
		Hence, by the uniform Gronwall lemma (Lemma 1.1 \cite{R.Temam}), we infer that for any $\varepsilon\in(0,1]$ and for any $\omega\in\Omega,$ there exists $\kappa_{15}(\omega)\geq0$ such that 
		\begin{align}\label{ab_V}
		\|\v_\varepsilon(0,\omega; s, \u_0-\z(s))\|_{\V}\leq \kappa_{15}(\omega),
		\end{align}
		for any $s\leq-(t_{\mathrm{D}}(\omega)+1)$.  Moreover, integrating \eqref{V_ab7} over $(-1,0)$, we find for any $\varepsilon\in(0,1]$ and for any $\omega\in \Omega$, there exists $\kappa_{16}(\omega)\geq 0$ such that
		\begin{align}\label{V_ab_12}
		\mu\int_{-1}^{0}&\|\A\v_\varepsilon(t)\|^2_{\H} \d t \leq \kappa_{16}(\omega),
		\end{align}
		for any $s\leq-(t_{\mathrm{D}}(\omega)+1).$
	\end{proof}
	Thanks to the compactness of $\V$ in $\H$, from Theorems \ref{H_ab} and  \ref{V_ab} and the abstract theory of random attractors (Theorem 3.11, \cite{CF}), we immediately conclude the following result.
	
	\begin{theorem}\label{Main_Theoem_H}
		Suppose that $\f\in\H$ and Assumption \ref{assump} holds. Then the cocycle $\varphi_\varepsilon$ corresponding to the 2D SCBF equations with small additive noise \eqref{S-CBF} has a random attractor $\hat{\mathcal{A}}_{\varepsilon} = \{\mathbf{A}_{\varepsilon}(\omega): \omega\in \Omega\}$ in $\H$. 
	\end{theorem}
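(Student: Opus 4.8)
The plan is to deduce the statement directly from the abstract existence theorem for random attractors, namely Theorem 3.11 of \cite{CF}, which guarantees that a continuous RDS possessing a random compact $\mathfrak{DK}$-absorbing set admits a unique random $\mathfrak{DK}$-attractor. Thus the whole argument reduces to verifying the two hypotheses of that abstract result for the cocycle $\varphi_\varepsilon$ over the metric DS $(\Omega, \hat{\mathscr{F}}, \hat{\mathbb{P}}, \hat{\theta})$: continuity of $\varphi_\varepsilon$ and the existence of a compact absorbing set in $\H$.

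First I would record that $\varphi_\varepsilon$ is indeed a continuous RDS; the continuity (in fact in stronger topologies) is the content of Theorems \ref{RDS_Conti1} and \ref{RDS_Conti}, while the cocycle property was already checked. Next comes the key step: upgrading the $\V$-absorbing family supplied by Theorem \ref{V_ab} into a \emph{compact} absorbing set in $\H$. The family $\hat{\mathrm{B}}_1 = \{\mathrm{B}_1(\omega):\omega\in\Omega\}$ is a $\mathfrak{DK}$-random absorbing set bounded in $\V$; since the embedding $\V\hookrightarrow\H$ is compact, the closure of $\mathrm{B}_1(\omega)$ in $\H$ is compact for each $\omega$. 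One must check that this closure remains absorbing and still belongs to the class $\mathfrak{DK}$, which is immediate: the set $\mathrm{B}_0$ from Theorem \ref{H_ab} already absorbs in $\H$, and the uniform $\V$-bound of Theorem \ref{V_ab} provides precisely the asymptotic compactness needed to pass from absorption to attraction.

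With these two ingredients in hand, the abstract theorem applies verbatim and yields a random attractor $\hat{\mathcal{A}}_\varepsilon = \{\mathbf{A}_\varepsilon(\omega):\omega\in\Omega\}$ in $\H$, which is a compact random set, $\varphi_\varepsilon$-invariant and $\mathfrak{DK}$-attracting. I do not expect any genuine obstacle at this stage: the substantive work has already been carried out in Theorems \ref{H_ab} and \ref{V_ab} (the uniform-in-$\varepsilon$ energy estimates in $\H$ and $\V$ producing the two absorbing families) and in the continuity Theorems \ref{RDS_Conti1} and \ref{RDS_Conti}. The only points demanding a little care are confirming that the measurability requirements of the abstract framework are met and that the closed $\V$-ball, once taken in $\H$, stays in the attracting class $\mathfrak{DK}$; both follow from Proposition \ref{radius} together with the stability of $\mathfrak{K}$ under sums, scalar multiples, and domination.
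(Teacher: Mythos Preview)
Your proposal is correct and follows essentially the same route as the paper: the paper simply observes that, by the compact embedding $\V\hookrightarrow\H$, the absorbing families from Theorems \ref{H_ab} and \ref{V_ab} together with the abstract existence result (Theorem 3.11 of \cite{CF}) immediately yield the random attractor in $\H$. Your additional remarks on measurability and on the absorbing family remaining in $\mathfrak{DK}$ are reasonable checks but do not depart from the paper's approach.
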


	\section{Upper semi-continuity of $\mathfrak{DK}$-random attractors in $\H$}\label{sec5}\setcounter{equation}{0}
	In this section, we prove the upper semicontinuity of random attractors in $\H$. The existence of random attractors for the stochastic system \eqref{S-CBF} in $\H$ is proved in Theorem \ref{Main_Theoem_H} and the existence of global attractors for the deterministic system \eqref{D-CBF}  in $\H$ is established in Theorem 3.7, \cite{MTM1}. Upper semicontinuity results for the stochastic Navier-Stokes equations is obtained in \cite{CLR} and stochastic Cahn-Hilliard-Navier-Stokes system is established in \cite{FLYB}. Now, using the similar techniques in the work \cite{CLR}, we state and prove the following theorem on the upper semicontinuity of the random attractors: 
	\begin{theorem}\label{USC}
		Suppose that $\f\in\H$ and Assumption \ref{assump} is satisfied. Also, assume that the deterministic system \eqref{D-CBF} has a global attractor $\hat{\mathcal{A}}$ and its small random perturbed dynamical system \eqref{S-CBF} possesses a $\mathfrak{DK}$-random attractor $\hat{\mathcal{A}}_\varepsilon= \{\mathbf{A}_\varepsilon(\omega):\omega\in \Omega\},$ for any $\varepsilon\in (0, 1].$ If the following conditions hold:
		\begin{itemize}
			\item [$(K_1)$] For each $t_0\geq0$ and for $\hat{\mathbb{P}}$-a.e. $\omega\in\Omega$ $$\lim_{\varepsilon\to0^+} d\big(\varphi_\varepsilon(t_0, \theta_{-t_0}\omega)\u_0,\S(t)\u_0\big)=0,$$
			uniformly on bounded sets of $\H$, where $\varphi_\varepsilon$ is a RDS and $\S(t)$ is a semigroup generated by \eqref{S-CBF} and \eqref{D-CBF}, respectively with same initial condition $\u_0$.  
			\item[$(K_2)$] There exists a compact set $K\subset \H$ such that 
			$$\lim_{\varepsilon\to 0^+} d(\mathbf{A}_{\varepsilon}(\omega), K)=0,$$for $\hat{\mathbb{P}}$-a.e. $\omega\in\Omega.$
		\end{itemize}
		Then $\hat{\mathcal{A}}_\varepsilon$ and $\hat{\mathcal{A}}$ have the property of upper semi-continuity, that is,\begin{align}\label{51}\lim_{\varepsilon\to 0^+} d(\hat{\mathcal{A}}_{\varepsilon}(\omega), \hat{\mathcal{A}})=0,\end{align}for $\hat{\mathbb{P}}$-a.e. $\omega\in\Omega.$

		Furthermore, if for $\varepsilon_0\in (0,1]$ we have that for $\hat{\mathbb{P}}$-a.e. $\omega\in\Omega$ and all $t_0>0$ \begin{align}\label{K_3}
		\varphi_{\varepsilon}(t_0, \theta_{-t_0}\omega)\u_0 \to \varphi_{\varepsilon_0}(t_0, \theta_{-t_0}\omega)\u_0\ \text{ as }\ \varepsilon\to\varepsilon_0,
		\end{align} uniformly on bounded sets of $\H$, then the convergence \eqref{K_3} is upper semicontinuous in $\varepsilon$, that is, \begin{align}\label{52}
		\lim_{\varepsilon\to\varepsilon_0} d(\hat{\mathcal{A}}_{\varepsilon}(\omega),\hat{\mathcal{A}}_{\varepsilon_0}(\omega)) = 0,
		\end{align}for $\hat{\mathbb{P}}$-a.e. $\omega\in\Omega.$
		
	\end{theorem}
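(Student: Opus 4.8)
The plan is to prove both \eqref{51} and \eqref{52} by contradiction, using the $\varphi_\varepsilon$-invariance of the random attractors to trace points backward in time and then sending the noise parameter to its limit \emph{before} sending the pullback time to infinity. For \eqref{51}, suppose it fails on the full-measure set where $(K_1)$ and $(K_2)$ hold: there are $\eta>0$, a sequence $\varepsilon_n\to0^+$, and points $x_n\in\mathbf{A}_{\varepsilon_n}(\omega)$ with $d(x_n,\hat{\mathcal{A}})\geq\eta$. By $(K_2)$ the $x_n$ lie within a vanishing distance of the \emph{compact} set $K$, so after passing to a subsequence $x_n\to y\in K$.

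Now fix $t_0>0$. Invariance (Definition \ref{RA}(ii)) gives $x_n=\varphi_{\varepsilon_n}(t_0,\theta_{-t_0}\omega)z_n$ with $z_n\in\mathbf{A}_{\varepsilon_n}(\theta_{-t_0}\omega)$; applying $(K_2)$ at $\theta_{-t_0}\omega$ keeps $\{z_n\}$ near $K$, so along a further subsequence $z_n\to z\in K$. Since $(K_1)$ is uniform on the bounded set containing $\{z_n\}$ and $\S(t_0)$ is continuous, a triangle inequality gives $\varphi_{\varepsilon_n}(t_0,\theta_{-t_0}\omega)z_n\to\S(t_0)z$, whence $y=\S(t_0)z\in\S(t_0)K$; as $t_0>0$ was arbitrary and $y$ is fixed, this holds for every $t_0$. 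Because $K$ is bounded and $\hat{\mathcal{A}}$ attracts bounded sets, $d(\S(t_0)K,\hat{\mathcal{A}})\to0$ as $t_0\to\infty$, so $d(y,\hat{\mathcal{A}})\leq d(\S(t_0)K,\hat{\mathcal{A}})\to0$ and $y\in\hat{\mathcal{A}}$, contradicting $d(y,\hat{\mathcal{A}})\geq\eta$. This proves \eqref{51}.

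For \eqref{52} I would run the identical scheme with $\S(t_0)$ replaced by the fixed cocycle $\varphi_{\varepsilon_0}(t_0,\theta_{-t_0}\omega)$ and $(K_1)$ replaced by \eqref{K_3}. The one genuinely new point is compactness \emph{uniform in} $\varepsilon$ near $\varepsilon_0$, which $(K_2)$---a statement only about $\varepsilon\to0^+$---does not supply; here I would use that the $\V$-absorbing estimates of Theorem \ref{V_ab} are bounded uniformly for $\varepsilon\in(0,1]$, so every $\mathbf{A}_\varepsilon(\omega)$ lies in one bounded subset of $\V$, precompact in $\H$ by the compact embedding $\V\hookrightarrow\H$. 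This furnishes the $\H$-convergent subsequences of $x_n$ and $z_n$. Then \eqref{K_3} together with the $\H$-continuity of $\varphi_{\varepsilon_0}(t_0,\theta_{-t_0}\omega)$ (Theorem \ref{RDS_Conti1}) gives $y=\varphi_{\varepsilon_0}(t_0,\theta_{-t_0}\omega)z$, and letting $t_0\to\infty$ the pullback attraction of the $\mathfrak{DK}$-attractor $\mathbf{A}_{\varepsilon_0}$ (Definition \ref{RA}(iii)) forces $d(y,\mathbf{A}_{\varepsilon_0}(\omega))=0$, contradicting the lower bound.

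The delicate step throughout is the order of limits: for each \emph{fixed} $t_0$ one must first send $\varepsilon_n$ to its limit along subsequences to identify $y$ as the image of a point of $K$ under $\S(t_0)$ (resp.\ under $\varphi_{\varepsilon_0}(t_0,\theta_{-t_0}\omega)$), and only afterwards let $t_0\to\infty$ to invoke attraction. The uniform-on-bounded-sets character of $(K_1)$ and \eqref{K_3} is precisely what lets the $\varepsilon$-limit be taken simultaneously with the convergence of the moving initial data $z_n$; this, together with securing $\varepsilon$-uniform compactness for \eqref{52}, is where the argument must be carried out with care.
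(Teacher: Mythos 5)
Your argument is correct as a proof of the conditional that the theorem literally asserts, but it travels a genuinely different road from the paper. You prove the abstract implication $(K_1)\wedge(K_2)\Rightarrow\eqref{51}$ by the contradiction-plus-invariance scheme (pull $x_n\in\mathbf{A}_{\varepsilon_n}(\omega)$ back through $\varphi_{\varepsilon_n}(t_0,\theta_{-t_0}\omega)$, pass to the $\varepsilon$-limit for fixed $t_0$ using the uniformity of $(K_1)$, then let $t_0\to\infty$ and invoke attraction); this is essentially the Caraballo--Langa--Robinson argument. The paper does the opposite: it takes that implication as known from \cite{CLR} and spends the entire proof \emph{verifying} that $(K_1)$, $(K_2)$ and \eqref{K_3} actually hold for the 2D SCBF system --- $(K_2)$ via the $\V$-absorbing balls of Theorems \ref{H_ab} and \ref{V_ab} and the compact embedding $\V\hookrightarrow\H$, and $(K_1)$ and \eqref{K_3} via energy estimates on $\eta_\varepsilon=\u_\varepsilon-\u-\varepsilon\z$ and $\varrho=\u_\varepsilon-\u_{\varepsilon_0}-\varepsilon^*\z$ together with Gronwall's inequality and the monotonicity \eqref{MO_c} of $\mathcal{C}$. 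So within the paper the two proofs are complementary rather than interchangeable: yours establishes the dynamical-systems lemma but leaves the PDE hypotheses unchecked, while the paper's checks the hypotheses and outsources the lemma. Your treatment of \eqref{52} is the more careful of the two --- the paper merely says the conclusion ``follows immediately'' from \eqref{K_3}, whereas you correctly observe that $(K_2)$ gives no compactness near $\varepsilon_0$ and supply it from the $\varepsilon$-uniform $\V$-bounds of Theorem \ref{V_ab} (noting that closed $\V$-balls are closed in $\H$, so each $\mathbf{A}_\varepsilon(\omega)$ sits in one fixed $\H$-compact set, and that deterministic bounded sets belong to $\mathfrak{DK}$ so the attraction in Definition \ref{RA}(iii) applies). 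Two small points to make explicit if you write this up: applying $(K_2)$ at the shifted fibers $\theta_{-t_0}\omega$ requires restricting to a countable sequence $t_0\to\infty$ and using that $\theta_{t}$ preserves $\hat{\mathbb{P}}$ so the relevant full-measure set can be taken $\theta$-invariant; and the final contradiction needs $d(y,\hat{\mathcal{A}})\geq\eta$, which follows from $x_n\to y$ and the continuity of $x\mapsto d(x,\hat{\mathcal{A}})$.
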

	\begin{proof}
		To prove the property of upper semicontinuity for our system, we only need to verify the conditions $(K_1)$ and $(K_2)$.
		\vskip 0.2cm
		\noindent\textbf{Step I.} \emph{Verification of $(K_2)$:} Let us introduce $$\v_\varepsilon(t,\omega)=\u_\varepsilon(t, \omega)-\varepsilon\z(t, \omega),$$ where $\u_\varepsilon(t, \omega)$ and $\z(t,\omega)$ are the unique solutions of \eqref{S-CBF} and \eqref{OUPe}, respectively. Also from \eqref{O-U_conti}, we have $\z\in\mathrm{L}^{\infty}_{loc}([t_0,\infty);\V)\cap\mathrm{L}^2_{loc}([t_0,\infty);\D(\A))$. Clearly, $\v_\varepsilon$ satisfies
		\begin{equation}\label{cscbf_s}
		\begin{aligned}
		\frac{\d\v_\varepsilon}{\d t} &= -\mu \A\v_\varepsilon - \B(\v_\varepsilon + \varepsilon\z) - \beta \mathcal{C}(\v_\varepsilon + \varepsilon\z) + \varepsilon\alpha \z + \f. 
		\end{aligned}
		\end{equation}
		From Theorems \ref{H_ab} and \ref{V_ab}, we observe that there exists $\hat{\kappa}_\varepsilon(\omega)\in\mathfrak{K}$-class such that $$\|\v_\varepsilon(0)\|_{\V}\leq\hat{\kappa}_\varepsilon(\omega).$$ If we call $K_\varepsilon(\omega),$ the ball in $\V$ of radius $\hat{\kappa}_\varepsilon(\omega)+\varepsilon\|\z(0)\|_{\V}$, we have a compact (since $\V\hookrightarrow\H$ is compact) $\mathfrak{DK}$-absorbing set in $\H$ for $\varphi_\varepsilon$. Furthermore, there exists a $\hat{\kappa}_d$ independent of $\omega\in \Omega$ such that $$\lim_{\varepsilon\to0^+} \hat{\kappa}_\varepsilon(\omega)\leq\hat{\kappa}_d,$$which verifies  Lemma 1, \cite{CLR} and hence $(K_2)$ follows.
		\vskip 0.2cm
		\noindent
		\textbf{Step II.} \emph{Verification of $(K_1)$:} In order to verify the assertion $(K_1)$, it is enough to prove that the solution $\varphi_\varepsilon(t,\omega)\u_0$ of system \eqref{S-CBF} $\hat{\mathbb{P}}$-a.s. converges to the solution $\S(t)\u_0$ of the unperturbed system \eqref{D-CBF} in $\H$ as $\varepsilon\to 0^+$ uniformly on bounded sets of initial conditions, that is, for $\hat{\mathbb{P}}$-a.e $\omega\in\Omega$, any $t_0\geq0$ and any bounded subset $\G\subset\H$, we have 
		
		\begin{align}\label{usc}
		\lim_{\varepsilon\to 0^+}\sup_{\u_0\in \G} \|\varphi_\varepsilon(t_0,\theta_{-t_0}\omega)\u_0-\S(t_0)\u_0\|_{\H}=0.
		\end{align}
		For any $\u_0\in \G,$ let $\u_\varepsilon(t)=\varphi_{\varepsilon}(t+t_0,\theta_{-t_0})\u_0$ and $\u(t)=\S(t+t_0)\u_0$ respectively, be the unique weak solutions of the systems \eqref{S-CBF} and \eqref{D-CBF} with initial condition $\u_0$ at $t=-t_0$. Also, let for $T\geq 0$,$$\y_\varepsilon(t)= \u_\varepsilon(t)-\u(t), \ \ t\in[-t_0,T].$$ Clearly $\y_\varepsilon(\cdot)$ satisfies
		\begin{equation}\label{cSCBF_y}
		\left\{
		\begin{aligned}
		\d\y_\varepsilon+\{\mu \A\y_\varepsilon+\B(\y_\varepsilon+\u)-\B(\u)+\beta \mathcal{C}(\y_\varepsilon+\u)-\beta\mathcal{C}(\u)\}\d t&= \varepsilon\d\text{W}(t),  \\ 
		\y_\varepsilon(-t_0)&=0,
		\end{aligned}
		\right.
		\end{equation}
		in $\V'$  for all $ t\in[-t_0,T]$.	Let us introduce $\eta_\varepsilon(\cdot)=\y_\varepsilon(\cdot)-\varepsilon\z(\cdot)$, where $\z(\cdot)$ is the solution of \eqref{OUPe}, then $\eta_\varepsilon(\cdot)$ satisfies the following equation in $\V'$:
		\begin{equation}\label{cscbf_eta1}
		\left\{
		\begin{aligned}
		\frac{\d\eta_\varepsilon}{\d t} &= -\mu \A\eta_\varepsilon - \B(\eta_\varepsilon + \varepsilon\z+\u)+\B(\u) - \beta \mathcal{C}(\eta_\varepsilon + \varepsilon\z +\u)+\beta\mathcal{C}(\u) +\varepsilon \alpha \z, \\
		\eta_\varepsilon(-t_0)&= -\varepsilon \z(-t_0).
		\end{aligned}
		\right.
		\end{equation}
		or
		\begin{equation}\label{cscbf_eta2}
		\left\{
		\begin{aligned}
		\frac{\d\eta_\varepsilon}{\d t} &= -\mu \A\eta_\varepsilon - \B(\eta_\varepsilon,\eta_\varepsilon)-\varepsilon\B(\eta_\varepsilon,\z)-\B(\eta_\varepsilon,\u)-\varepsilon\B(\z,\eta_\varepsilon)-\varepsilon^2\B(\z,\z)-\varepsilon(\z,\u)\\&\quad-\B(\u,\eta_\varepsilon)-\varepsilon\B(\u,\z) - \beta \mathcal{C}(\eta_\varepsilon + \varepsilon\z +\u)+\beta\mathcal{C}(\u) +\varepsilon \alpha \z, \\
		\eta_\varepsilon(-t_0)&= -\varepsilon \z(-t_0).
		\end{aligned}
		\right.
		\end{equation}
		Taking the inner product of the first equation of \eqref{cscbf_eta2} with $\eta_\varepsilon(\cdot)$ in $\H$ and making use of \eqref{b0}, we get 
		\begin{align}\label{usc1}
		\frac{1}{2}\frac{\d}{\d t}\|\eta_\varepsilon(t)\|^2_{\H} =& - \mu\|\eta_\varepsilon(t)\|^2_{\V} -\varepsilon b(\eta_\varepsilon(t),\z(t),\eta_\varepsilon(t))-b(\eta_\varepsilon(t),\u(t),\eta_\varepsilon(t))\nonumber\\&+\varepsilon^2b(\z(t),\eta_\varepsilon(t),\z(t))+\varepsilon b(\z(t),\eta_\varepsilon(t),\u(t))+\varepsilon b(\u(t),\eta_\varepsilon(t),\z(t)) \nonumber\\& -\beta\big\langle\mathcal{C}(\eta_\varepsilon(t)+\varepsilon\z(t)+\u(t))-\mathcal{C}(\u(t)),\eta_\varepsilon(t)+\varepsilon\z(t)+\u(t)-\u(t)\big\rangle \nonumber\\&+ \varepsilon\beta\big\langle\mathcal{C}(\eta_\varepsilon(t)+\varepsilon\z(t)+\u(t))-\mathcal{C}(\u(t)),\z(t)\big\rangle+\varepsilon\alpha\langle\z(t),\eta_\varepsilon(t)\rangle,
		\end{align}
		for a.e. $t\in[-t_0,T]$.
		Making use of H\"older's inequality, Sobolev embedding and Young's inequality, we get 
		\begin{align}\label{usc2}
		|\varepsilon b(\eta_\varepsilon,\z,\eta_\varepsilon)|&\leq\varepsilon\|\eta_\varepsilon\|^2_{\wi \L^4}\|\z\|_{\V}\leq \varepsilon\sqrt{2}\|\eta_\varepsilon\|_{\H}\|\eta_\varepsilon\|_{\V}\|\z\|_{\V}\leq\frac{\mu}{12}\|\eta_\varepsilon\|^2_{\V} +\varepsilon^2C\|\z\|^2_{\V}\|\eta_\varepsilon\|^2_{\H}, \\
		| b(\eta_\varepsilon,\u,\eta_\varepsilon)|&\leq\|\eta_\varepsilon\|^2_{\wi \L^4}\|\u\|_{\V}\leq \sqrt{2}\|\eta_\varepsilon\|_{\H}\|\eta_\varepsilon\|_{\V}\|\u\|_{\V}\leq\frac{\mu}{12}\|\eta_\varepsilon\|^2_{\V} +C\|\u\|^2_{\V}\|\eta_\varepsilon\|^2_{\H}, \\		
		|\varepsilon b(\z,\eta_\varepsilon,\z)|&\leq\varepsilon\|\z\|^2_{\wi \L^4}\|\eta_\varepsilon\|_{\V}\leq\varepsilon C\|\z\|^2_{\V}\|\eta_\varepsilon\|_{\V}\leq\frac{\mu}{12}\|\eta_\varepsilon\|^2_{\V} +\varepsilon^2C\|\z\|^4_{\V},\\
		|\varepsilon b(\z,\eta_\varepsilon,\u)|&\leq\varepsilon\|\z\|_{\wi \L^4}\|\u\|_{\wi \L^4}\|\eta_\varepsilon\|_{\V}\leq\varepsilon C\|\z\|_{\V}\|\u\|_{\V}\|\eta_\varepsilon\|_{\V}\leq\frac{\mu}{12}\|\eta_\varepsilon\|^2_{\V} +\varepsilon^2C\|\z\|^2_{\V}\|\u\|^2_{\V},\\
		|\varepsilon b(\u,\eta_\varepsilon,\z)|&\leq\varepsilon\|\u\|_{\wi \L^4}\|\z\|_{\wi \L^4}\|\eta_\varepsilon\|_{\V}\leq\varepsilon C\|\u\|_{\V}\|\z\|_{\V}\|\eta_\varepsilon\|_{\V}\leq\frac{\mu}{12}\|\eta_\varepsilon\|^2_{\V} +\varepsilon^2C\|\u\|^2_{\V}\|\z\|^2_{\V}.
		\end{align}
		By \eqref{MO_c}, we have 
		\begin{align}\label{usc3}
		-\beta\big\langle\mathcal{C}(\eta_\varepsilon+\varepsilon\z+\u)-\mathcal{C}(\u),\eta_\varepsilon+\varepsilon\z+\u-\u\big\rangle\leq 0.
		\end{align}
		Now, we consider
		\begin{align}
		\big|\varepsilon\beta\big\langle\mathcal{C}(\eta_\varepsilon+\varepsilon\z+\u)-\mathcal{C}(\u),\z\big\rangle\big|&\leq\big|\varepsilon\beta\big\langle\mathcal{C}(\u_\varepsilon),\z\big\rangle\big|+\big|\varepsilon\beta\big\langle\mathcal{C}(\u),\z\big\rangle\big|\nonumber\\&\leq\varepsilon\beta\big\{\|\u_\varepsilon\|^{r}_{\wi \L^{r+1}}+ \|\u\|^{r}_{\wi \L^{r+1}}\big\}\|\z\|_{\wi \L^{r+1}},\label{usc4}
		\end{align}
		and 
		\begin{align}
		\varepsilon\alpha\langle\z(t),\eta_\varepsilon(t)\rangle&\leq\varepsilon\alpha\|\z\|_{\H}\|\eta_\varepsilon\|_{\H}\leq\varepsilon\alpha C\|\z\|_{\V}\|\eta_\varepsilon\|_{\V}\leq\frac{\mu}{12}\|\eta_\varepsilon\|_{\V}^2+\varepsilon^2\alpha^2C\|\z\|^2_{\V}.\label{usc5}
		\end{align}
		Combining \eqref{usc2}-\eqref{usc5} and using in \eqref{usc1}, we deduce that 
		\begin{align*}
		\frac{\d}{\d t}\|\eta_\varepsilon(t)\|^2_{\H}&\leq C\big\{\varepsilon^2\|\z(t)\|^2_{\V}+\|\u(t)\|^2_{\V}\big\}\|\eta_\varepsilon(t)\|^2_{\H}+\varepsilon^2C\big\{\|\z(t)\|^2_{\V}+2\|\u(t)\|^2_{\V}+\alpha^2\big\}\|\z(t)\|^2_{\V}\\&\quad+\varepsilon\beta C\big\{\|\u_\varepsilon(t)\|^{r}_{\wi \L^{r+1}}+ \|\u(t)\|^{r}_{\wi \L^{r+1}}\big\}\|\z(t)\|_{\wi \L^{r+1}},\\
		&\leq C\big\{\varepsilon^2\|\z(t)\|^2_{\V}+\|\u(t)\|^2_{\V}\big\}\|\eta_\varepsilon(t)\|^2_{\H}+\varepsilon^2C\big\{\|\z(t)\|^2_{\V}+2\|\u(t)\|^2_{\V}+\alpha^2\big\}\|\z(t)\|^2_{\V}\\&\quad+\varepsilon\big\{\beta \|\u_\varepsilon(t)\|^{r+1}_{\wi \L^{r+1}}+ \beta\|\u(t)\|^{r+1}_{\wi \L^{r+1}}+ C\|\z(t)\|^{r+1}_{\wi \L^{r+1}}\big\},\\
		&\leq C\big\{\varepsilon^2\|\z(t)\|^2_{\V}+\|\u(t)\|^2_{\V}\big\}\|\eta_\varepsilon(t)\|^2_{\H}+\varepsilon^2C\big\{\|\z(t)\|^2_{\V}+2\|\u(t)\|^2_{\V}+\alpha^2\big\}\|\z(t)\|^2_{\V}\\&\quad+\varepsilon C\big\{\beta  \|\u_\varepsilon(t)\|^{r-1}_{\H}\|\u_{\varepsilon}(t)\|^2_{\V}+ \beta \|\u(t)\|^{r-1}_{\H}\|\u(t)\|^2_{\V}+ \|\z(t)\|^{r+1}_{\V}\big\}, 
		\end{align*}
		for a.e. $t\in[-t_0,T]$.	Now, integrating the above inequality from $-t_0$ to $t$, we obtain
		\begin{align}\label{usc6}
		\|\eta_\varepsilon(t)\|^2_{\H}\leq\|\eta_\varepsilon(-t_0)\|^2_{\H}+C\int_{-t_0}^{t}\upalpha_{\varepsilon}(s)\|\eta_\varepsilon(s)\|^2_{\H}\d s+\varepsilon C\int_{-t_0}^{t}\upbeta_{\varepsilon}(s)\d s,  \text{ for }t\in[-t_0,T],
		\end{align}
		where
		\begin{align*}
		\upalpha_{\varepsilon}=\ &\varepsilon^2\|\z\|^2_{\V}+\|\u\|^2_{\V}\\
		\upbeta_{\varepsilon}=\ &\varepsilon \big\{\|\z\|^2_{\V}+2\|\u\|^2_{\V}+\alpha^2\big\}\|\z\|^2_{\V}+\beta  \|\u_\varepsilon\|^{r-1}_{\H}\|\u_{\varepsilon}\|^2_{\V}+ \beta \|\u\|^{r-1}_{\H}\|\u\|^2_{\V}+ \|\z\|^{r+1}_{\V},
		\end{align*}
		for a.e. $t\in[-t_0,T]$. Then applying the Gronwall inequality, we find
		\begin{align}\label{usc7}
		\|\eta_\varepsilon(t)\|^2_{\H}\leq&\bigg(\varepsilon^2\|\z(-t_0)\|^2_{\H}+\varepsilon C\int_{-t_0}^{t}\upbeta_{\varepsilon}(s)\d s\bigg)e^{\int_{-t_0}^{t}\upalpha_{\varepsilon}(s)\d s}\nonumber\\
		\leq&\bigg(\frac{\varepsilon^2}{\lambda_1}\|\z(-t_0)\|^2_{\V}+\varepsilon C\int_{-t_0}^{t}\upbeta_{\varepsilon}(s)\d s\bigg)e^{\int_{-t_0}^{t}\upalpha_{\varepsilon}(s)\d s}.
		\end{align}
		Now, consider 
		\begin{align*}
		\int_{-t_0}^{t}\upalpha_{\varepsilon}(s)\d s&\leq  \varepsilon^2 (t+t_0)\|\z\|^2_{\mathrm{L}^{\infty}([-t_0,t];\V)}+\|\u\|_{\mathrm{L}^2{([-t_0,t];\V)}}, \\
		\int_{-t_0}^{t} \upbeta_{\varepsilon}(s)\d s&\leq  \varepsilon  \big\{(t+t_0)\|\z\|^2_{\mathrm{L}^{\infty}([-t_0,t]; \V)}+2\|\u\|_{\mathrm{L}^{2}([-t_0,t]; \V)}+(t+t_0) \alpha^2\big\} \|\z\|^2_{\mathrm{L}^{\infty}([-t_0,t]; \V)}\\& \quad+\beta \|\u_{\varepsilon}\|^{r-1}_{\mathrm{L}^{\infty}([-t_0,t];\H)}\|\u_{\varepsilon}\|_{\mathrm{L}^{2}([-t_0,t];\V)}+\beta \|\u\|^{r-1}_{\mathrm{L}^{\infty}([-t_0,t];\H)}\|\u\|_{\mathrm{L}^{2}([-t_0,t];\V)}\\& \quad+ (t+t_0)\|\z\|^{r+1}_{\mathrm{L}^{\infty}([-t_0,t]; \V)}.
		\end{align*}
		Since $\u_\varepsilon\text{ and } \u\in \mathrm{L}^{\infty}_{loc}([-t_0,\infty);\H)\cap\mathrm{L}^{2}_{loc}([-t_0,\infty);\V), \text{ and }\z\in\mathrm{L}^{\infty}_{\mathrm{loc}}([-t_0,\infty);\V)$,  therefore $\int_{-t_0}^{t}\upbeta_\varepsilon(s)\d s$ and $\int_{-t_0}^{t}\upalpha_\varepsilon(s)\d s$ both are finite. Hence, by \eqref{usc7}, we immediately have  $$\lim_{\varepsilon\to 0^+}\|\eta_\varepsilon(t)\|^2_{\H}=0,$$ which completes the proof of \eqref{usc} by taking $t=0$. Hence $(K_2)$ is verified.
		
		Since both $(K_1)$ and $(K_2)$ conditions hold for our model, the property of upper semicontinuity \eqref{51} holds true in $\H$.
		\vskip 2mm
		\noindent
		\textbf{Step III.} \emph{Proof of \eqref{K_3}:} In order to prove \eqref{K_3}, it is enough to prove that for any bounded subset $\G\subset \H,$ we have 
		\begin{align}\label{usc8}
		\lim_{\varepsilon\to\varepsilon_0}\sup_{\u_0\in \G} \|\varphi_{\varepsilon}(t_0, \theta_{-t_0}\omega)\u_0-\varphi_{\varepsilon_0}(t_0, \theta_{-t_0}\omega)\u_0\|_{\H}=0
		\end{align}
		For any $\u_0\in \G,$ let us take $\u_\varepsilon(t)=\varphi_{\varepsilon}(t+t_0,\theta_{-t_0})\u_0$ and $\u_{\varepsilon_0}(t)=\varphi_{\varepsilon_0}(t+t_0,\theta_{-t_0})\u_0$. Let $\u_{\varepsilon}(\cdot)$ be the unique weak solution of the system \eqref{S-CBF} and $\u_{\varepsilon_0}(\cdot)$ be the unique weak solution of the system \eqref{S-CBF} when $\varepsilon$ replaced by $\varepsilon_0$, with initial condition $\u_0$ at $t=-t_0$. Also, let $$\w(t)= \u_\varepsilon(t)-\u_{\varepsilon_0}(t), \ \ t\in[0,T].$$ 	Clearly $\w(\cdot)$ satisfies
		\begin{equation}\label{cSCBF_w}
		\left\{
		\begin{aligned}
		\d\w+\{\mu \A\w+\B(\w+\u_{\varepsilon_0})-\B(\u_{\varepsilon_0})+\beta \mathcal{C}(\w+\u_{\varepsilon_0})-\beta\mathcal{C}(\u_{\varepsilon_0})\}\d t&= \varepsilon^*\d\text{W}(t),  \\ 
		\w(-t_0)&=0,
		\end{aligned}
		\right.
		\end{equation}
		in $\V'$ for all $t\in[-t_0,T]$, where $\e^*=\e-\e_0$. 	Let us introduce $\varrho(\cdot)=\w(\cdot)-\varepsilon^*\z(\cdot)$, where $\z(\cdot)$ is the unique solution of \eqref{OUPe}. Then $\varrho(\cdot)$ satisfies the following equation in $\V'$:
		\begin{equation}\label{cscbf_rho2}
		\left\{
		\begin{aligned}
		\frac{\d\varrho}{\d t} &= -\mu \A\varrho - \B(\varrho,\varrho)-\varepsilon^*\B(\varrho,\z)-\B(\varrho,\u_{\varepsilon_0})-\varepsilon^*\B(\z,\varrho)-(\varepsilon^*)^2\B(\z,\z)-\varepsilon^*(\z,\u_{\varepsilon_0})\\&\quad-\B(\u_{\varepsilon_0},\varrho)-\varepsilon^*\B(\u_{\varepsilon_0},\z) - \beta \mathcal{C}(\varrho + \varepsilon^*\z +\u_{\varepsilon_0})+\beta\mathcal{C}(\u_{\varepsilon_0}) +\varepsilon^* \alpha \z, \\
		\varrho(-t_0)&= -\varepsilon^* \z(-t_0).
		\end{aligned}
		\right.
		\end{equation}
		The above system is similar to \eqref{cscbf_eta2} and a calculation similar to \eqref{usc6} yields 
		\begin{align}\label{usc14}
		\|\varrho(t)\|^2_{\H}\leq\|\varrho(-t_0)\|^2_{\H}+C\int_{-t_0}^{t}\textbf{p}_{\varepsilon}(s)\|\varrho(s)\|^2_{\H}\d s+\varepsilon^* C\int_{-t_0}^{t}\textbf{q}_{\varepsilon}(s)\d s,  \text{ for }t\in[-t_0,T],
		\end{align}
		where
		\begin{align*}
		\textbf{p}_{\varepsilon}=\ &(\varepsilon^*)^2\|\z\|^2_{\V}+\|\u_{\varepsilon_0}\|^2_{\V}\\
		\textbf{q}_{\varepsilon}=\ &\varepsilon^* \big\{\|\z\|^2_{\V}+2\|\u_{\varepsilon_0}\|^2_{\V}+\alpha^2\big\}\|\z\|^2_{\V}+\beta  \|\u_\varepsilon\|^{r-1}_{\H}\|\u_{\varepsilon}\|^2_{\V}+ \beta \|\u_{\varepsilon_0}\|^{r-1}_{\H}\|\u_{\varepsilon_0}\|^2_{\V}+ \|\z\|^{r+1}_{\V},
		\end{align*}
		for a.e. $t\in[-t_0,T]$. Then applying the Gronwall inequality, we deduce that 
		\begin{align}\label{usc15}
		\|\varrho(t)\|^2_{\H}
		\leq&\bigg\{(\varepsilon^*)^2C\|\z(-t_0)\|^2_{\V}+\varepsilon^* C\int_{-t_0}^{t}\textbf{q}_{\varepsilon}(s)\d s\bigg\}e^{\int_{-t_0}^{t}\textbf{p}_{\varepsilon}(s)\d s}.
		\end{align}
		Since $\u_\varepsilon,\u_{\varepsilon_0}\in \mathrm{L}^{\infty}_{loc}([-t_0,\infty);\H)\cap\mathrm{L}^{2}_{loc}([-t_0,\infty);\V), \text{ and }\z\in\mathrm{L}^{\infty}_{\mathrm{loc}}([-t_0,\infty);\V)$,  therefore $\int_{-t_0}^{t}\textbf{p}_\varepsilon(s)\d s$ and $\int_{-t_0}^{t}\textbf{q}_\varepsilon(s)\d s$ both are finite. Hence, by \eqref{usc15}, we immediately have  $$\lim_{\varepsilon\to \varepsilon_0}\|\varrho(t)\|^2_{\H}=0,$$which completes the proof of \eqref{usc8} by taking $t=0$.	Since \eqref{K_3} holds true, \eqref{52} follows immediately. 
	\end{proof}
	\begin{remark}\label{rem5.2}
		The upper semicontinuity of random attractors for non-compact random dynamical systems with an application to a stochastic reaction-diffusion equation on the whole space is discussed in \cite{BW}. Under the Assumption \ref{assump}, one can obtain the upper semicontinuity property of the random attractors for the 2D SCBF equations \eqref{S-CBF} in Poincar\'e domains also (see \cite{KM}), by using Theorem 3.1, \cite{BW} and following similarly as in Theorem \ref{USC}. 
	\end{remark}
	\section{Random attractors for 2D SCBF equations in $\V$}\label{sec6}\setcounter{equation}{0}
	In this section, we shall prove the existence of random attractors in more regular space $\V$. To prove the existence of random attractors in $\V$, we shall prove that our RDS satisfies pullback flattening property in $\V$. Here, we denote by $\hat{\mathfrak{DK}},$ the class of all closed and bounded random sets $\D(\omega)$ on $\V$ such that the radius function $\Omega\ni \omega \mapsto \kappa(\D(\omega)):= \sup\{\|x\|_{\V}:x\in \D(\omega)\}$ belongs to the class $\mathfrak{K}.$
	
	We have discussed in subsection \ref{Operator} that there exists an orthogonal basis $\{e_k\}_{k=1}^{\infty}$ of $\H$ consisting of eigenfunctions of $\A$ corresponding to the eigenvalues $\{\lambda_k\}_{k=1}^{\infty}$. Therefore, using an orthonormalization process, it is easy to see that the space $\H$ possesses an orthonormal basis $\{\phi_k\}^{\infty}_{k=1}$ of eigenfunctions of the Stokes operator $\A$ such that $$\A\phi_k=\lambda_k\phi_k.$$ Let us denote by $\H_m = \mathrm{span} \{\phi_1, \phi_2,\cdots, \phi_m\}$ and let $\mathrm{P}_m:\H\to \H_m$ be the $\H$ orthogonal projection onto $\H_m$.
	
	Since the existence of random absorbing sets in $\D(\A^s),s>1/2$ is not available for the 2D SCBF equations with small additive noise  \eqref{S-CBF}, the compactness arguments cannot be used to obtain the existence of random attractors in   $\mathbb{V}$. Therefore, in order to prove the existence of random attractors in $\V$, we prove that the cocycle $\varphi_\varepsilon$ corresponding to the 2D SCBF equations \eqref{S-CBF} satisfies the pullback flattening property in $\V.$
	\begin{theorem}\label{flattning}
		Suppose that $\f\in\H$ and Assumption \ref{assump} holds. Then for any $\varepsilon\in(0,1],$ the cocycle $\varphi_\varepsilon$ corresponding to the 2D SCBF equations with small additive noise \eqref{S-CBF} satisfies the pullback flattening property in $\V.$
	\end{theorem}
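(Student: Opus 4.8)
The plan is to verify the two conditions of Definition \ref{flat} with the finite-dimensional subspace $\H_m=\mathrm{span}\{\phi_1,\dots,\phi_m\}$ and the $\H$-orthogonal projection $\mathrm{P}_m$; set $\mathrm{Q}_m:=\mathrm{I}-\mathrm{P}_m$ and split the solution of \eqref{cscbf} as $\v_\varepsilon=\mathrm{P}_m\v_\varepsilon+\v_{\varepsilon,2}$, where $\v_{\varepsilon,2}:=\mathrm{Q}_m\v_\varepsilon$ is the high-mode part. Condition (i) is immediate: by Theorem \ref{V_ab} the pullback orbit $\varphi_\varepsilon(t,\theta_{-t}\omega)B(\theta_{-t}\omega)$ sits inside the $\V$-absorbing ball for all $t\geq t_{\mathrm{D}}(\omega)+1$, and since $\mathrm{P}_m$ is a contraction on $\V$ its image stays bounded. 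All the effort goes into (ii). Since $\varphi_\varepsilon(t,\theta_{-t}\omega)\u_0=\v_\varepsilon(0)+\varepsilon\z(0)$, where $\v_\varepsilon$ solves \eqref{cscbf} on $[-t,0]$, and $\varepsilon\|\mathrm{Q}_m\z(0)\|_\V\to0$ as $m\to\infty$ for fixed $\omega$, it suffices to prove that $\|\v_{\varepsilon,2}(0)\|_\V$ can be made $<\delta$ uniformly over $\u_0\in B(\theta_{-t}\omega)$ for all large $t$.

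The key step is to project \eqref{cscbf} onto $\mathrm{Q}_m\H$ and take the $\H$-inner product with $\A\v_{\varepsilon,2}$. Because $\mathrm{P}_m$ commutes with $\A$, this produces
$$\frac{1}{2}\frac{\d}{\d t}\|\v_{\varepsilon,2}\|_\V^2+\mu\|\A\v_{\varepsilon,2}\|_\H^2=-\big(\B(\v_\varepsilon+\varepsilon\z),\A\v_{\varepsilon,2}\big)-\beta\big(\mathcal{C}(\v_\varepsilon+\varepsilon\z),\A\v_{\varepsilon,2}\big)+\varepsilon\alpha\big(\mathrm{Q}_m\z,\A\v_{\varepsilon,2}\big)+\big(\mathrm{Q}_m\f,\A\v_{\varepsilon,2}\big).$$
I would estimate the convective and damping terms exactly as in \eqref{V_ab2}--\eqref{V_ab3} using \eqref{b1} and Young's inequality, absorbing a total of $\tfrac{\mu}{2}\|\A\v_{\varepsilon,2}\|_\H^2$ into the left-hand side. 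For the two remaining terms I use that $\A\v_{\varepsilon,2}\perp\H_m$, so only the tails $\|\mathrm{Q}_m\z\|_\H$ and $\|\mathrm{Q}_m\f\|_\H$ survive. Invoking the high-mode spectral bound $\|\A\v_{\varepsilon,2}\|_\H^2\geq\lambda_{m+1}\|\v_{\varepsilon,2}\|_\V^2$ (the analogue of \eqref{poin_1} restricted to $\mathrm{Q}_m\D(\A)$), I arrive at a differential inequality
$$\frac{\d}{\d t}\|\v_{\varepsilon,2}\|_\V^2+\mu\lambda_{m+1}\|\v_{\varepsilon,2}\|_\V^2\leq G(t),$$
where $G$ collects the convective and damping remainders together with $C\varepsilon^2\|\mathrm{Q}_m\z\|_\H^2+C\|\mathrm{Q}_m\f\|_\H^2$.

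I then fix the window $[-1,0]$ and take the pullback time large, $s\leq-(t_{\mathrm{D}}(\omega)+1)$, so that the $\H$- and $\V$-absorbing estimates of Theorems \ref{H_ab} and \ref{V_ab} bound $\sup_{[-1,0]}\|\v_\varepsilon\|_\V$, $\sup_{[-1,0]}\|\v_\varepsilon\|_\H$ and $\int_{-1}^0\|\A\v_\varepsilon\|_\H^2\,\d t$ by constants depending only on $\omega$; with $\z\in\mathrm{L}^\infty(-1,0;\mathrm{X})$ from \eqref{O-U_conti}, this makes $G\in\mathrm{L}^1(-1,0)$ with an $\omega$-bound uniform in such $s$. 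Gronwall on $[-1,0]$ gives
$$\|\v_{\varepsilon,2}(0)\|_\V^2\leq\|\v_{\varepsilon,2}(-1)\|_\V^2\,e^{-\mu\lambda_{m+1}}+\int_{-1}^0e^{\mu\lambda_{m+1}t}G(t)\,\d t.$$
Taking $T_0=t_{\mathrm{D}}(\omega)+1$, $\mathrm{X}_\delta=\H_m$ and letting $m\to\infty$: the first term is dominated by $\kappa(\omega)^2e^{-\mu\lambda_{m+1}}\to0$; the noise and forcing contributions are at most $(\mu\lambda_{m+1})^{-1}\big(\sup_{[-1,0]}\|\mathrm{Q}_m\z\|_\H^2+\|\mathrm{Q}_m\f\|_\H^2\big)\to0$; and the convective/damping part tends to $0$ by dominated convergence, since its integrand $e^{\mu\lambda_{m+1}t}G(t)$ vanishes pointwise for $t<0$ and is dominated by the fixed $\mathrm{L}^1(-1,0)$ function $G$. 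Choosing $m$ large enough forces the right-hand side below $\delta^2$ uniformly in $\u_0\in B(\theta_{-t}\omega)$ and $t\geq T_0$, which is precisely (ii).

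I expect the main obstacle to be the convective term. Since the left-hand dissipation controls only the high-mode Laplacian $\|\A\v_{\varepsilon,2}\|_\H$, applying \eqref{b1} to $b(\v_\varepsilon+\varepsilon\z,\v_\varepsilon+\varepsilon\z,\A\v_{\varepsilon,2})$ leaves a factor of the \emph{full} $\|\A(\v_\varepsilon+\varepsilon\z)\|_\H$, which is only square-integrable—not bounded—on $[-1,0]$. Consequently $G$ is merely $\mathrm{L}^1$ in time, so the naive $(\mu\lambda_{m+1})^{-1}$ smallness is unavailable for that term, and it is the dominated-convergence argument against the fixed $\mathrm{L}^1$ majorant that recovers the decay as $m\to\infty$. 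A secondary technical point is ensuring the absorbing bounds hold uniformly on the whole window $[-1,0]$, not merely at $t=0$, for every pullback time $s\leq-T_0$; this follows by running the uniform Gronwall estimate behind Theorem \ref{V_ab} on shifted unit windows.
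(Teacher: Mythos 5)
Your proposal follows essentially the same route as the paper: the same spectral decomposition $\v_\varepsilon=\mathrm{P}_m\v_\varepsilon+\mathrm{Q}_m\v_\varepsilon$, the same energy estimate against $\A\v_{\varepsilon,2}$ using \eqref{b1} together with the bound $\|\A\mathrm{Q}_m\psi\|_{\H}^2\geq\lambda_{m+1}\|\mathrm{Q}_m\psi\|_{\V}^2$, and the same use of the absorbing estimates of Theorems \ref{H_ab} and \ref{V_ab} on the window $[-1,0]$ to control the coefficients. Your dominated-convergence treatment of the inhomogeneous term $\int_{-1}^{0}e^{\mu\lambda_{m+1}t}G(t)\,\d t$ and your explicit handling of the tail $\varepsilon\mathrm{Q}_m\z(0)$ are, if anything, slightly more careful than the paper's appeal to the uniform Gronwall lemma, which compresses the conclusion into the single bound $\kappa_{19}(\omega)e^{-\mu\lambda_{m+1}}$.
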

	\begin{proof}
		For given $s\leq0$, let $\hat{\B}=\{\B(\omega):\omega\in \Omega\}\in \hat{\mathfrak{DK}}, \ \delta>0,$ and  $$\v_\varepsilon(t) = \v_\varepsilon\big(t, s; \omega, \u_0 - \z(s)\big) = \varphi_\varepsilon(t-s; \theta_s \omega)\u_0 - \z(t)=\v_{\varepsilon,1}(t)+\v_{\varepsilon,2}(t),$$
		where $\u_0\in \B(\theta_s\omega),$ $\v_{\varepsilon,1}(t)=\mathrm{P}_m\v_\varepsilon(t), \v_{\varepsilon,2}(t)= \v_{\varepsilon}(t)-\mathrm{P}_m\v_\varepsilon(t)=\Q_m\v_\varepsilon(t).$
		Remember that for $\psi\in\D(\A)$, we have  \begin{align*}\mathrm{P}_m\psi&=\sum_{j=1}^{m}(\psi,\phi_j)\phi_j,\  \A\mathrm{P}_m\psi=\sum_{j=1}^{m}\lambda_j(\psi,\phi_j)\phi_j,\\ \mathrm{Q}_m\psi&=\sum_{j=m+1}^{\infty}(\psi,\phi_j)\phi_j,\ \A\mathrm{Q}_m\psi=\sum_{j=m+1}^{\infty}\lambda_j(\psi,\phi_j)\phi_j, \\
		\|\A\mathrm{Q}_m\psi\|_{\H}^2&=\sum_{j=m+1}^{\infty}\lambda_j^2|(\psi,\phi_j)|^2\geq \lambda_{m+1}\sum_{j=m+1}^{\infty}\lambda_j|(\psi,\phi_j)|^2=\lambda_{m+1}\|\Q_m\psi\|_{\V}^2,
		\end{align*}
		and 
		\begin{align*}
		\|\A\mathrm{P}_m\psi\|_{\H}^2=\sum_{j=1}^{m}\lambda_j^2|(\psi,\phi_j)|^2\leq \lambda_{m}\sum_{j=1}^{m}\lambda_j|(\psi,\phi_j)|^2=\lambda_{m}\|\P_m\psi\|_{\V}^2.
		\end{align*}
		That is, we get \begin{align}\label{4.9}\|\A\Q_m\psi\|_{\H}\geq \sqrt{\lambda_{m+1}}\|\Q_m\psi\|_{\V}\ \text{ and }\ \|\A\P_m\psi\|_{\H}\leq \sqrt{\lambda_m}\|\P_m\psi\|_{\V}.\end{align}
		Taking the inner product of the first equation of \eqref{cscbf} with $\A\v_{\varepsilon,2}(\cdot)$, we get
		\begin{align}\label{V_ab1_2}
		\frac{1}{2}\frac{\d}{\d t} \|\v_{\varepsilon,2}(t)\|^2_{\V}=&-\mu \|\A\v_{\varepsilon,2}(t)\|^2_{\H} -b(\v_\varepsilon(t)+\varepsilon\z(t),\v_\varepsilon(t)+\varepsilon\z(t),\A\v_{\varepsilon,2}(t))\nonumber\\&-\beta(\mathcal{C}(\v_\varepsilon(t)+\varepsilon\z(t)), \A\v_{\varepsilon,2}(t)) + \varepsilon\alpha(\z(t), \A\v_{\varepsilon,2}(t))+(\f,\A\v_{\varepsilon,2}(t)).
		\end{align}
		Using $0<\varepsilon\leq1$, \eqref{b1}, H\"older's and Young's inequalities, we obtain 
		\begin{align}
		\big|b(\v_\varepsilon+\varepsilon\z,\v_\varepsilon+\varepsilon\z,\A\v_{\varepsilon,2})\big|&\leq C \|\v_\varepsilon+\varepsilon\z\|^{\frac{1}{2}}_{\H}\|\v_\varepsilon+\varepsilon\z\|_{\V} \|\A\v_\varepsilon+\varepsilon\A\z\|^{\frac{1}{2}}_{\H}\|\A\v_{\varepsilon,2}\|_{\H} \nonumber\\
		&\leq\frac{\mu}{8}\|\A\v_{\varepsilon,2}\|^2_{\H} + C\|\v_\varepsilon+\varepsilon\z\|_{\H}\|\v_\varepsilon+\varepsilon\z\|^2_{\V}\|\A\v_\varepsilon+\varepsilon\A\z\|_{\H}\nonumber\\
		&\leq\frac{\mu}{8}\|\A\v_{\varepsilon,2}\|^2_{\H} + C\|\v_\varepsilon+\varepsilon\z\|^2_{\H}\|\v_\varepsilon+\varepsilon\z\|^4_{\V}+C\|\A\v_\varepsilon\|^2_{\H}+ C\|\A\z\|^2_{\H}\label{V_ab2_2},\\
		\big|(\mathcal{C}\big(\v_\varepsilon+\varepsilon\z\big),\A\v_{\varepsilon,2})\big|&\leq \|\v_\varepsilon+\varepsilon\z\|^{r}_{\wi \L^{2r}}\|\A\v_{\varepsilon,2}\|_{\H}\leq \frac{\mu}{8}\|\A\v_{\varepsilon,2}\|^2_{\H} + C\|\v_\varepsilon+\varepsilon\z\|^{2r}_{\wi \L^{2r}}\nonumber\\&\leq \frac{\mu}{8}\|\A\v_{\varepsilon,2}\|^2_{\H} + C\|\v_\varepsilon+\varepsilon\z\|^{2r-2}_{\H}\|\v_\varepsilon+\varepsilon\z\|^2_{\V},\label{V_ab3_2}\\
		\varepsilon\alpha\big(\z,\A\v_{\varepsilon,2}\big)&\leq \alpha \|\z\|_{\H}\|\A\v_{\varepsilon,2}\|_{\H}	\leq\frac{\mu}{8}\|\A\v_{\varepsilon,2}\|^2_{\H} + C\|\z\|^2_{\H}\leq\frac{\mu}{8}\|\A\v_{\varepsilon,2}\|^2_{\H} + C\|\z\|^2_{\V},\label{V_ab4_2}\\
		\big(\f, \A\v_{\varepsilon,2}\big)&\leq \|\f\|_{\H}\|\A\v_{\varepsilon,2}\|_{\H}\leq\frac{\mu}{8}\|\A\v_{\varepsilon,2}\|^2_{\H} + C\|\f\|^2_{\H}.\label{V_ab5_2}
		\end{align}
		Using the inequalities \eqref{V_ab2_2}-\eqref{V_ab5_2} in \eqref{V_ab1_2} and then using \eqref{4.9}, we deduce that for any $\varepsilon\in(0,1]$ and for any $\omega\in\Omega$,
		\begin{align*}
		\frac{\d}{\d t}\|\v_{\varepsilon,2}(t)\|^2_{\V} +\mu\lambda_{m+1}\|\v_{\varepsilon,2}(t)\|^2_{\V} 
		\leq&\ C\|\v_\varepsilon(t)+\varepsilon\z(t)\|^2_{\H}\|\v_\varepsilon(t)+\varepsilon\z(t)\|^4_{\V} +C\|\A\v_\varepsilon(t)\|^2_{\H} \nonumber\\&+ C\|\A\z(t)\|^2_{\H}+C\|\v_\varepsilon(t)+\varepsilon\z(t)\|^{2 r-2}_{\H}\|\v_\varepsilon(t)+\varepsilon\z(t)\|^2_{\V}\nonumber\\&+C\|\z(t)\|^2_{\V}+C\|\f\|^2_{\H}.
		\end{align*}
		Thus, it is immediate that 
		\begin{align}\label{V_ab_7_2}
		\frac{\d}{\d t}\left[e^{\mu\lambda_{m+1} t}\|\v_{\varepsilon,2}(t)\|^2_{\V} \right]
		\leq&\ \bigg[C\|\v_\varepsilon(t)+\varepsilon\z(t)\|^2_{\H}\|\v_\varepsilon(t)+\varepsilon\z(t)\|^4_{\V} +C\|\A\v_\varepsilon(t)\|^2_{\H} \nonumber\\&+ C\|\A\z(t)\|^2_{\H}+C\|\v_\varepsilon(t)+\varepsilon\z(t)\|^{2 r-2}_{\H}\|\v_\varepsilon(t)+\varepsilon\z(t)\|^2_{\V}\nonumber\\&+C\|\z(t)\|^2_{\V}+C\|\f\|^2_{\H}\bigg]e^{\mu\lambda_{m+1} t}.
		\end{align}
		From Theorems \ref{H_ab} and \ref{V_ab}, we get for any $\varepsilon\in(0,1]$ and for any $t\in[-1,0]$, there exists $\kappa_{17}(\omega)\geq0$ and $\kappa_{18}(\omega)\geq0$ such that
		\begin{align}
		\|\v_\varepsilon(t,s;\omega,\u_0-\z(s))\|_{\H}\leq\kappa_{17}(\omega) \quad\text{and}\quad
		\|\v_\varepsilon(t,s;\omega,\u_0-\z(s))\|_{\V}\leq\kappa_{18}(\omega),\label{ab}
		\end{align}
		for any $s\leq-(t_{\mathrm{D}}(\omega)+2).$
		Hence by an  application of the uniform Gronwall Lemma, using \eqref{V_ab_12} and \eqref{ab} in \eqref{V_ab_7_2}, we deduce that for any $\varepsilon\in(0,1]$ and for any $\omega\in\Omega,$ there exists $\kappa_{19}(\omega)\geq0$  such that 
		\begin{align}
		\|\v_{\varepsilon,2}(0,\omega; s, \u_0-\z(s))\|_{\V}\leq \kappa_{19}(\omega)e^{-\mu\lambda_{m+1}},
		\end{align}
		for any $s\leq-\big(t_{\mathrm{D}}(\omega)+2\big).$
		Therefore, for sufficiently large $m$, we get
		$$\|\Q_m\v_\varepsilon(0,\omega; s, \u_0-\z(s))\|^2_{\V}\leq\delta,$$
		for any $\delta>0, \omega\in \Omega$ and any $s\leq-\big(t_{\mathrm{D}}(\omega)+2\big).$
	\end{proof}
	
	From Theorems \ref{H_ab}, \ref{V_ab}, \ref{flattning} and \ref{flat_T}, we immediately conclude the following result:
	\begin{theorem}\label{Main_Theoem_V}
		Suppose that $\f\in\H$ and Assumption \ref{assump} holds. Then for any $\varepsilon\in(0,1],$ the cocycle $\varphi_\varepsilon$ corresponding to 2D SCBF equations with small additive noise \eqref{S-CBF} has a $\hat{\mathfrak{DK}}$-random attractor $\mathcal{G}_\varepsilon = \{\textbf{G}_\varepsilon(\omega): \omega\in \Omega\}$ in $\V$. 
	\end{theorem}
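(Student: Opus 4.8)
The plan is to deduce the statement directly from the abstract pullback-flattening criterion of \cite{KL}, recorded here as Theorem \ref{flat_T}, which asserts that any RDS $\theta$-cocycle possessing a random bounded absorbing set and enjoying the pullback flattening property admits a unique random attractor. Thus the whole argument reduces to verifying these two hypotheses for the cocycle $\varphi_\varepsilon$ in the regular space $\V$, both of which have already been prepared in the preceding results, so that the conclusion follows ``for free'' once the pieces are assembled.

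First I would invoke Theorem \ref{V_ab}: for $\f\in\H$ and under Assumption \ref{assump}, there is a family $\hat{\mathrm{B}}_1=\{\mathrm{B}_1(\omega):\omega\in\Omega\}$ of random absorbing sets in $\V$ whose radius function lies in the class $\mathfrak{K}$. In other words $\hat{\mathrm{B}}_1\in\hat{\mathfrak{DK}}$ and it absorbs every $\hat{\mathfrak{DK}}$-bounded set, so $\varphi_\varepsilon$ possesses a random bounded absorbing set in $\V$. Next I would invoke Theorem \ref{flattning}, which guarantees that for each $\varepsilon\in(0,1]$ the cocycle $\varphi_\varepsilon$ satisfies the pullback flattening property in $\V$; concretely, the bound $\|\Q_m\v_\varepsilon(0)\|_{\V}^2\leq\delta$ obtained there furnishes, for every $\delta>0$, the finite-dimensional splitting $\H_m$ and the bounded $\H$-orthogonal projection $\mathrm{P}_m$ required in Definition \ref{flat}, together with the boundedness of the $\mathrm{P}_m$-component that comes from the $\V$-absorption in Theorem \ref{V_ab}.

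With both hypotheses in hand, a single application of Theorem \ref{flat_T} yields the unique random attractor $\mathcal{G}_\varepsilon=\{\textbf{G}_\varepsilon(\omega):\omega\in\Omega\}$ for $\varphi_\varepsilon$ in $\V$, which is exactly the claim. The genuine difficulty of the whole program is the flattening estimate itself, which was discharged in Theorem \ref{flattning} (there the high-frequency tail is controlled by the spectral gap $\lambda_{m+1}$ via the uniform Gronwall lemma); for the present theorem the only point demanding care is the bookkeeping of the universe of attraction, namely confirming that the absorbing family of Theorem \ref{V_ab} and the flattening property of Theorem \ref{flattning} are both formulated with respect to the same class $\hat{\mathfrak{DK}}$ of closed bounded random sets in $\V$ whose radius belongs to $\mathfrak{K}$, so that Theorem \ref{flat_T} applies to that universe. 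Since the radius functions arising in the absorbing set are shown to lie in $\mathfrak{K}$ through Lemma \ref{Bddns5} and Proposition \ref{radius}, and the flattening estimate holds for arbitrary $\u_0\in\B(\theta_s\omega)$ with $\hat{\B}\in\hat{\mathfrak{DK}}$, this compatibility is automatic and no additional estimate is required.
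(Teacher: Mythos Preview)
Your proposal is correct and follows essentially the same route as the paper: the authors state that Theorem \ref{Main_Theoem_V} is an immediate consequence of Theorems \ref{H_ab}, \ref{V_ab}, \ref{flattning} and \ref{flat_T}, and you have simply spelled out this deduction in detail by invoking the absorbing set in $\V$ from Theorem \ref{V_ab}, the pullback flattening property from Theorem \ref{flattning}, and then applying the abstract criterion Theorem \ref{flat_T}.
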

	\section{Invariant measures}\label{sec7}\setcounter{equation}{0}
	In this section, we discuss about the existence of an invariant measure for the 2D SCBF equations \eqref{S-CBF}, which is a direct consequence of Corollary 4.4, \cite{CF} along with Theorems \ref{Main_Theoem_H} and  \ref{Main_Theoem_V}.
	
	Let $\varphi_{\e}$ be the RDS corresponding to the 2D SCBF equations \eqref{S-CBF}, which is defined by \eqref{combine_sol}. Let us define the transition operator $\mathrm{P}_t$ by \begin{align}\label{71}\mathrm{P}_t f(\x)=\int_{\Omega}f(\varphi_{\e}(t,\omega,\x))\d\mathbb{P}(\omega)=\E\left[f(\varphi_{\e}(t,\x))\right],\end{align}  for all $f\in\mathcal{B}_b(\H)$, where $\mathcal{B}_b(\H)$ is the space of all bounded and Borel measurable functions on $\H$. A proof similar to Proposition 3.8, \cite{BL} yields the following result: 
	\begin{lemma}
		The family $\{\mathrm{P}_t\}_{t\geq 0}$ is Feller, that is, $\mathrm{P}_tf\in\C_{b}(\H)$ if $f\in\C_b(\H)$, where $\C_b(\H)$ is the space of all bounded and continuous functions on $\H$. Furthermore, for any $f\in\C_b(\H)$, $\mathrm{P}_tf(\x)\to f(\x)$ as $t\downarrow 0$. 
	\end{lemma}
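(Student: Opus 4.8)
The plan is to deduce both assertions directly from the pathwise continuity of the cocycle $\varphi_\e$ already at our disposal, combined with the dominated convergence theorem, the domination being trivial since $f$ is bounded and $\mathbb{P}$ is a probability measure. No new energy estimate is required; the substantive analytic input, namely the continuity of the flow map, has already been dispatched in Theorem \ref{RDS_Conti1}.

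First I would treat the Feller property. Fix $t\geq 0$ and $f\in\C_b(\H)$. Boundedness of $\mathrm{P}_tf$ is immediate, since for every $\x\in\H$,
$$|\mathrm{P}_tf(\x)|=\left|\int_{\Omega}f(\varphi_\e(t,\omega,\x))\d\mathbb{P}(\omega)\right|\leq\sup_{\y\in\H}|f(\y)|.$$
For continuity I would take $\x_n\to\x$ in $\H$ and invoke Theorem \ref{RDS_Conti1}, which ensures that for each fixed $\omega\in\Omega$ the map $\varphi_\e(t,\omega,\cdot):\H\to\H$ is continuous; hence $\varphi_\e(t,\omega,\x_n)\to\varphi_\e(t,\omega,\x)$ in $\H$ for every $\omega$. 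Since $f$ is continuous, $f(\varphi_\e(t,\omega,\x_n))\to f(\varphi_\e(t,\omega,\x))$ pointwise in $\omega$, and since these integrands are uniformly bounded by $\sup_{\y\in\H}|f(\y)|$, dominated convergence gives $\mathrm{P}_tf(\x_n)\to\mathrm{P}_tf(\x)$, so that $\mathrm{P}_tf\in\C_b(\H)$.

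Next I would establish the continuity at $t=0$. Fix $\x\in\H$ and $f\in\C_b(\H)$. The key observation is that, for each $\omega$, the trajectory $t\mapsto\varphi_\e(t,\omega,\x)$ lies in $\mathrm{C}([0,T];\H)$ with $\varphi_\e(0,\omega,\x)=\x$ (this follows from \eqref{combine_sol} together with the path regularity in Definition \ref{def3.2}, noting that the Ornstein-Uhlenbeck contributions $\z(t)$ and $\z(0)$ cancel at $t=0$). Consequently $\varphi_\e(t,\omega,\x)\to\x$ in $\H$ as $t\downarrow 0$ for every $\omega$, whence $f(\varphi_\e(t,\omega,\x))\to f(\x)$ pointwise. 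A second application of dominated convergence, again with the constant bound $\sup_{\y\in\H}|f(\y)|$, yields
$$\lim_{t\downarrow 0}\mathrm{P}_tf(\x)=\int_{\Omega}f(\x)\d\mathbb{P}(\omega)=f(\x).$$

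The only point requiring care, rather than a genuine obstacle, is the measurability of $\omega\mapsto f(\varphi_\e(t,\omega,\x))$ guaranteeing that the integrals defining $\mathrm{P}_tf$ make sense; this is supplied by the joint $(\mathcal{B}(\R^+)\otimes\mathscr{F}\otimes\mathcal{B},\mathcal{B})$-measurability of $\varphi_\e$ from the definition of an RDS composed with the Borel measurability of $f$. Since everything reduces to the continuity results already proved and a standard limit passage, the argument proceeds exactly along the lines of Proposition 3.8 in \cite{BL}.
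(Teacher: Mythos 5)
Your proposal is correct and follows essentially the same route as the paper, which simply defers to Proposition 3.8 of \cite{BL}: the Feller property from the continuity of $\u_0\mapsto\varphi_\e(t,\omega,\u_0)$ (Theorem \ref{RDS_Conti1} with $\z_n=\z$, $\f_n=\f$ fixed) plus dominated convergence, and the strong continuity at $t=0$ from the $\mathrm{C}([0,T];\H)$ path regularity together with $\varphi_\e(0,\omega,\u_0)=\u_0$. Nothing further is needed.
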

	Using similar arguments as in the proof of Theorem 5.6, \cite{CF}, one can prove that $\varphi_{\e}$ is a Markov RDS, that is, $\mathrm{P}_{t+s}=\mathrm{P}_t\mathrm{P}_s$, for all $t,s\geq 0$. 
	Since, we know by Corollary 4.4, \cite{CF} that if a Markov RDS on a Polish space has an invariant compact random set, then there exists a Feller invariant probability measure $\nu_{\e}$ for $\varphi_{\e}$. 
	\begin{definition}
		A Borel probability measure $\nu$ on $\H$  is called an invariant measure	for a Markov semigroup $\{\mathrm{P}_t\}_{t\geq 0}$ of Feller operators on $\C_b(\H)$ if and only if $$\mathrm{P}_t^*\nu=\nu, \ t\geq 0,$$ where $(\mathrm{P}_t^*\nu)(\Gamma)=\int_{\mathbb{H}}\mathrm{P}_t(\x,\Gamma)\nu(\d\x)$ for $\Gamma\in\mathcal{B}(\H)$ and the $\mathrm{P}_t(\x,\cdot)$ is the transition probability, $\mathrm{P}_t(\x,\Gamma)=\mathrm{P}_t(\chi_{\Gamma})(\x),\ \x\in\H$.
	\end{definition}
	
	In Theorems \ref{Main_Theoem_H} and  \ref{Main_Theoem_V}, we have proved the existence of random attractors in $\H$ and in $\V$, respectively. By the definition of random attractors, it is immediate  that there exists an invariant compact random set in $\H$ as well as in $\V$. A Feller invariant probability measure for a Markov RDS $\varphi$ on $\H$ is, by definition, an invariant probability measure for the semigroup $\{\mathrm{P}_t\}_{t\geq 0}$ defined by \eqref{71}. Hence, we have the following result on the existence of invariant measures for the 2D SCBF equations \eqref{S-CBF}.
	\begin{theorem}\label{thm7.3}
		There exists an invariant measure for the 2D SCBF equations \eqref{S-CBF} in $\H$.
	\end{theorem}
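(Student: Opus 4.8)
The plan is to obtain the invariant measure as a direct consequence of Corollary 4.4, \cite{CF}, which asserts that a Markov RDS on a Polish space possessing an invariant compact random set admits a Feller invariant probability measure. Accordingly, I would first note that the phase space $\H$, being a separable Hilbert space, is a Polish space, so that the abstract framework of \cite{CF} applies to our cocycle $\varphi_{\e}$.

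Next I would assemble the two structural hypotheses that Corollary 4.4 requires. The Markov property of the family $\{\mathrm{P}_t\}_{t\geq 0}$ defined in \eqref{71}, namely $\mathrm{P}_{t+s}=\mathrm{P}_t\mathrm{P}_s$ for all $t,s\geq 0$, is obtained by arguing as in the proof of Theorem 5.6, \cite{CF}, using the cocycle property \eqref{Cocy..} of $\varphi_{\e}$ together with the independence and stationarity of the Wiener increments underlying the metric DS $(\Omega,\hat{\mathscr{F}},\hat{\mathbb{P}},\hat{\theta})$. The Feller property, $\mathrm{P}_t f\in\C_b(\H)$ whenever $f\in\C_b(\H)$, is precisely the content of the Lemma immediately preceding this theorem. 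Consequently $\varphi_{\e}$ is a Markov RDS of Feller operators on $\C_b(\H)$ in the sense of \cite{CF}.

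The crucial step is to produce an invariant compact random set. Here I would invoke Theorem \ref{Main_Theoem_H}, which guarantees that $\varphi_{\e}$ has a random attractor $\hat{\mathcal{A}}_{\varepsilon}=\{\mathbf{A}_{\varepsilon}(\omega):\omega\in\Omega\}$ in $\H$. By Definition \ref{RA}, the set $\hat{\mathcal{A}}_{\varepsilon}$ is a compact random set which is $\varphi$-invariant, that is, $\varphi_{\e}(t,\omega)\mathbf{A}_{\varepsilon}(\omega)=\mathbf{A}_{\varepsilon}(\theta_t\omega)$ holds $\hat{\mathbb{P}}$-almost surely for every $t\geq 0$. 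This is exactly an invariant compact random set for the Markov RDS $\varphi_{\e}$. (One could equally use the $\hat{\mathfrak{DK}}$-random attractor $\mathcal{G}_{\varepsilon}$ in $\V$ from Theorem \ref{Main_Theoem_V}, which remains compact in $\H$ since the embedding $\V\hookrightarrow\H$ is compact.)

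With both hypotheses verified, Corollary 4.4, \cite{CF} yields a Feller invariant probability measure $\nu_{\e}$ for $\varphi_{\e}$, which by definition is an invariant probability measure for the Markov semigroup $\{\mathrm{P}_t\}_{t\geq 0}$, i.e. $\mathrm{P}_t^*\nu_{\e}=\nu_{\e}$ for all $t\geq 0$. This $\nu_{\e}$ is the desired invariant measure for the 2D SCBF equations \eqref{S-CBF} in $\H$, completing the proof. Since each of the ingredients has already been established in the excerpt, there is no genuine analytic obstacle; the only point requiring care is to check that the $\varphi$-invariance of the attractor supplied by Definition \ref{RA} coincides with the notion of invariant compact random set demanded in \cite{CF}, and both reduce to the identity $\varphi_{\e}(t,\omega)\mathbf{A}_{\varepsilon}(\omega)=\mathbf{A}_{\varepsilon}(\theta_t\omega)$, so they match verbatim.
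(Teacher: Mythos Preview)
Your proposal is correct and follows essentially the same route as the paper: the paper's argument, given in the discussion preceding Theorem \ref{thm7.3}, likewise combines the Feller Lemma, the Markov property (via Theorem 5.6, \cite{CF}), the random attractor from Theorem \ref{Main_Theoem_H} as the required invariant compact random set, and then applies Corollary 4.4, \cite{CF}. Your parenthetical remark about using the $\V$-attractor from Theorem \ref{Main_Theoem_V} also matches the paper's subsequent Remark.
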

	
	\begin{remark}
		1.	In  Theorem \ref{Main_Theoem_V}, we have also proved that there exists a random attractor in $\V$ and hence there exists an invariant compact random set in $\V$. Invoking Corollary 4.4 in \cite{CF}, we obtain the existence of an invariant measure for the 2D SCBF equations \eqref{S-CBF} in $\V$ as well.
		
		2. The uniqueness of invariant measures for the  SCBF equations by using the exponential stability results has been established in Theorem 5.5, \cite{MTM}. 
	\end{remark}

	\medskip\noindent
	{\bf Acknowledgments:}    The first author would like to thank the Council of Scientific $\&$ Industrial Research (CSIR), India for financial assistance (File No. 09/143(0938)/2019-EMR-I).  M. T. Mohan would  like to thank the Department of Science and Technology (DST), Govt of India for Innovation in Science Pursuit for Inspired Research (INSPIRE) Faculty Award (IFA17-MA110).

\end{document}